\numberwithin{equation}{section}
\newcommand*{\N}{\mathbb N}
\newcommand*{\R}{\mathbb R}
\newtheorem{theorem}{Theorem}[section]
\newtheorem{lemma}[theorem]{Lemma}
\newtheorem{proposition}[theorem]{Proposition}
\newtheorem{corollary}[theorem]{Corollary}
\newtheorem{remark}[theorem]{Remark}
\let\originalleft\left
\let\originalright\right
\renewcommand{\left}{\mathopen{}\mathclose\bgroup\originalleft}
\renewcommand{\right}{\aftergroup\egroup\originalright}
\newlength{\leftstackrelawd}
\newlength{\leftstackrelbwd}
\def\leftstackrel#1#2{\settowidth{\leftstackrelawd}%
	{${{}^{#1}}$}\settowidth{\leftstackrelbwd}{$#2$}%
	\addtolength{\leftstackrelawd}{-\leftstackrelbwd}%
	\leavevmode\ifthenelse{\lengthtest{\leftstackrelawd>0pt}}%
	{\kern-.5\leftstackrelawd}{}\mathrel{\mathop{#2}\limits^{#1}}}
\newcommand{\brc}[1]{  \left\{#1\right\} } 
\newcommand{\norm}[1]{  \left\|#1\right\| } 
\newcommand{\pare}[1]{\left(#1\right)}    
\newcommand{\cor}[1]{  \left[#1\right] }      
\newcommand{\abs}[1]{  \left\vert#1\right\vert }     
\newcommand{\bb}[1]{\mathbb{#1}}
\def\mathcolor#1#{\@mathcolor{#1}}
\def\@mathcolor#1#2#3{%
	\protect\leavevmode
	\begingroup
	\color#1{#2}#3%
	\endgroup
}
\def\11{\textbbm{1}}
\def\vint_#1{\mathchoice%
          {\mathop{\kern 0.2em\vrule width 0.6em height 0.69678ex depth -0.58065ex
                  \kern -0.8em \intop}\nolimits_{\kern -0.4em#1}}%
          {\mathop{\kern 0.1em\vrule width 0.5em height 0.69678ex depth -0.60387ex
                  \kern -0.6em \intop}\nolimits_{#1}}%
          {\mathop{\kern 0.1em\vrule width 0.5em height 0.69678ex
              depth -0.60387ex
                  \kern -0.6em \intop}\nolimits_{#1}}%
          {\mathop{\kern 0.1em\vrule width 0.5em height 0.69678ex depth -0.60387ex
                  \kern -0.6em \intop}\nolimits_{#1}}}
\def\vintslides_#1{\mathchoice%
          {\mathop{\kern 0.1em\vrule width 0.5em height 0.697ex depth -0.581ex
                  \kern -0.6em \intop}\nolimits_{\kern -0.4em#1}}%
          {\mathop{\kern 0.1em\vrule width 0.3em height 0.697ex depth -0.604ex
                  \kern -0.4em \intop}\nolimits_{#1}}%
          {\mathop{\kern 0.1em\vrule width 0.3em height 0.697ex depth -0.604ex
                  \kern -0.4em \intop}\nolimits_{#1}}%
          {\mathop{\kern 0.1em\vrule width 0.3em height 0.697ex depth -0.604ex
                  \kern -0.4em \intop}\nolimits_{#1}}}
\newcommand{\aveint}[2]{\mathchoice%
          {\mathop{\kern 0.2em\vrule width 0.6em height 0.69678ex depth -0.58065ex
                  \kern -0.8em \intop}\nolimits_{\kern -0.45em#1}^{#2}}%
          {\mathop{\kern 0.1em\vrule width 0.5em height 0.69678ex depth -0.60387ex
                  \kern -0.6em \intop}\nolimits_{#1}^{#2}}%
          {\mathop{\kern 0.1em\vrule width 0.5em height 0.69678ex depth -0.60387ex
                  \kern -0.6em \intop}\nolimits_{#1}^{#2}}%
          {\mathop{\kern 0.1em\vrule width 0.5em height 0.69678ex depth -0.60387ex
                  \kern -0.6em \intop}\nolimits_{#1}^{#2}}}
\def\nkj{n_{k_j}}
\def\nk{n_{k}}
\begin{document}
	
	\title{\bf Homogenization for nonlocal evolution problems with three different smooth kernels}
	
	\author{Monia Capanna, Jean C. Nakasato, Marcone C. Pereira and
		Julio D. Rossi}
		
		\date{}
	
	\maketitle

			\date{\today}

% 	60J60  	Diffusion processes
% 45K05  	Integro-partial differential equations
% 35K15  	Initial value problems for second-order parabolic equations
%35B27  	Homogenization; equations in media with periodic structure
% 35R11, %Fractional partial differential equations
%35S10, %Initial value problems for PsDO
%35B65, %Smoothness and regularity of solutions
%35K55, %Nonlinear PDE of parabolic type
%35B40, Asymptotic behavior of solutions
%26A33, %Fractional derivatives and integrals
%35A05. %General existence and uniqueness theorems
%35K65, %Parabolic partial differential equations of degenerate type

	\begin{abstract} 
		In this paper we consider the homogenization of the evolution problem associated 
		with a jump process that involves three different smooth kernels that govern the jumps to/from different parts of the domain. 
		We assume that the spacial domain 
		is divided into a sequence of two subdomains $A_n \cup B_n$ and we have three different smooth
		kernels, one that controls the jumps from $A_n$ to $A_n$, a second one that controls
		the jumps from $B_n$ to $B_n$ and the third one that governs the interactions between $A_n$ and $B_n$.
		
		Assuming that $\chi_{A_n} (x) \to X(x)$ weakly in $L^\infty$ (and then $\chi_{B_n} (x) \to 1-X(x)$ weakly in $L^\infty$)
		as $n \to \infty$ and that the initial condition is given by a density $u_0$ in $L^2$ 
		we show that there is an homogenized limit system in which the three kernels and the limit function $X$ appear.
		 When the initial condition is a delta at one point, $\delta_{\bar{x}}$ (this corresponds to the process that starts at $\bar{x}$) we show that
		 there is convergence along subsequences such that $\bar{x} \in A_{n_j}$ or $\bar{x} \in B_{n_j}$ for every $n_j$
		 large enough. 
		 
		 We also provide
		a probabilistic interpretation of this evolution equation in terms of a stochastic process that
		describes the movement of a particle that jumps in $\Omega$ according to the three different kernels and show 
		that the underlying process converges in distribution to a limit process associated with the limit equation.  

		We focus our analysis in Neumann type boundary conditions and briefly describe at the end
		how to deal with Dirichlet boundary conditions. 	
		\end{abstract}
	
	{Keywords: heterogeneous media, homogenization, nonlocal equations, Neumann problem, Dirichlet problem.\\
		\indent 2020 {\it Mathematics Subject Classification.} 45K05, 35B27, 35B40.}

	\section{Introduction.}
	\label{Sect.intro}
	
	\setcounter{equation}{0}

	Our main goal in this paper is to study the homogenization that occurs when one deals with nonlocal
	evolution problems with 
	different non-singular kernels that act in different domains.
	This paper is a natural continuation of \cite{nosotros} where the stationary case was
	studied.

	Consider a partition of the ambient space $\overline{\Omega}$ (a bounded domain in $\mathbb{R}^N$) 
	into two subdomains $A$, $B$, and consider a nonlocal problem
	in which we have three different smooth kernels. One (that we call $J$) that measures the probability of jumping
	from $A$ to $A$ ($J(x,y)$ is the probability that a particle that is at $x\in A$ moves to $y\in A$), 
	another one ($G$) that is involved in jumps from $B$ to $B$ and a third one ($R$)
	that gives the interactions between $A$ and $B$. 
	Remark that the involved kernels can be of convolution type, that is, we could have
	for instance, $J(x,y) = J(x-y)$ (this special form of the kernels is often used in applications). 
	However, we only use in our arguments that the kernels $V =J$, $G$ and $R$ 
	are non-singular functions which satisfy the following hypotheses that will be assumed from now on
	$$
	{\bf (H)} \qquad 
	\begin{gathered}
	V \in \mathcal{C}(\R^N\times \R^N,\R) \textrm{ is non-negative with } V(x,x)>0, \;  \textrm{symmetric, } V(x,y) = V(y,x) \textrm{ for every $x,y \in \R^N$, and } \\
	\int_{\R^N} V(x,y) \, dy = 1.
	\end{gathered}
	$$

We take a sequence of partitions
$A_n$, $B_n$ of the fixed ambient space $\overline{\Omega}$ such that $\overline{\Omega} = A_n \cup B_n$, $A_n\cap B_n= \emptyset$,
$B_n$ is open, has a Lipchitz boundary (consequently $\abs{\partial B_n \cap \Omega}=\abs{\partial A_n \cap \Omega}=0$) and
\begin{equation} \label{cond.sets}
\begin{array}{l}
\bullet \  \chi_{A_n} (x) \rightharpoonup X(x), \qquad \mbox{ weakly in } L^\infty (\overline\Omega), 
 \\[10pt]
\bullet  \ \chi_{B_n} (x) \rightharpoonup 1-X(x) \qquad \mbox{ weakly in } L^\infty (\overline\Omega), \\[10pt]
\qquad \mbox{with } 0< X(x) <1.
\end{array}
\end{equation}

Associated to this sequence of partitions we consider the diffusion process that we describe next.
We want to analyze the evolution of a particle that moves in $ \overline\Omega$. To do that we introduce three 
families $\brc{E^1_k}_{k\in \bb N}$, $\brc{E^2_k}_{k\in \bb N}$ and $\brc{E^3_k}_{k\in \bb N}$ of independent 
random variables with exponential distribution of parameter $\frac{1}{3}$. Define
\begin{align*}
\Upsilon_k:=\min_{i\in\brc{1,2,3}}\brc{E_k^i},\qquad\forall k\in\mathbb N.
\end{align*} 
The set $\brc{\Upsilon_k}_{k\in\mathbb N}$ is a family of independent random variables distributed as an exponential of parameter $1$.
Fixing $\tau_0=0$, we define recursively the random times
\begin{align}\label{times}
\tau_k=\tau_{k-1}+\Upsilon_k, \qquad \forall k\in \mathbb N.
\end{align}
We denote by $Y_n\pare{t}$ the position of the particle at time $t$.
The evolution of the particle is described as follows: At the times $\{\tau_k\}$ the particle chooses a site $y\in {\overline\Omega}$ according to the kernels $J$, $R$ or $G$. The jumps from a site in $A_n$ to another site in $A_n$ are ruled by $J$,  the jumps between $A_n$ and $B_n$ (or vice versa) are ruled by $R$, the jumps from a site in $B_n$ to a site in $B_n$ are ruled by $G$. More precisely, if $\Upsilon_k=E_k^1$ the particle chooses a site $y\in \overline\Omega$ according to $J\pare{Y_n(\tau_{k-1}),y}$ and it jumps on it only if $Y_n(\tau_{k-1})\in A_n$ and $y\in A_n$ otherwise the particle remains in its current position. If $\Upsilon_k=E_k^2$ the particle chooses a site $y\in \overline\Omega$ according to the kernel $R\pare{Y_n(\tau_{k-1}),y}$ and it jumps on it only if $Y_n(\tau_{k-1})\in A_n$ and $y\in B_n$ (or if $Y_n(\tau_{k-1})\in B_n$ and $y \in A_n$). Finally, if $\Upsilon_k=E_k^3$ the particle chooses a site $y\in \overline\Omega$ according to $G\pare{Y_n(\tau_{k-1}),y}$ and it jumps on it only if $Y_n(\tau_{k-1})\in B_n$ and $y\in B_n$.

The process $Y_n(t)$ is a Markov process whose generator ${L}_n$ is defined on functions $f:\overline{\Omega}
\to \mathbb{R}$ with $f|_{A_n}\in C\pare{A_n}$ and $f|_{B_n}\in C\pare{B_n}$ as
\begin{align}\label{gen.intro}
{L}_n f(x)=
&\chi_{A_n}\pare{x}\int_{\Omega}\chi_{A_n}\pare{y} J\pare{x,y}\pare{f\pare{y}-f\pare{x}}dy+\chi_{B_n}\pare{x}\int_{\Omega}\chi_{B_n}\pare{y} G\pare{x,y}\pare{f\pare{y}-f\pare{x}}dy\\[6pt]
&+\chi_{A_n}\pare{x}\int_\Omega \chi_{B_n}\pare{y}R\pare{x,y}\pare{f\pare{y}-f\pare{x}}dy+\chi_{B_n}\pare{x}\int_\Omega \chi_{A_n}\pare{y}R\pare{x,y}\pare{f\pare{y}-f\pare{x}}dy.
\end{align}

With an initial distribution of the position of the particle at time $t=0$, $u_0$, 
the associated evolution problem (whose solution is the density of the process $Y_n$, see 
Corollary \ref{corol.densidad}) reads as
\begin{equation}\label{evol.intro}
\left\{
\begin{array}{ll}
\displaystyle \frac{\partial u_n}{\partial t} (t,x) = {L}_n u_n(t,x), \qquad &  \, t>0, \, x\in \overline{\Omega}, \\[6pt]
u_n(0,x)=u_0(x), \qquad & x\in \overline\Omega.
\end{array}
\right.
\end{equation}

Notice that we have an evolution equation of Neumann type since the particle remains inside $\overline{\Omega}$
for every positive time (there are no particles entering or leaving the domain). For this evolution the total mass is preserved in time, that is,
$$
\int_\Omega u_n (t,x) \, dx = \int_\Omega u_0 (x) \, dx \qquad \forall t>0,
$$
as is expected for a Neumann problem. We will focus in this case, but at the end of this paper we
 will briefly comment on Dirichlet type problems (in this case the particle is allowed to jump outside $\Omega$ and is killed when doing so).

Our goal is to take the limit, as $n\to +\infty$, both in the processes $Y_n(t)$ and in the associated densities $u_n(t,x)$. 
To this end we need to look at the process $Y_n(t)$ as a couple $\pare{Y_n\pare{t}, I_n\pare{t}}$. In our notation $I_n(t)$ contains explicitly the information over the set ($A_n$ or $B_n$) in which $Y_n(t)$ is located. More precisely, $I_n\pare{t}=1$ (or $2$) if the particle is in $A_n$ (or in $B_n$ respectively)
at time $t$.

First, we assume that the initial position $Y_n(0)$ is described in terms of a given distribution $u_0 \in L^2(\Omega)$. We suppose that
\begin{align}
P\pare{Y_n\pare{0}\in E}=\int_{E} u_0(z) \, dz, \end{align}
for every measurable set $E\subseteq \overline\Omega$.

\begin{theorem} \label{teo.main.intro}
Let the initial condition be given by a distribution $u_0 \in L^2(\Omega)$.
Assume \eqref{cond.sets} and fix $T>0$. We have that, as $n\to \infty$,
\begin{equation}\label{limite.debil.u.intro}
\begin{array}{l}
\displaystyle u_n(t,x) \rightharpoonup u(t,x),\qquad \mbox{weakly in } L^2 ( (0,T) \times \Omega ), \\[5pt]
\displaystyle \chi_{A_n} (x) u_n(t,x) \rightharpoonup a(t,x),\qquad \mbox{weakly in } L^2 ( (0,T)\times \Omega ), \\[5pt]
\displaystyle \chi_{B_n} (x) u_n(t,x) \rightharpoonup b(t,x),\qquad \mbox{weakly in } L^2 ((0,T) \times \Omega ). 
\end{array}
\end{equation}
These limits verify
$$
u(t,x) = a(t,x) + b(t,x)
$$
and are characterized by
the fact that $(a,b)$ is the unique solution to the following system,
\begin{equation}\label{sys1.intro}
\left\{
\begin{array}{ll}
\displaystyle \frac{\partial a}{\partial t}\pare{ t,x}=\int_{\Omega}J\pare{x, y}\pare{X(x)a\pare{t,y}-X(y)a\pare{t,x}}\, dy
\\[10pt]
\displaystyle \qquad \qquad \quad 
+\int_{\Omega}R\pare{x, y}\pare{X(x)b(t,y)-\pare{1-X(y)}a(t,x)}\, dy \qquad & \, t>0,\, x\in \Omega , \\[10pt]
\displaystyle \frac{\partial b}{\partial t}\pare{t,x}=\int_\Omega G\pare{x, y}\cor{\pare{1-X(x)}b\pare{t,y}-\pare{1-X(y)}b\pare{t,y}}dy\\[10pt]
\qquad\qquad\quad +\displaystyle\int_\Omega R(x,y)\cor{\pare{1-X(x)}a(t,y)-X(y)b(t,x)}dy\, dy
\qquad & \, t>0, \,  x\in \Omega , \\[10pt]
a\pare{0,x}=X\pare{x}u_0\pare{x}, \quad b\pare{ 0,x}=\pare{1-X(x)}u_0\pare{x}
\qquad & x\in \overline\Omega.
\end{array} \right.
\end{equation}
\end{theorem}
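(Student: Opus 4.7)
The plan is to work with the pair $a_n(t,x):=\chi_{A_n}(x)u_n(t,x)$, $b_n(t,x):=\chi_{B_n}(x)u_n(t,x)$, derive the coupled system that they satisfy, and pass to the weak limit using the compactness afforded by the smoothness of $J$, $G$ and $R$.

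First I would establish well-posedness and uniform estimates. Since $L_n$ is bounded on $L^2(\Omega)$ (the kernels are continuous on $\overline\Omega\times\overline\Omega$), \eqref{evol.intro} has a unique mild solution $u_n\in C([0,T];L^2(\Omega))$. Testing against $u_n$, the symmetry of kernels and indicators yields the dissipation identity
\begin{equation}
\frac{d}{dt}\int_\Omega u_n^2\,dx=-\int_\Omega\!\int_\Omega K_n(x,y)\bigl(u_n(y)-u_n(x)\bigr)^2\,dy\,dx\le 0,
\end{equation}
where $K_n\ge 0$ is the combination of $J,G,R$ weighted by the appropriate products of indicators. Hence $\norm{u_n(t)}_{L^2}\le\norm{u_0}_{L^2}$ uniformly in $n$ and $t\in[0,T]$, so $u_n$, $a_n$, $b_n$ are uniformly bounded in $L^\infty(0,T;L^2(\Omega))$; together with the uniform bound on $\partial_tu_n=L_nu_n$ in the same space, they sit in a bounded set of $W^{1,\infty}(0,T;L^2(\Omega))$. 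Along a subsequence we obtain $u_n\rightharpoonup u$, $a_n\rightharpoonup a$, $b_n\rightharpoonup b$ weakly in $L^2((0,T)\times\Omega)$, with $u=a+b$.

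Multiplying \eqref{evol.intro} by $\chi_{A_n}(x)$ and using the identities $\chi_{A_n}(x)\chi_{A_n}(y)u_n(y)=\chi_{A_n}(x)a_n(y)$, $\chi_{A_n}(x)\chi_{A_n}(y)u_n(x)=\chi_{A_n}(y)a_n(x)$, together with the analogues involving $\chi_{B_n}$, one obtains
\begin{equation}
\frac{\partial a_n}{\partial t}=\int_\Omega J(x,y)\bigl[\chi_{A_n}(x)a_n(t,y)-\chi_{A_n}(y)a_n(t,x)\bigr]dy+\int_\Omega R(x,y)\bigl[\chi_{A_n}(x)b_n(t,y)-\chi_{B_n}(y)a_n(t,x)\bigr]dy,
\end{equation}
and the mirror equation for $b_n$, with initial data $a_n(0,\cdot)=\chi_{A_n}u_0$, $b_n(0,\cdot)=\chi_{B_n}u_0$. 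The central difficulty is to pass to the weak limit inside these integrals, where one meets products of two merely weakly convergent factors. The continuity of the kernels is crucial: for any $V\in\{J,G,R\}$ the operator $T_V:f\mapsto\int_\Omega V(\cdot,y)f(y)\,dy$ is compact on $L^2(\Omega)$ (its kernel is Hilbert--Schmidt). Combined with the uniform bound on $\partial_ta_n$, an Aubin--Lions-type argument forces $a_n(t,\cdot)\rightharpoonup a(t,\cdot)$ in $L^2(\Omega)$ for every $t\in[0,T]$; compactness of $T_V$ then yields $T_Va_n(t,\cdot)\to T_Va(t,\cdot)$ strongly in $L^2(\Omega)$ and, by dominated convergence, strongly in $L^2((0,T)\times\Omega)$. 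Pairing with $\chi_{A_n}(x)\rightharpoonup X(x)$ weakly-$\ast$ in $L^\infty$ produces $X(x)\int J(x,y)a(t,y)\,dy$ in the limit, while terms of the form $a_n(t,x)\int J(x,y)\chi_{A_n}(y)\,dy$ are handled symmetrically since $\int J(x,y)\chi_{A_n}(y)\,dy\to\int J(x,y)X(y)\,dy$ strongly in $L^2(dx)$ by the same compactness. Applying this reduction term by term in the weak formulation against $\phi\in C_c^1([0,T)\times\overline\Omega)$ and integrating by parts in $t$ delivers exactly \eqref{sys1.intro}, and the initial condition follows from $\chi_{A_n}u_0\rightharpoonup Xu_0$ in $L^2$.

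Uniqueness for \eqref{sys1.intro} is obtained by subtracting two solutions $(a_i,b_i)$, testing against their differences, and using the boundedness of $J$, $G$, $R$ with $0\le X\le 1$ to derive a Gronwall inequality for $\norm{a_1-a_2}_{L^2}^2+\norm{b_1-b_2}_{L^2}^2$; this promotes the subsequential convergence to convergence of the whole sequence. The main obstacle is the identification of the weak limits of the nonlocal quadratic terms just described: turning products of two weakly convergent factors into products of their weak limits is where the smoothness hypothesis \textbf{(H)} enters decisively, via the compactness of $T_J$, $T_G$, $T_R$ on $L^2(\Omega)$.
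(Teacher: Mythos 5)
Your proposal is correct and shares the paper's overall skeleton (uniform $L^2$ bound from the dissipation identity, weak subsequential limits of $a_n=\chi_{A_n}u_n$ and $b_n=\chi_{B_n}u_n$, passage to the limit in the weak formulation, and uniqueness for the limit system to upgrade to convergence of the whole sequence), but the delicate identification step is carried out by a different mechanism. The paper fixes the test function $\phi$, integrates the oscillating factor in the \emph{other} variable, and uses continuity of the kernel to show that $h_n(y)=\int_\Omega G(x,y)\chi_{B_n}(x)\phi(t,x)\,dx$ converges uniformly in $y$; pairing this strongly convergent factor with the merely weakly convergent $b_n(t,y)$ requires nothing beyond the space-time weak convergence already in hand. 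You instead use that $T_V f=\int_\Omega V(\cdot,y)f(y)\,dy$ is Hilbert--Schmidt, hence compact, and upgrade to $T_Va_n(t,\cdot)\to T_Va(t,\cdot)$ strongly; this needs weak convergence of $a_n(t,\cdot)$ for \emph{every} $t$, which you buy with the uniform bound on $\partial_t a_n=\chi_{A_n}L_nu_n$. That step is sound, but note it is not really Aubin--Lions (there is no compact spatial embedding and strong compactness of $a_n$ is in fact false because of the oscillating indicators); what you are using is the elementary fact that a sequence bounded and uniformly Lipschitz in time with values in $L^2$ has a subsequence converging in $C([0,T];L^2_{\mathrm{weak}})$, which identifies the pointwise-in-time weak limits with $a$. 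So your route costs an extra (harmless) time-equicontinuity argument that the paper avoids, while being a clean application of compact-operator theory. For uniqueness of \eqref{sys1.intro} you run an $L^2$ Gronwall estimate on the difference of two solutions, which is simpler than, and sufficient in place of, the paper's total-variation Gronwall argument for measure-valued solutions (Lemma \ref{lemma:weaksys}); combined with the subsequence trick this correctly yields convergence of the full sequence, and the identification of the initial data via $\chi_{A_n}u_0\rightharpoonup Xu_0$ matches the paper's \eqref{formula2}.
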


Remark that in the limit we obtain a system rather than a single equation. However, as we show here, 
there is uniqueness for solutions to the limit system and hence this characterizes the limit $u(t,x) = a(t,x) + b(t,x)$.

Before stating the next theorem that describes the limit distribution of our stocastic process, we introduce some notation. Given a metric space {$\mathcal{X}$}, for $T>0$, we denote by $D\pare{[0, T], {\mathcal{X}}}$ the space of all trajectories cadlag defined in $[0, T]$ and taking values in 
${\mathcal{X}}$. We consider $D\pare{[0, T], {\mathcal{X}}}$ endowed with the Skorohod topology (see Chapter 3 of \cite{bil} for more details). Our process $\pare{Y_n\pare{t}, I_n\pare{t}}_{t\in [0, T]}$ is in $D\pare{[0, T], \overline{\Omega}}\times D\pare{[0, T],\brc{1,2}}$ which we consider endowed with the product topology.

\begin{theorem}\label{Teo.limit.process}
The sequence of processes converges in distribution 
\begin{align}\label{convd}
\pare{Y_n\pare{t}, I_n\pare{t}}\xrightarrow[n\to +\infty]{D}\pare{Y\pare{t}, I\pare{t}}
\end{align}
in $D\pare{[0, T], \overline{\Omega}}\times D\pare{[0, T], \brc{1,2}}$, where the distribution of the limit $\pare{Y\pare{t}, I\pare{t}}$ is characterized by 
having as probability densities $a(t,x)$ and $b(t,x)$ defined in \eqref{limite.debil.u.intro}, that is,
\begin{align}
P\Big( Y\pare{t}\in  E, I(t) =1 \Big) = \int_{E} a(t,z) \, dz \quad \mbox{and} \quad 
P \Big( Y\pare{t}\in  E, I(t) =2 \Big) = \int_{E} b(t,z) \, dz ,
\end{align}
for every measurable set $ E\subseteq \overline\Omega$.
\end{theorem}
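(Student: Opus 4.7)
The plan is to combine tightness with identification of the finite-dimensional distributions of any subsequential limit; the latter relies on Theorem~\ref{teo.main.intro}.

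For tightness on $D([0,T],\overline{\Omega})\times D([0,T],\{1,2\})$, the state space being compact, it suffices to control the oscillations of jumps. By construction the pair $(Y_n,I_n)$ changes value only at the random times $\tau_k=\sum_{j\le k}\Upsilon_j$, where the $\Upsilon_j$ are i.i.d.\ exponential of parameter $1$ independently of $n$. Consequently the number of jumps in any interval of length $\delta$ is stochastically dominated by a Poisson random variable of parameter $\delta$, uniformly in $n$. From this one verifies Aldous's criterion
\begin{align*}
\lim_{\delta\downarrow 0}\limsup_{n\to\infty}\sup_{\tau,\,h\le\delta}P\Big(d\big((Y_n(\tau+h),I_n(\tau+h)),(Y_n(\tau),I_n(\tau))\big)>\varepsilon\Big)=0,
\end{align*}
with $\tau$ ranging over stopping times bounded by $T$; together with compactness of the state space this yields tightness.

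To identify the one-dimensional marginals of any limit point, take $\phi,\psi\in C(\overline{\Omega})$ and set $f(x,1)=\phi(x)$, $f(x,2)=\psi(x)$. Since $u_n(t,\cdot)$ is the density of $Y_n(t)$ (Corollary~\ref{corol.densidad}),
\begin{align*}
\mathbb{E}[f(Y_n(t),I_n(t))]=\int_\Omega\chi_{A_n}(x)u_n(t,x)\phi(x)\,dx+\int_\Omega\chi_{B_n}(x)u_n(t,x)\psi(x)\,dx,
\end{align*}
and Theorem~\ref{teo.main.intro} gives convergence of the two integrands weakly in $L^2$ to $a(t,x)$ and $b(t,x)$, producing the marginals stated in the theorem. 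To lift this to joint distributions at $0<t_1<\dots<t_m\le T$, I invoke the Markov property of $(Y_n,I_n)$: the multi-time expectation factors through iterated applications of the semigroup $e^{tL_n}$, which is equivalent to iterating the forward evolution \eqref{evol.intro} with each intermediate conditional density serving as initial datum for the next interval. Since Theorem~\ref{teo.main.intro} applies to any $L^2$ initial datum and selects a unique limit through \eqref{sys1.intro}, linearity of the flow allows one to pass to the limit at each stage.

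The delicate point is precisely this iteration, because the densities $u_n(t,\cdot)$ converge only weakly in $L^2$ while the Markov step composes the flow with intermediate conditional states. The cleanest route is to reformulate everything through the martingale problem: pass to the limit in $M_n(t)=f(Y_n(t),I_n(t))-\int_0^t L_nf(Y_n(s),I_n(s))\,ds$ for test functions $f$ on the pair $(x,i)$, and check that any weak subsequential limit solves the martingale problem associated with the generator read off from \eqref{sys1.intro} on the enlarged state space $\overline{\Omega}\times\{1,2\}$ in which the index $I$ becomes conditionally Bernoulli with parameters $X(x),\,1-X(x)$ given $Y=x$. Well-posedness of that martingale problem, a consequence of the uniqueness proved for \eqref{sys1.intro} together with linearity of the semigroup, selects the limit law; combined with tightness this upgrades convergence of finite-dimensional marginals to Skorohod convergence, completing the proof.
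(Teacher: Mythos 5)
Your architecture (tightness plus identification of limit points through a martingale problem) is the same as the paper's, and your tightness sketch is fine --- indeed the observation that nothing moves unless the rate-one clock rings gives Aldous's criterion even more directly than the paper's martingale estimates. But the heart of the proof is missing, and one structural claim is wrong. The missing step is the replacement of $\mathcal L_{n}$ by the limit generator inside the martingale: you must show
\begin{equation}
\lim_{n_k\to\infty}\mathbb{E}^{P_{n_k}}\left(\int_{t_0}^{t}\abs{\mathcal L_{n_k}f\pare{Y(s),I(s)}-\widetilde{\mathcal L}f\pare{Y(s),I(s)}}\,ds\right)=0,
\end{equation}
and this is exactly where the difficulty you flagged (only weak convergence of $\chi_{A_n}$, $\chi_{A_n}u_n$) reappears; saying ``check that any weak subsequential limit solves the martingale problem'' is a restatement of the goal, not an argument. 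The paper resolves it by using that $u_{n}(s,\cdot)$ is the density of $Y_n(s)$ (Corollary \ref{corol.densidad}) to rewrite this expectation as $\int_{t_0}^t\int_\Omega|\Upsilon^1_{n_k}+\dots+\Upsilon^4_{n_k}|\,u_{n_k}(s,x)\,dx\,ds$, and then exploits continuity of the kernels to get $\sup_{x}\abs{\int_\Omega\{\chi_{A_n}(y)-X(y)\}V(x,y)f(y,\cdot)\,dy}\to 0$, combined with $\chi_{A_n}u_n\rightharpoonup a$, $\chi_{B_n}u_n\rightharpoonup b$. Some device of this kind (uniform-in-$x$ convergence in the $y$-variable against the density) is indispensable and absent from your proposal.

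Second, the limit generator is not correctly identified. System \eqref{sys1.intro} is the \emph{forward} (density) system; the generator of the limit pair is its adjoint, namely $\widetilde{\mathcal L}f(x,1)=\int_\Omega X(y)J(x,y)\pare{f(y,1)-f(x,1)}dy+\int_\Omega\pare{1-X(y)}R(x,y)\pare{f(y,2)-f(x,1)}dy$ (and symmetrically for $i=2$), with the weights evaluated at the \emph{target} point; passing from this to \eqref{sys1.intro} uses the symmetry of $J,G,R$ (the change of variables in the paper's computation). In particular your description of the limit dynamics as one in which ``$I$ becomes conditionally Bernoulli with parameters $X(x),1-X(x)$ given $Y=x$'' is false for $t>0$: it would force $a(t,\cdot)=X\,u(t,\cdot)$ and $b(t,\cdot)=(1-X)u(t,\cdot)$, a relation not preserved by \eqref{sys1.intro} unless the kernels coincide. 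Finally, the uniqueness you invoke is not quite the one you need: a limit law is a priori only a measure, so well-posedness requires uniqueness of \emph{measure-valued} solutions of the forward equations (the paper proves this separately, Lemma \ref{lemma:weaksys}, by a Gronwall estimate in total variation); $L^2$ uniqueness for \eqref{sys1.intro} alone does not directly rule out singular solutions.
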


In the following theorem we finally study the asymptotic behaviour of $u_n$, as $t\to \infty$, proving exponential convergence to
the unique stationary distribution.
\begin{theorem} \label{teo.comport.intro0} 
Let the initial condition be given by a distribution $u_0 \in L^2(\Omega)$.
There exist two constants $A>0$ (depending only on the domain and the kernels) and $C>0$
(that depends on the initial condition), such that
\begin{align}
\norm{u_n(t, \cdot) -\frac{1}{|\Omega|}}^2_{L^2(\Omega)}\leq Ce^{-At}.
\end{align}
\end{theorem}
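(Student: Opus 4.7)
The plan is to run a standard energy argument for the Dirichlet form associated with the self-adjoint generator $L_n$, using mass conservation to reduce to a zero-mean problem and then a uniform-in-$n$ Poincar\'e inequality to force exponential decay.

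First I would rewrite $L_n$ in the symmetric form $L_n f(x) = \int_\Omega K_n(x,y)(f(y)-f(x))\,dy$ with
\[ K_n(x,y) \defi \chi_{A_n}(x)\chi_{A_n}(y)\,J(x,y) + \chi_{B_n}(x)\chi_{B_n}(y)\,G(x,y) + \cor{\chi_{A_n}(x)\chi_{B_n}(y)+\chi_{B_n}(x)\chi_{A_n}(y)}\,R(x,y). \]
Since $J$, $G$, $R$ are symmetric, so is $K_n$, and hence
\[ \int_\Omega f\, L_n f\,dx \,=\, -\mathcal{E}_n(f,f), \qquad \mathcal{E}_n(f,f) \defi \tfrac{1}{2}\iint_{\Omega\times\Omega} K_n(x,y)\pare{f(y)-f(x)}^2\,dy\,dx \,\geq\, 0. \]
Mass is preserved by \eqref{evol.intro}, so $\int_\Omega u_n(t,\cdot)\,dx = \int_\Omega u_0\,dx$ for every $t\geq 0$; assuming the normalization $\int_\Omega u_0 = 1$ implicit in the stated limit (otherwise it should read $\frac{1}{\abs{\Omega}}\int_\Omega u_0$), I set $v_n(t,x)\defi u_n(t,x) - \frac{1}{\abs{\Omega}}$ so that $\int_\Omega v_n(t,\cdot) = 0$ and $L_n v_n = L_n u_n$. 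Multiplying the equation by $v_n$ and integrating yields
\[ \frac{d}{dt}\norm{v_n(t,\cdot)}_{L^2(\Omega)}^2 \,=\, -2\,\mathcal{E}_n(v_n,v_n). \]

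The key step is then to establish a uniform-in-$n$ Poincar\'e inequality
\[ \mathcal{E}_n(v,v) \,\geq\, A\int_\Omega v^2\,dx \qquad\textnormal{for every }v\in L^2(\Omega)\textnormal{ with }\int_\Omega v = 0, \]
with $A>0$ depending only on $\Omega$ and the three kernels. To produce such an $A$ I would discard all partition-specific structure and rely only on the diagonal positivity in (H): by continuity on the compact set $\overline\Omega\times\overline\Omega$ and strict positivity at the diagonal, there exist $c_0,\delta_0>0$ such that $\min\brc{J(x,y),G(x,y),R(x,y)}\geq c_0$ whenever $\abs{x-y}<\delta_0$. At every $(x,y)$ exactly one of the four indicator products appearing in $K_n$ equals $1$, which gives the $n$-independent pointwise bound $K_n(x,y)\geq c_0\,\chi_{\brc{\abs{x-y}<\delta_0}}$; the desired inequality then reduces to the classical nonlocal Poincar\'e inequality on a bounded connected Lipschitz domain for the truncated kernel $\chi_{\brc{\abs{x-y}<\delta_0}}$, which follows from a chaining argument over a finite cover of $\overline\Omega$ by balls of radius $\delta_0/2$.

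Combining the two estimates gives $\tfrac{d}{dt}\norm{v_n(t,\cdot)}_{L^2}^2 \leq -2A\,\norm{v_n(t,\cdot)}_{L^2}^2$, and Gronwall produces the claim with $C\leq \norm{v_n(0,\cdot)}_{L^2(\Omega)}^2$, itself controlled in terms of $\norm{u_0}_{L^2}$ alone. The main obstacle here is precisely the uniformity of $A$ in $n$: because $K_n$ reshuffles the three kernels according to an $n$-dependent partition, a Poincar\'e inequality associated to any single one of $J$, $G$ or $R$ cannot be transferred directly; the resolution is to extract a partition-free lower bound, which the common diagonal positivity of $J$, $G$ and $R$ makes possible.
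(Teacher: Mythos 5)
Your argument is correct and follows the same skeleton as the paper's proof: subtract the constant $\frac{1}{|\Omega|}$ using conservation of mass, derive the energy identity $\frac{d}{dt}\|v_n(t,\cdot)\|_{L^2(\Omega)}^2=-2\mathcal{E}_n(v_n,v_n)$ (the paper's $E_n$ in \eqref{ener} is your $\mathcal{E}_n$ up to the bookkeeping of constants), invoke an $n$-uniform Poincar\'e-type inequality for the mixed energy, and conclude by Gronwall. The only divergence is at the key lemma: the paper simply cites Lemma 4.1 of \cite{nosotros} for the uniform bound $E_n(w)\geq c\int_\Omega w^2$ on zero-average functions, whereas you prove it directly by observing that exactly one of the four indicator products is active at each pair $(x,y)$, so that hypothesis (H) (continuity plus strict positivity on the diagonal, compactness of $\overline\Omega$) gives the partition-free lower bound $K_n(x,y)\geq c_0\,\chi_{\{|x-y|<\delta_0\}}$, reducing matters to the classical nonlocal Poincar\'e inequality for a truncated kernel on a bounded connected Lipschitz domain. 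This makes your write-up self-contained where the paper is not, at the cost of having to carry out (or correctly attribute) the chaining argument; it is in substance the same mechanism that underlies the cited lemma, and your constants $A$ (from $c_0,\delta_0,\Omega$) and $C$ (from $\|u_0-\frac{1}{|\Omega|}\|_{L^2(\Omega)}^2$) have exactly the dependencies claimed in Theorem \ref{teo.comport.intro0}.
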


Now, we fix a point $\bar{x}\in \Omega$ and analyze the case
in which the initial position is given by $Y_n(0) = \bar{x}$, that is, we assume that
\begin{align}
P\pare{Y_n\pare{0}\in E}= \delta_{\bar{x}}(E)=
\left\{
\begin{array}{ll}
1 \qquad & \bar{x} \in E, \\[5pt]
0 \qquad & \bar{x} \not\in E,
\end{array}
\right.
\end{align}
for every measurable set $E\subseteq \overline\Omega$.

In this case there is no convergence of the whole sequence $a_n$, $b_n$, but only convergence along
subsequences. This can be expected from the fact that the initial condition for $a_n$ (the initial condition fir $b_n$ is similar) 
satisfies
$$
\int_\Omega \varphi (x) a_n(0,x) dx = \int_\Omega \varphi(x) \chi_{A_n}(x) \delta_{\bar{x}}(dx) = 
\left\{
\begin{array}{ll}
\varphi (\bar{x}) \qquad & \bar{x} \in A_n, \\[5pt]
0 \qquad & \bar{x} \not\in A_n,
\end{array}
\right.
$$
that only converges along subsequences with $\bar{x} \in A_{n_j}$ or $\bar{x} \not\in A_{n_j}$ for every $n_j$.

Call $\pare{\nu^n_t}_{t}$ the law of $\pare{Y_n(t)}_t$.
By Dynkin's formula we know that for every $G$ continuous
\begin{align}
\begin{cases}
\displaystyle\frac{d}{dt}\int_\Omega G(x)\nu_t^n(dx)=\int_\Omega L_n G(x)\nu_t^n(dx),\\[6pt]
\displaystyle\int_\Omega G(x)\nu_0^n(dx)=G(\bar x),
\end{cases}
\end{align}
where $L_n$ is the generator of our process as described before. Since the involved kernels are smooth,
this evolution problem does not have a regularizing effect and therefore we expect that the measure $\delta_{\bar x}$,
that is the initial condition, remains for positive times. Hence, we write $\nu_t^n$ as an absolutely continuous part plus 
a time-dependent multiple of $\delta_{\bar x}$, that is, 
\begin{equation} \label{descomp.intro}
\nu_t^n(dx):=z_n(t, x)dx+\sigma_n(t)\delta_{\bar x}(dx).
\end{equation}
Now, we assume that
\begin{align}
&\chi_{A_n}(\bar x)\int_\Omega\chi_{A_n}(y) J(\bar x, y)dy+\chi_{B_n}(\bar x)\int_\Omega\chi_{B_n}(y) G(\bar x, y)dy
+\chi_{A_n}(\bar x)\int_\Omega\chi_{B_n}(y) R(\bar x, y)dy
+\chi_{B_n}(\bar x)\int_\Omega\chi_{A_n}(y) R(\bar x, y)dy=1.
\end{align}  
This condition says that the particle jumps with full probability (that is, the probability of staying at the same location when
the exponential clock rings is zero).
Under this condition, from the expression of $L_n$ we obtain that the time-dependent multiple of $\delta_{\bar{x}}$ is exponentially 
decreasing in time (independent on $n$) (see Section \ref{sect-deltas}) 
\begin{align}\label{due.intro}
\sigma_n(t)= e^{-t}, \quad \forall n\in\mathbb N.
\end{align}
This fact can be interpreted as follows: when the particle jumps for the first time the probability density passes
from being a delta at times $s<\tau_1$ to an absolutely continuous measure (recall that the kernels are smooth) for times 
greater $s>\tau_1$ and this first jump $\tau_1$ is distributed as an exponential of parameter 1. 

On the other hand, we have an equation for $z_n$,
\begin{align}\label{dom1.intro}
\frac{\partial z_n}{\partial t} (t, x)=&\chi_{A_n}(x)\int_\Omega\chi_{A_n}(y) J(x, y)\pare{z_n(t, y)-z_n(t, x)}dy+e^{-t} \chi_{A_n}(\bar x) \chi_{A_n}(x)J(\bar x, x)\\
&+\chi_{B_n}(x)\int_\Omega\chi_{B_n}(y) G(x, y)\pare{z_n(t, y)-z_n(t, x)}dy+ e^{-t}\chi_{B_n}(\bar x)\chi_{B_n}(x)G(\bar x, x)\\
&+\chi_{A_n}(x)\int_\Omega\chi_{B_n}(y) R(x, y)\pare{z_n(t, y)-z_n(t, x)}dy+ e^{-t}\chi_{A_n}(\bar x) \chi_{B_n}(x)R(\bar x, x)\\
&+\chi_{B_n}(x)\int_\Omega\chi_{A_n}(y) R(x, y)\pare{z_n(t, y)-z_n(t, x)}dy+ e^{-t} \chi_{B_n}(\bar x) \chi_{A_n}(x)R(\bar x, x),
\end{align}
with initial condition $z_n(0,x)=0$.

Notice that $z_n(t, x)$ is a function in $L^2(\Omega)$ for every $t>0$. Therefore, the solution to our evolution problem
with initial condition $\delta_{\bar{x}}$ is given by an absolutely continuos (with respect to the Lebesgue measure) part,
$z_n$, and a singular part, $e^{-t} \delta_{\bar{x}}$ (in this singular part the delta measure remains but decays exponentially fast in time). 

Now, we want to look at the limit as $n\to \infty$. Since the singular part of the solution, $e^{-t} \delta_{\bar{x}}$, is independent of $n$
we have to look for the behaviour of $z_n$ as $n\to \infty$ (here as we already mentioned we can only show convergence along subsequences).

 \begin{theorem}\label{teo.intro.med}
Given $(z_n)_{n\in \mathbb{N}}$ there is a subsequence $z_{n_k}$ that is weakly  convergent in $L^2((0,T) \times \Omega )$.
Moreover, it holds that
\begin{align}
\chi_{A_{\nk}}(x)z_{\nk}(t,x)\rightharpoonup a_k(t, x),\\
\chi_{B_{\nk}}(x)z_{\nk}(t,x)\rightharpoonup b_k(t, x),
\end{align}
weakly in $L^2((0,T) \times \Omega )$,
where $\pare{a_k(t, x),b_k(t, x)}$ is a solution to 
\begin{equation}\label{sys1.introa.22}
\left\{
\begin{array}{ll}
\displaystyle \frac{\partial a}{\partial t}\pare{t,x}=\int_{\Omega}J\pare{x, y}\pare{X(x)a\pare{t,y}-X(y)a\pare{t,x}}\, dy
\\[10pt]
\displaystyle \qquad \qquad \quad 
+\int_{\Omega}R\pare{x, y}\pare{X(x)b(t,y)-\pare{1-X(y)}a(t,x)}\, dy   +e^{-t}J(\bar x, x)X(x)\qquad &  \, t>0,\, x\in \Omega ,\\[10pt]
\displaystyle\frac{\partial b}{\partial t}\pare{ t,x}=\int_\Omega G\pare{x, y}\pare{\pare{1-X(x)}b\pare{t,y}-\pare{1-X(y)}b\pare{t,x}}dy\\[10pt]
\displaystyle\qquad\qquad+\int_\Omega R(x,y)\pare{\pare{1-X(x)}a(t,y)-X(y)b(t,x)}dy\, dy +e^{-t}R(\bar x, x)\pare{1-X(x)}
\qquad &  \, t>0,\, x\in \Omega ,\\[10pt]
a\pare{ 0,x}=0, \quad b\pare{0,x}=0
\qquad & x\in \overline\Omega,
\end{array} \right.
\end{equation}
or to
\begin{equation}\label{sys1.introb}
\left\{
\begin{array}{ll}
\displaystyle \frac{\partial a}{\partial t}\pare{t,x}=\int_{\Omega}J\pare{x, y}\pare{X(x)a\pare{t,y}-X(y)a\pare{t,x}}\, dy
\\[10pt]
\displaystyle \qquad \qquad \quad 
+\int_{\Omega}R\pare{x, y}\pare{X(x)b(t,y)-\pare{1-X(y)}a(t,x)}\, dy  +e^{-t}R(\bar x, x)X(x)\qquad & \, t>0,\, x\in \Omega , \\[10pt]
\displaystyle\frac{\partial b}{\partial t}\pare{t,x}=\int_\Omega G\pare{x, y}\pare{\pare{1-X(x)}b\pare{t,y}-\pare{1-X(y)}b\pare{t,x}}dy\\[10pt]
\displaystyle\qquad\qquad+\int_\Omega R(x,y)\pare{\pare{1-X(x)}a(t,y)-X(y)b(t,x)}dy\, dy
+e^{-t}G(\bar x, x)\pare{1-X(x)}
\qquad &  \, t>0,\, x\in \Omega ,\\[10pt]
a\pare{0,x}=0, \quad b\pare{0,x}=0
\qquad & x\in \overline\Omega.
\end{array} \right.
\end{equation}
\end{theorem}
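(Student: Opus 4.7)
The plan is to follow the strategy of Theorem \ref{teo.main.intro} applied to $z_n$, which solves a variant of the same evolution equation with the additional source term $F_n(t,x)$ consisting of the four $e^{-t}$-contributions in \eqref{dom1.intro}. First I would obtain uniform $L^2$ estimates: testing \eqref{dom1.intro} against $z_n$ and exploiting the symmetry $J(x,y)=J(y,x)$ (and similarly for $G,R$), the four diffusion integrals supply a nonpositive dissipation, while $\|F_n(t,\cdot)\|_{L^2(\Omega)}\leq C\,e^{-t}$ by continuity of $J,G,R$ on the compact set $\overline\Omega\times\overline\Omega$. A Gr\"onwall argument then produces a bound for $z_n$ in $L^\infty\!\bigl((0,T);L^2(\Omega)\bigr)$ uniform in $n$, which transfers to $a_n\defi\chi_{A_n}z_n$ and $b_n\defi\chi_{B_n}z_n$. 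Weak compactness in $L^2\bigl((0,T)\times\Omega\bigr)$ gives a subsequence $n_k$ along which $z_{n_k}\rightharpoonup z$, $a_{n_k}\rightharpoonup a_k$, $b_{n_k}\rightharpoonup b_k$.

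The new element compared to Theorem \ref{teo.main.intro} is the $\bar x$-dependent forcing. Since $\overline{\Omega}=A_n\cup B_n$ disjointly, for each $n$ exactly one of $\chi_{A_n}(\bar x),\chi_{B_n}(\bar x)$ equals $1$; by the pigeonhole principle we may pass to a further subsequence, still denoted $n_k$, on which the same alternative holds for every $k$. Consider the case $\bar x\in A_{n_k}$ for all $k$ (the case $\bar x\in B_{n_k}$ is symmetric). Multiplying \eqref{dom1.intro} by $\chi_{A_n}(x)$ and by $\chi_{B_n}(x)$ yields evolution equations for $a_n$ and $b_n$ whose forcings reduce, in this case, to $e^{-t}\chi_{A_n}(x)J(\bar x,x)$ and $e^{-t}\chi_{B_n}(x)R(\bar x,x)$ respectively. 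Using $\chi_{A_n}\rightharpoonup X$ and $\chi_{B_n}\rightharpoonup 1-X$ weakly in $L^\infty$, these pass to $e^{-t}X(x)J(\bar x,x)$ and $e^{-t}(1-X(x))R(\bar x,x)$, which are precisely the forcings in \eqref{sys1.introa.22}; the alternative $\bar x\in B_{n_k}$ analogously produces \eqref{sys1.introb}.

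It remains to pass to the limit in the diffusion parts of the equations for $a_n$ and $b_n$. Writing the equations in weak form against smooth test functions $\varphi(t,x)$ and using Fubini together with the symmetry of the kernels, one rearranges each double integral into a pairing between a single weakly convergent factor ($a_n$, $b_n$, $\chi_{A_n}$ or $\chi_{B_n}$) and a continuous kernel integrated against the other. Compactness of the integral operators with continuous kernels on $L^2(\Omega)$ converts such pairings into strong-times-weak products that pass to the limit, yielding precisely the right-hand sides of the claimed system. The main obstacle---avoiding the illegal product of two only-weakly-convergent sequences---is the one already resolved in the proof of Theorem \ref{teo.main.intro} and is bypassed in the same way: $z_n$ enters the equation linearly, so the products that appear after rearrangement are always of the form $\chi_{A_n}(x)\bigl(\int J(x,y)a_n(t,y)\,dy\bigr)$ or $a_n(t,x)\int \chi_{A_n}(y)J(x,y)\,dy$, in which the second factor is strongly convergent in $L^2_x$ uniformly in $t$. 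The initial conditions $a_k(0,\cdot)=b_k(0,\cdot)=0$ are inherited from $z_n(0,\cdot)=0$ in the weak formulation, completing the identification of $(a_k,b_k)$ as a solution to \eqref{sys1.introa.22} or \eqref{sys1.introb}.
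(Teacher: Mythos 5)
Your proposal is correct and follows essentially the same route as the paper: a uniform $L^2$ bound as in Lemma \ref{lema-3-2-} (now with the exponentially decaying source absorbed by Gr\"onwall), extraction of a sub-subsequence on which either $\bar x\in A_{n_k}$ for all $k$ or $\bar x\in B_{n_k}$ for all $k$, and passage to the limit in the weak formulation tested against $\chi_{A_n}\phi$ and $\chi_{B_n}\phi$, handling the kernel terms exactly as in \eqref{formula3}--\eqref{formula5} and the forcing terms via the weak-$*$ convergence of $\chi_{A_n}$, $\chi_{B_n}$. The only cosmetic difference is that you invoke compactness of integral operators with continuous kernels, whereas the paper phrases the same mechanism as uniform convergence of $\int_\Omega V(x,y)\chi_{A_n}(x)\phi(t,x)\,dx$; your own remark that the second factor converges strongly in $L^2_x$ uniformly in $t$ is the form actually needed, so there is no gap.
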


The first system, $\eqref{sys1.introa.22}$, occurs when the convergent subsequence is such that $\bar{x} \in A_{n_k}$
for every $n_k$; while the second one, \eqref{sys1.introb}, appears when $\bar{x} \in B_{n_k}$
for every $n_k$.

Notice that the two possible limit systems are similar but different since in \eqref{sys1.introa.22} we have exponential terms
like $e^{-t}J(\bar x, x)X(x)$ and $e^{-t}R(\bar x, x)\pare{1-X(x)}$
 while in
\eqref{sys1.introb} the terms $e^{-t}R(\bar x, x)X(x)$ and $e^{-t}G(\bar x, x)\pare{1-X(x)}$ appear. 
Also remark that both systems \eqref{sys1.introa.22} and  \eqref{sys1.introb} are similar to \eqref{sys1.intro}
except by the fact that the exponential terms do not appear in \eqref{sys1.intro} (c.f. Theorem \ref{teo.main.intro}). 
In addition, we have that also the limit $u(t,x)=a(t,x)+b(t,x)$ is different in the two previously mentioned cases 
(notice that in the first system the term $e^{-t}G(\bar x, x)\pare{1-X(x)}$ that involves the kernel $G$ does not appear;
while in the second one the term $e^{-t}J(\bar x, x)X(x)$ is missing).  

We finally analyze the asymptotic behaviour of $z_n$, as $t\to \infty$, proving exponential convergence to
the unique stationary distribution.
\begin{theorem} \label{teo.comport.intro} There exist $A>0$ and $C>0$, such that
\begin{align}
\norm{z_n(t, \cdot) -\frac{1}{|\Omega|}}^2_{L^2(\Omega)}\leq Ce^{-At},
\end{align}
for $t$ large enough.
\end{theorem}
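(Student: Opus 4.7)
The plan is to reduce the claim to the exponential decay already proved in Theorem~\ref{teo.comport.intro0}. First I would rewrite equation \eqref{dom1.intro} in the compact form
\[
\frac{\partial z_n}{\partial t} = L_n z_n + e^{-t}\, g_n, \qquad z_n(0,\cdot)=0,
\]
where $L_n$ is the generator in \eqref{gen.intro} and
\[
g_n(x):=\chi_{A_n}(\bar x)\chi_{A_n}(x)J(\bar x,x)+\chi_{B_n}(\bar x)\chi_{B_n}(x)G(\bar x,x)+\chi_{A_n}(\bar x)\chi_{B_n}(x)R(\bar x,x)+\chi_{B_n}(\bar x)\chi_{A_n}(x)R(\bar x,x).
\]
Hypothesis {\bf (H)} guarantees that $\|g_n\|_{L^\infty(\Omega)}$ is bounded uniformly in $n$, while the ``full jump'' assumption made just before \eqref{due.intro} gives $\int_\Omega g_n\,dx=1$ for every $n$. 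In particular, $g_n-1/|\Omega|$ has zero mean and its $L^2$--norm is uniformly bounded in $n$.

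Since $L_n$ annihilates constants, Duhamel's formula applied to the compact equation above yields
\[
z_n(t,\cdot)-\frac{1-e^{-t}}{|\Omega|}=\int_0^t e^{-s}\, S_n(t-s)\Bigl(g_n-\frac{1}{|\Omega|}\Bigr)\,ds,
\]
where $S_n(\tau):=e^{\tau L_n}$ denotes the semigroup generated by $L_n$ on $L^2(\Omega)$. Applying Theorem~\ref{teo.comport.intro0} to the homogeneous evolution with initial datum $g_n-1/|\Omega|$ (which is mean-zero and remains so under $S_n$) provides $\|S_n(\tau)(g_n-1/|\Omega|)\|_{L^2(\Omega)}\leq Ce^{-A\tau}$ with constants $A,C>0$ independent of $n$. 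Inserting this bound in the Duhamel formula and integrating,
\[
\Bigl\|z_n(t,\cdot)-\frac{1-e^{-t}}{|\Omega|}\Bigr\|_{L^2(\Omega)}\leq C\int_0^t e^{-A(t-s)}e^{-s}\,ds \leq C'\, e^{-\min(A,1)t}
\]
for $t$ large (with a harmless $t$--factor in the resonant case $A=1$). Combining this with the trivial bound $\|(1-e^{-t})/|\Omega|-1/|\Omega|\|_{L^2}=e^{-t}|\Omega|^{-1/2}$, and possibly lowering $A$ slightly to absorb the polynomial factor, gives the claimed estimate.

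The one delicate point is the uniformity in $n$ of both $A$ and $C$ in the decay estimate for $S_n$, and this is precisely what Theorem~\ref{teo.comport.intro0} provides. Should one prefer a self-contained argument, the natural substitute is the energy identity
\[
\frac{1}{2}\frac{d}{dt}\|\phi_n\|_{L^2(\Omega)}^2 = \langle \phi_n, L_n\phi_n\rangle + e^{-t}\langle \phi_n,\, g_n-1/|\Omega|\rangle,\qquad \phi_n(t,\cdot):=z_n(t,\cdot)-\frac{1-e^{-t}}{|\Omega|},
\]
combined with a uniform nonlocal Poincar\'e inequality $-\langle \phi, L_n\phi\rangle\geq \lambda \|\phi\|_{L^2(\Omega)}^2$ valid on mean--zero functions (which in turn follows from $V(x,x)>0$ in {\bf (H)} and the continuity of the kernels, and is uniform in $n$ by the partition argument underlying Theorem~\ref{teo.comport.intro0}) and Young's inequality, leading to $y'(t)\leq -\lambda\, y(t)+Ce^{-2t}$ for $y(t):=\|\phi_n(t,\cdot)\|_{L^2(\Omega)}^2$ and hence exponential decay by Gronwall. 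Establishing this uniform spectral gap is therefore the main analytic obstacle; once granted, the rest of the proof is routine semigroup bookkeeping.
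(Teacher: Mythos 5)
Your proposal is correct, and your fallback energy argument is, in fact, the paper's own proof: the authors set $w_n(t,x)=z_n(t,x)-\frac{1}{|\Omega|}(1-e^{-t})$ (your $\phi_n$), derive exactly your energy identity with the forcing term $e^{-t}\int_\Omega c(x,\bar x,n)\,w_n\,dx$ where $c(x,\bar x,n)$ is your $g_n$, invoke the uniform-in-$n$ spectral gap $E_n(w)\geq c\,\|w\|_{L^2(\Omega)}^2$ on mean-zero functions (Lemma 4.1 of \cite{nosotros}, the same inequality you call a uniform nonlocal Poincar\'e inequality), and close with a differential inequality of the form $y'\leq -4c_1 y+2c_2e^{-t}$ for $t\geq\bar t$ and an ODE comparison. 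Your primary route is a genuinely different packaging of the same ingredients: you treat \eqref{dom1.intro} as an inhomogeneous linear equation, use Duhamel's formula with the semigroup $S_n(\tau)=e^{\tau L_n}$ (which exists since $L_n$ is bounded on $L^2(\Omega)$ and annihilates constants), and import the uniform decay on mean-zero data from Theorem \ref{teo.comport.intro0} -- more precisely from the estimate \eqref{exp.ine} in its proof, which shows the constant depends on the data only through its $L^2$ norm, hence is uniform in $n$ because $\|g_n\|_{L^\infty}$ is uniformly bounded; together with $\int_\Omega g_n\,dx=1$ (a consequence of the full-jump normalization, which is what makes $g_n-\frac{1}{|\Omega|}$ mean-zero) this yields the claim after the convolution-in-time estimate, modulo the harmless resonant case $A=1$ that you correctly flag. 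The semigroup route buys a cleaner reduction to the already-proved homogeneous decay, while the paper's direct energy/Gronwall computation avoids any discussion of mild solutions and resonance; both rest on the same uniform spectral gap, which neither argument can dispense with.
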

As a consequence of this theorem we obtain the asymptotic behaviour for $\pare{\nu^n_t}_{t}$ the law of our process
$\pare{Y_n(t)}_t$ starting at $\delta_{\bar{x}}$. For every continuous function $g$, there exist $\widehat{A}>0$  
and $\widehat{C}>0$ (independent of $g$ and $\bar{x}$) such that
$$
\left| \int_\Omega {g}(x)\nu_t^n(dx) - \frac{1}{|\Omega|} \int_\Omega {g}(x)\, dx  \right| =
\left| \int_\Omega {g}(x)z_n(t,x) \, dx + e^{-t } {g}(\bar{x}) - \frac{1}{|\Omega|} \int_\Omega {g}(x)\, dx   \right| \leq \| {g}\|_{L^\infty (\Omega)} 
\widehat{C}e^{-\widehat{A}t}.
$$
That is, we have that $\pare{\nu^n_t}$ converges exponentially in the sense of measures
to the unique stationary distribution as $t \to +\infty$.

Now, let us end the introduction with a brief description of previous results and comments on the ideas and difficulties involved in our proofs. 

Nonlocal equations with smooth kernels like the ones considered here has been widely studied and used in the literature 
as models in different applied scenarios, see for example, \cite{ElLibro,BCh,CF,Chasseigne-Chaves-Rossi-2006,delia1,delia2,delia3,F,Gal}.
Here we have a model in which the jumping probabilities depend on three different kernels $J$, $G$ and $R$
that act in different parts of the domain (thus, our model problem can be seen as a coupling between two nonlocal equations that
occur in the sets $A$ and $B$).
For other couplings (even considering local equations and nonlocal ones) we refer to \cite{CaRo,delia1,delia2,delia3,Du,GQR,Gal,Kra}.

Homogenization for PDEs is by now a classical subject that originated in the study of the behaviour
of the solutions to elliptic and parabolic local equations with highly oscillatory coefficients (periodic homogenization). 
We refer to
\cite{Ben,TT, Tar} as general references for the subject.
For other kinds of homogenization for pure nonlocal problems with one kernel we
refer to \cite{marc1,marc2,marc3}. For homogenization results for nonlocal equations with a singular 
kernel (like the one that appears in the fractional Laplacian) we refer to \cite{Ca,sw2,Wa} and references therein. We emphasize that the previously mentioned references
deal with homogenization in the coefficients involved in the equation. For random homogenization of an obstacle problem we refer to \cite{caffa2}. 
For mixing local and nonlocal processes we refer to \cite{CaRo}.
Here we deal with an homogenization problem that is different in nature with the ones treated in the previously mentioned references
as we homogenize mixing three different jump operators with smooth kernels.

Finally, let us describe the main ingredients that appear in the proofs. First, we show 
weak convergence along subsequences of $u_n$, $\chi_{A_n}u_n$ and $\chi_{B_n}u_n$ (these convergences comes from
a uniform bound in $L^2$). Next, we find the system that these limits verify. This part of the proof
is delicate since we have to  
pass to the limit in the week form of the equation that involves terms like $\chi_{B_n} (x) \chi_{A_n} (y) {u_n (t,y)} J(x-y)$ and we only have weak convergence of $\chi_{B_n}$ and
$\chi_{A_n} u_n$. 
Here we need to rely on the continuity of $J$ and use the fact that the product
$\chi_{B_n} (x) \chi_{A_n} (y) {u_n (t,y)} J(x-y)$ involves two different variables, $x$ and $y$. Finally, we show
uniqueness of the limit by proving uniqueness of solutions to the limit system.

To show the convergence of the process we first prove that $Y_n(t)$ has a probability density $u_n( t,x)$ which is the unique solution to system \eqref{evol.intro}. Next, we prove that the laws of the processes $\pare{Y_n\pare{t}, I_n\pare{t}}_{t\in [0, T]}$ form a tight sequence and finally we characterize the limit as the limit process.

	When the initial condition is $u_0=\delta_{\bar{x}}$ we use analogous arguments, but in this case we
	need extra care since, due to the lack of regularizing effect, we have a term of the form $e^{-t} \delta_{\bar{x}}$
	in the solution (see formula \eqref{descomp.intro}). This creates the extra exponential terms in the equation
	satisfied by $z_n$, \eqref{dom1.intro}. We remark again that here there is only convergence along subsequences for which
	the point at which the process starts, $\bar{x}$, satisfies that $\bar{x}\in A_{n_k}$ or $\bar{x}\in A_{n_k}$ for every $n_k$.
	Notice that these two possible limits along subsequences are different (the limits are solutions to two different systems), so
	in general, the full limit does not exists.

\medskip
	
The paper is organized as follows: in Section \ref{sect-u0L2} we analyze the case in which  $u_0 \in L^2(\Omega)$  by proving convergence of the densities and convergence of the processes as $n\to\infty$, and also analyzing the asymptotic behaviour of the densities as $t\to\infty$; in Section \ref{sect-deltas}
	we discuss the convergence via subsequences when $u_0 = \delta_{\bar x}$ and the asymptotic behaviour of $z_n$. 
	Finally in Section \ref{sect-Dirichlet} we include a brief description of the same problem with Dirichlet boundary conditions.

\section{Initial conditions $u_0 \in L^2(\Omega)$.} \label{sect-u0L2}

\subsection{Convergence of the densities.}  \label{sect-densities}
This subsection is dedicated to the proof of Theorem \ref{teo.main.intro}.
We start by showing the following lemma which guarantees that the sequence $u_n$ is uniformly bounded in the $L^2$ norm.

\begin{lemma} \label{lema-3-2-}
Let $u_n$ be the solution of \eqref{evol.intro}. Then there exists a constant $C$ such that
\begin{align}
\norm{u_n}_{L^\infty \pare{0, T: L^2(\Omega)}}\leq C.
\end{align}
\end{lemma}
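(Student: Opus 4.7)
The plan is to prove the bound via a standard $L^2$ energy estimate, exploiting the symmetry of the three kernels. Since $J$, $G$, $R$ are continuous on the compact set $\overline{\Omega}\times\overline{\Omega}$ they are bounded, so the generator $L_n$ is a bounded linear operator on $L^2(\Omega)$ with operator norm controlled independently of $n$. Consequently, for any $u_0\in L^2(\Omega)$ the Cauchy problem \eqref{evol.intro} has a unique solution $u_n\in C^1([0,T];L^2(\Omega))$, and we may legitimately test the equation against $u_n$ itself.

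Multiplying \eqref{evol.intro} by $u_n(t,x)$ and integrating over $\Omega$ gives
\[
\frac{1}{2}\frac{d}{dt}\norm{u_n(t,\cdot)}_{L^2(\Omega)}^2 = \int_\Omega u_n(t,x)\, L_n u_n(t,x)\, dx.
\]
The crux is to show the right-hand side is non-positive via symmetrization. Writing out $L_n$ and grouping the two cross $R$-terms, the integrand splits into three pieces whose weights
\[
\chi_{A_n}(x)\chi_{A_n}(y),\qquad \chi_{B_n}(x)\chi_{B_n}(y),\qquad \chi_{A_n}(x)\chi_{B_n}(y)+\chi_{B_n}(x)\chi_{A_n}(y)
\]
are all symmetric in $(x,y)$. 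Combined with the symmetry of $J$, $G$, and $R$ themselves, the usual swap of variables yields the familiar Dirichlet-form identity
\[
\int_\Omega u_n\, L_n u_n\, dx = -\frac{1}{2}\int_\Omega\!\!\int_\Omega K_n(x,y)\bigl(u_n(t,y)-u_n(t,x)\bigr)^2 dy\,dx \;\le\; 0,
\]
where $K_n(x,y)\ge 0$ is the combined symmetric kernel.

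Therefore $\frac{d}{dt}\norm{u_n(t,\cdot)}_{L^2(\Omega)}^2\le 0$, so integrating in time gives $\norm{u_n(t,\cdot)}_{L^2(\Omega)}\le\norm{u_0}_{L^2(\Omega)}$ uniformly in $t\in[0,T]$ and $n\in\mathbb{N}$, which is the desired bound with $C=\norm{u_0}_{L^2(\Omega)}$. There is no real obstacle here; the only point that needs a moment's care is that the two cross-interaction $R$-terms are not individually symmetric in $(x,y)$, but their sum is, which is exactly what is needed to close the symmetrization.
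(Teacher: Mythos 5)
Your proof is correct and follows essentially the same route as the paper: testing the equation against $u_n$, using the symmetry of $J$, $G$, $R$ together with the symmetry of the combined weight $\chi_{A_n}(x)\chi_{A_n}(y)$, $\chi_{B_n}(x)\chi_{B_n}(y)$, $\chi_{A_n}(x)\chi_{B_n}(y)+\chi_{B_n}(x)\chi_{A_n}(y)$ to obtain the nonpositive Dirichlet-form identity, and concluding that $t\mapsto\norm{u_n(t,\cdot)}_{L^2(\Omega)}$ is nonincreasing, so $C=\norm{u_0}_{L^2(\Omega)}$ works uniformly in $n$. If anything, your symmetrization is written more carefully than the paper's displayed estimate, and your preliminary remark on well-posedness (bounded generator on $L^2$) cleanly justifies the formal manipulation.
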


\begin{proof}
To prove the uniform bound we just multiply by $u_n$ both sides of \eqref{evol.intro} and integrate in $\Omega$ and in $[0,T]$ to obtain
\begin{align}\label{wq}
\begin{array}{l}
\displaystyle 
\frac12\int_\Omega (u_n)^2 (T,x) dx -  
\frac12\int_\Omega (u_0)^2 (x) dx = \int_0^T \int_\Omega   {L}_n u_n(t,x) u_n (t,x) dx dt  \\[10pt]
\qquad \displaystyle 
= \int_0^T \int_\Omega \chi_{A_n}\pare{x}\int_\Omega \chi_{A_n}\pare{y}  J\pare{x,y}\pare{u_n\pare{t,y}-u_n\pare{t,x}}dy u_n (t,x) 
dx dt 
\\[10pt]
\qquad \displaystyle 
\qquad + \int_0^T \int_\Omega \chi_{A_n}\pare{x}\int_\Omega \chi_{B_n}\pare{y}R\pare{x,y}\pare{u_n\pare{t,y}- u_n\pare{t,x}}dy
u_n (t,x) 
dx dt 
\\[10pt]
\qquad \displaystyle 
\qquad + \int_0^T \int_\Omega \chi_{B_n}\pare{x}\int_\Omega \chi_{B_n}\pare{y}G\pare{x,y}\pare{u_n\pare{t,y}- u_n\pare{t,x}}dy
u_n (t,x) 
dx dt 
\\[10pt]
\qquad \displaystyle 
\qquad + \int_0^T \int_\Omega \chi_{B_n}\pare{x}\int_\Omega \chi_{A_n}\pare{y}R\pare{x,y}\pare{u_n\pare{t,y}- u_n\pare{t,x}}dy
u_n (t,x) 
dx dt
\\[10pt]
\qquad \displaystyle 
\leq  - \int_0^T \int_{\Omega \times \Omega} (1- \chi_{B_n}\pare{x}
\chi_{B_n}\pare{y} ) \pare{J(x,y)+G(x, y)+2R(x, y)} \pare{u_n\pare{t,y}-u_n\pare{t,x}}^2 dx dy dt \leq 0,
\end{array}
\end{align}
and hence the $L^2-$norm of the solution is decreasing in time and the result follows. 
\end{proof}

Now, consider $$a_n(t,x):=\chi_{A_n} (x) u_n(t,x) \qquad \mbox{ and } \qquad b_n(t,x):=\chi_{B_n} (x) u_n(t,x).$$
We are ready to proceed with the proof of Theorem \ref{teo.main.intro}.

\begin{proof}[Proof of Theorem \ref{teo.main.intro}] By Lemma \eqref{lema-3-2-} we can extract a weakly convergent subsequence of $a_n( t,x)$ and $b_n( t,x)$ that for simplicity of notation we index again with $n$. We call $a$ and $b$ their respective weak limits.

Take a smooth function $\phi$ such that $\phi(T,\cdot)\equiv 0$ and consider equation \eqref{evol.intro}. 
Multiply both sides by $\chi_{B_n}\pare{x}\phi\pare{ t,x}$ and then integrate respect to the variables $x$
and $t$. Since by construction $\phi( T,\cdot)\equiv 0$, integrating by parts we obtain
\begin{align}
-\int_0^T\int_\Omega &\frac{\partial \phi}{\partial t}( t,x)b_n\pare{ t,x}\, dxdt-\int_\Omega \chi_{B_n}\pare{x}u_0\pare{x}\phi\pare{0,x}\, dx\\[10pt]
&\hspace{-25pt}=\int_0^T\int_\Omega\int_{\Omega}\chi_{B_n}\pare{x}\chi_{B_n}\pare{y} G\pare{x,y}\pare{u_n\pare{t,y}-u_n\pare{t,x}}\phi\pare{t,x}dydxdt\\
&+\int_0^T\int_\Omega\int_{\Omega}\chi_{B_n}\pare{x}\chi_{A_n}\pare{y}R\pare{x,y}\pare{u_n\pare{t,y}-u_n\pare{t,x}}\phi\pare{t,x}dydxdt\\
&\hspace{-25pt}=\int_0^T\int_\Omega \int_\Omega G\pare{x, y}\chi_{B_n}\pare{x}b_n\pare{t,y}\phi\pare{ t,x}\, dydxdt-\int_0^T\int_\Omega \int_\Omega G\pare{x, y}\chi_{B_n}\pare{y}b_n\pare{ t,x}\phi\pare{ t,x}\, dydxdt\\[10pt]
&+\int_0^T\int_\Omega \int_\Omega R\pare{x, y}\chi_{B_n}\pare{x}a_n\pare{ t,y}\phi\pare{ t,x}\, dydxdt-\int_0^T\int_\Omega \int_\Omega R\pare{x, y}\chi_{A_n}\pare{y}b_n\pare{ t,x}\phi\pare{ t,x}\, dydxdt.
\end{align}
Since $$\chi_{A_n}\pare{\cdot}\xrightarrow[n\to +\infty]{}X\pare{\cdot} \qquad \mbox{ and } \qquad \chi_{B_n}\pare{\cdot}\xrightarrow[n\to +\infty]{}1-X\pare{\cdot}$$ weakly in $L^2\pare{\Omega \times (0,T)}$, we obtain the following limits 
\begin{align}\label{formula1}
\int_0^T\int_\Omega \frac{\partial \phi}{\partial t} \pare{ t,x}b_n\pare{ t,x}\, dxdt
\xrightarrow[n\to +\infty]{}\int_0^T\int_\Omega \frac{\partial \phi}{\partial t} \pare{ t,x}b\pare{t,x}\, dxdt,
\end{align}
\begin{align}\label{formula2}
\int_\Omega \chi_{B_n}\pare{x}u_0\pare{x}\phi\pare{0,x}\, dx\xrightarrow[n\to +\infty]{}\int_\Omega\pare{1-X\pare{x}}u_0(x)\phi\pare{0,x}\, dx.
\end{align}

Now, as we assumed that $G(x,y)$ is continuous, we have that
\begin{align}\label{formula3}
h_n(y) = \int_\Omega  G(x,y) \chi_{B_n} (x) \phi\pare{ t,x} \, dx   
\xrightarrow[n\to +\infty]{} \int_\Omega  G(x,y) \pare{1-X(x)} \phi \pare{t,x} \, dx 
\end{align}
uniformly in $y$. Therefore, we get
\begin{align}\label{formula4}
\int_0^T\int_\Omega \int_\Omega G\pare{x, y}\chi_{B_n}\pare{x}b_n\pare{t,y}\phi\pare{t,x}\, dydxdt\xrightarrow[n\to +\infty]{}\int_0^T\int_\Omega 
\int_\Omega G\pare{x, y}\pare{1-X\pare{x}}b\pare{t,y}\phi(t,x)\, dydxdt,
\end{align}
and, arguing similarly,
\begin{align}\label{formula5}
\begin{aligned}
\displaystyle
\int_0^T\int_\Omega \int_\Omega G\pare{x, y}\chi_{B_n}\pare{y}b_n\pare{t,x}\phi\pare{t,x}\, dydxdt
\xrightarrow[n\to +\infty]{}
\int_0^T\int_\Omega  \int_\Omega G\pare{x, y}\pare{1-X\pare{y}}b\pare{t,x}\phi\pare{t,x}\, dydxdt,\\
\int_0^T\int_\Omega \int_\Omega R\pare{x, y}\chi_{B_n}\pare{x}a_n\pare{t,y}\phi\pare{t,x}\, dydxdt
\xrightarrow[n\to +\infty]{}
\int_0^T\int_\Omega  \int_\Omega R\pare{x, y}\pare{1-X\pare{x}}a\pare{t,y}\phi\pare{t,x}\, dydxdt,\\
\int_0^T\int_\Omega \int_\Omega R\pare{x, y}\chi_{B_n}\pare{y}b_n\pare{t,x}\phi\pare{t,x}\, dydxdt,
\xrightarrow[n\to +\infty]{}
\int_0^T\int_\Omega  \int_\Omega R\pare{x, y}X\pare{y}b\pare{t,x}\phi\pare{t,x}\, dydxdt.
\end{aligned}
\end{align}

Collecting all these limits we conclude that
\begin{align}
-\int_0^T\int_\Omega & \frac{\partial{\phi}}{\partial t} \pare{t,x}b \pare{t,x}dxdt-\int_\Omega \pare{1-X \pare{x}}u_0\pare{x}\phi \pare{0,x}\, dx\\[10pt]
&=\int_0^T\int_\Omega  \int_\Omega G\pare{x, y}\pare{1-X\pare{x}}b\pare{t,y}\phi\pare{t,x}\, dydxdt-\int_0^T\int_\Omega  \int_\Omega G\pare{x, y}\pare{1-X\pare{y}}b\pare{t,x}\phi\pare{t,x}\, dydxdt\\
&\qquad +\int_0^T\int_\Omega  \int_\Omega R\pare{x, y}\pare{1-X\pare{x}}a\pare{t,y}\phi\pare{t,x}\, dydxdt-\int_0^T\int_\Omega  \int_\Omega R\pare{x, y}X\pare{y}b\pare{t,x}\phi\pare{t,x}\, dydxdt.
\end{align}
Since this holds for every $\phi$, we conclude that $b\pare{t,x}$ is a solution to
\begin{align}
\begin{cases}
& \displaystyle
\frac{\partial b}{\partial t}\pare{t,x}=\int_\Omega G\pare{x, y}\cor{\pare{1-X(x)}b\pare{t,y}-\pare{1-X(y)}b\pare{t,x}}dy+\int_\Omega 
R(x,y)\cor{\pare{1-X(x)}a(t,y)-X(y)b(t,x)}dy\\[10pt]
&b\pare{x, 0}=\pare{1-X\pare{x}}u_0\pare{x}.
\end{cases}
\end{align}
In a similar way we get the equation for $a(x,t)$ and this concludes the proof of Theorem \ref{teo.main.intro}.
\end{proof}

{
Next, let us prove a corrector result. We proceed as in \cite{nosotros, marc3} setting the corrector as 
$$
\omega_n(t,x) = \dfrac{\chi_{A_n}(x) a(t,x)}{X(x)} + \dfrac{\chi_{B_n}(x) b(t,x)}{1-X(x)} \quad (t,x) \in [0,T] \times \Omega.
$$

\begin{corollary}
Under the conditions of Theorem \ref{teo.main.intro}, we have for each $t \in [0,T]$ that  
\begin{equation} \label{n630}
	\left\|u_n(t,\cdot) - \omega_n(t,\cdot) \right\|_{L^2(\Omega)} \to 0 \quad \textrm{ as } n \to +\infty.
\end{equation}
\end{corollary}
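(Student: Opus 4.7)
Set $v_n := u_n - \omega_n$, $\alpha := a/X$ and $\beta := b/(1-X)$, so that $\omega_n = \chi_{A_n}\alpha + \chi_{B_n}\beta$. The initial conditions in \eqref{sys1.intro} give $\omega_n(0,\cdot) = \chi_{A_n} u_0 + \chi_{B_n} u_0 = u_0$, hence $v_n(0,\cdot) \equiv 0$. Since $\partial_t u_n = L_n u_n$, we have $\partial_t v_n = L_n v_n - r_n$ with residual $r_n := \partial_t\omega_n - L_n\omega_n$. The dissipativity already used in the proof of Lemma \ref{lema-3-2-}, namely $(w, L_n w)_{L^2(\Omega)} \le 0$ for every $w$, yields
$$ \tfrac{1}{2}\tfrac{d}{dt}\|v_n(t,\cdot)\|_{L^2(\Omega)}^2 \le \|v_n(t,\cdot)\|_{L^2(\Omega)}\|r_n(t,\cdot)\|_{L^2(\Omega)}, $$
and a standard time integration delivers $\|v_n(t,\cdot)\|_{L^2(\Omega)} \le \int_0^t \|r_n(s,\cdot)\|_{L^2(\Omega)}\,ds$. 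The corollary therefore reduces to showing $\int_0^T \|r_n(s,\cdot)\|_{L^2(\Omega)}\,ds \to 0$.

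Using the system \eqref{sys1.intro} for $(a,b)$ together with the algebraic identities $\chi_{A_n} + \chi_{B_n} = 1 = X + (1-X)$, a direct calculation (writing $\psi_n := \chi_{A_n} - X$) shows that
$$ r_n(t,x) = -\int_\Omega \psi_n(y) J(x,y)\bigl[\alpha(t,y) - \alpha(t,x)\bigr] dy + \int_\Omega \psi_n(y) R(x,y)\bigl[\beta(t,y) - \alpha(t,x)\bigr] dy \quad \text{for } x \in A_n, $$
and an analogous formula (involving $G$ and $R$, with $\alpha$ and $\beta$ swapped) holds for $x \in B_n$. Under the standing assumption on $X$ (strengthened, as is usual, so that $\alpha,\beta \in L^\infty(0,T; L^2(\Omega))$), each summand of $r_n$ takes the form $\int_\Omega \psi_n(y) K(x,y) g(t,y)\,dy$ or $g(t,x)\int_\Omega \psi_n(y) K(x,y)\,dy$ with $K \in \{J, R, G\}$ continuous and $g \in \{\alpha,\beta\}$.

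Since $\psi_n \rightharpoonup 0$ weakly in $L^\infty$ and each kernel $K$ is continuous on the compact $\overline{\Omega}\times\overline{\Omega}$, the family $\{x \mapsto \int_\Omega \psi_n(y) K(x,y) g(t,y)\,dy\}_n$ is equicontinuous (from uniform continuity of $K$) and uniformly bounded in $x$, and converges to $0$ pointwise in $x$. Arzela--Ascoli then upgrades this to uniform convergence in $x$, giving $\|r_n(t,\cdot)\|_{L^2(\Omega)} \to 0$ for each $t$; a uniform bound on $\|r_n(t,\cdot)\|_{L^2(\Omega)}$ in both $n$ and $t$, inherited from the a priori bounds on $\alpha$ and $\beta$, then allows the bounded convergence theorem to conclude $\int_0^T \|r_n(s,\cdot)\|_{L^2(\Omega)}\,ds \to 0$, which yields \eqref{n630}.

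The main obstacle is precisely this last compactness step: the mere weak convergence $\chi_{A_n} \rightharpoonup X$ in $L^\infty$ is a priori too weak to control the residual $r_n$ in any strong norm. It is the smoothness of the three kernels $J$, $G$, $R$ that converts weak convergence in $y$ into uniform, and hence strong $L^2$, convergence in $x$ -- the same nonlocal-to-local mechanism already exploited in the proof of Theorem \ref{teo.main.intro}.
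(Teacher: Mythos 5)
Your argument is correct, but it follows a genuinely different route from the paper. The paper does not work with a residual at all: it writes $u_n$ through the variation-of-constants (Duhamel) formula, expands $\|u_n(t,\cdot)-\omega_n(t,\cdot)\|_{L^2(\Omega)}^2=\|u_n\|^2-2(u_n,\omega_n)+\|\omega_n\|^2$, and passes to the limit in each of the three terms separately, using the strong $L^\infty(\Omega)$ convergence of quantities such as $e^{-t m_n^i(\cdot)}$ and $\Phi_n^i(t,\cdot)$ (the same smooth-kernel mechanism you invoke) together with the weak convergence of $a_n,b_n$, so that all three terms converge to $\int_\Omega\{a^2/X+b^2/(1-X)\}\,dx$. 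Your proof is instead a consistency-plus-stability argument: $v_n=u_n-\omega_n$ vanishes at $t=0$, the dissipativity $(w,L_nw)_{L^2}\le 0$ gives $\|v_n(t,\cdot)\|_{L^2(\Omega)}\le\int_0^t\|r_n(s,\cdot)\|_{L^2(\Omega)}\,ds$, and the residual $r_n$ is shown to vanish strongly because the continuous kernels turn the weak convergence $\chi_{A_n}\rightharpoonup X$ into convergence uniform in $x$ (your residual formulas on $A_n$ and $B_n$ check out against system \eqref{sys1.intro}). What each approach buys: yours is quantitative and yields convergence uniformly in $t\in[0,T]$ (indeed in $L^\infty(0,T;L^2(\Omega))$), whereas the paper's conclusion is for each fixed $t$; on the other hand, yours needs two ingredients you should state explicitly, namely that $(a,b)$ is $C^1([0,T];L^2(\Omega))$ so that $\partial_t\omega_n$ and hence $r_n$ make sense (this is automatic since the limit system is a linear ODE in $L^2\times L^2$ with bounded operators), and that $\alpha=a/X$ and $\beta=b/(1-X)$ lie in $L^\infty(0,T;L^2(\Omega))$, which in effect asks $X$ to stay away from $0$ and $1$; the paper's proof needs the same kind of integrability (its limit involves $a^2/X$ and $b^2/(1-X)$ and the corrector itself divides by $X$ and $1-X$), so this is a shared caveat rather than a gap specific to your argument.
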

\begin{proof}
First, we observe that we can use the variation of constants formula to write the solutions of \eqref{evol.intro} as 
\begin{eqnarray} \label{solfvc}
u_n(t,x) & = & e^{-m_n(x) t} u_0(x) + \int_0^t e^{-m_n(x)(t-s)} \int_\Omega H_n(x,y) u_n(s,y) dy ds
\end{eqnarray}
where 
$$
\begin{gathered}
m_n(x) = \chi_{A_n}(x) \int_\Omega \left( \chi_{A_n}(y) J(x,y) + \chi_{B_n}(y) R(x,y) \right) dy 
+ \chi_{B_n}(x) \int_\Omega \left( \chi_{B_n}(y) G(x,y) + \chi_{A_n}(y) R(x,y) \right) dy  \\
\textrm{ and } \quad 
H_n(x,y) = \chi_{A_n}(x) \left( \chi_{A_n}(y) J(x,y) + \chi_{B_n}(y) R(x,y) \right) + \chi_{B_n}(x) \left( \chi_{B_n}(y) G(x,y) + \chi_{A_n}(y) R(x,y) \right).
\end{gathered}
$$
Also, we have that
\begin{equation} \label{ncor}
\left\|u_n(t,\cdot) - \omega_n(t,\cdot) \right\|_{L^2(\Omega)}^2 = \| u_n(t,\cdot) \|_{L^2(\Omega)}^2 - 2 \int_\Omega u_n(t,x) \, \omega_n(t,x) \, dx 
+ \| \omega_n(t,\cdot) \|_{L^2(\Omega)}^2.
\end{equation}
We will obtain the result passing to the limit in each term of \eqref{ncor}.
Now, in order to do that, we need to study the sequence of functions $\{ e^{-m_n(\cdot) t} \}_{n\in \N} \subset L^\infty(\Omega)$ and $\{ \int_\Omega H_n(\cdot,y) u_n(t,y) \, dy \}_{n \in \N} \subset L^\infty(\Omega)$ as $n \to + \infty$. 

First, let us consider $\{ e^{-m_n(x) t} \}_{n\in \N}$. For all $\varphi \in L^1(\Omega)$ one has 
\begin{eqnarray*}
\int_\Omega \varphi(x) e^{-m_n(x) t} dx & = & \displaystyle \int_\Omega \varphi(x) e^{- t \Big( \chi_{A_n}(x) \int_\Omega \left\{ \chi_{A_n}(y) J(x,y) + \chi_{B_n}(y) R(x,y) \right\} dy \Big)} e^{-t \Big(\chi_{B_n}(x) \int_\Omega \left\{ \chi_{B_n}(y) G(x,y) + \chi_{A_n}(y) R(x,y) \right\} dy \Big)}dx \\
 & = &  \displaystyle \int_\Omega \chi_{A_n}(x) \, \varphi(x) \, e^{- t \Big( \int_\Omega \left\{ \chi_{A_n}(y) J(x,y) + \chi_{B_n}(y) R(x,y) \right\} dy \Big)} dx \\
& &  \displaystyle \quad + \int_\Omega \chi_{B_n}(x) \, \varphi(x) \, e^{-t \Big(\int_\Omega \left\{ \chi_{B_n}(y) G(x,y) + \chi_{A_n}(y) R(x,y) \right\} dy \Big)}dx \\
 & = &  \displaystyle \int_\Omega \chi_{A_n}(x) \, \varphi(x) \, e^{- t m_n^1(x)} dx + \int_\Omega \chi_{B_n}(x) \, \varphi(x) \, e^{-t m_n^2(x)}dx
\end{eqnarray*}
where
$$
\begin{gathered}
m_n^1(x) = \int_\Omega \left\{ \chi_{A_n}(y) J(x,y) + \chi_{B_n}(y) R(x,y) \right\} dy \quad \textrm{ and } \quad
m_n^2(x) = \int_\Omega \left\{ \chi_{B_n}(y) G(x,y) + \chi_{A_n}(y) R(x,y) \right\} dy. 
\end{gathered}
$$

Notice that, for each $x \in \Omega$, we have
\begin{equation} \label{eq770}
\begin{gathered}
m_n^1(x)
\to m^1(x) := \int_\Omega \left\{  X(y) J(x,y) + (1- X(y)) R(x,y) \right\} dy  \\
\quad \textrm{ and } \quad \\
m_n^2(x)
\to  m^2(x) := \int_\Omega \left\{ (1 - X(y)) G(x,y) + X(y) R(x,y) \right\} dy
\end{gathered}
\end{equation}
as $n \to \infty$. 
Hence, from \cite[Proposition 2.1]{marc3} we obtain that 
\begin{equation} \label{eq780}
\begin{gathered}
e^{- t m_n^1(x)}  
\to e^{ -t m^1(x)} \quad \textrm{ and } \quad 
e^{-t m^2_n(x))} 
\to e^{-t m^2(x)}
\end{gathered}
\end{equation}
strongly in $L^\infty(\Omega)$ for each $t \in [0,T]$.

In particular, for all $\varphi \in L^1(\Omega)$, one has 
\begin{eqnarray*}
\int_\Omega \varphi(x) e^{-m_n(x) t} dx & \to & \int_\Omega X(x) \, \varphi(x) \, e^{ -t \Big( \int_\Omega \left\{  X(y) J(x,y) + (1- X(y)) R(x,y) \right\} dy \Big) } dx \\
& & \quad + \int_\Omega (1-X(x)) \, \varphi(x) \, e^{-t \Big( \int_\Omega \left\{ (1 - X(y)) G(x,y) + X(y) R(x,y) \right\} dy \Big)} dx.
\end{eqnarray*}

On the other hand, we can write 
$$
\int_\Omega H_n(x,y) u_n(t,y) \, dy  = \chi_{A_n}(x) \Phi_n^1(t,x) + \chi_{B_n}(x) \Phi_n^2(t,x)
$$
where 
\begin{equation} \label{eq810}
\begin{gathered} 
\Phi_n^1(t,x) =  \int_\Omega \left\{ \chi_{A_n}(y) J(x,y) + \chi_{B_n}(y) R(x,y) \right\}  u_n(t,y) \, dy \\ \quad \textrm{ and } \\
\Phi_n^2(t,x) =  \int_\Omega \left\{ \chi_{B_n}(y) G(x,y) + \chi_{A_n}(y) R(x,y) \right\}  u_n(t,y) \, dy.
\end{gathered}
\end{equation}
We can argue as in \eqref{eq780} to obtain 
\begin{equation} \label{eq810.99}
\begin{gathered}
\Phi_n^1(t,x)
\to \Phi^1(t,x) := \int_\Omega \left\{ a(t,y) \, J(x,y) + b(t,y) \, R(x,y) \right\} dy \\ 
\quad 
\textrm{ and } \\
\Phi_n^2(t,x)
\to \Phi^2(t,x) := \int_\Omega \left\{ b(t,y) \, G(x,y) + a(t,y) \, R(x,y) \right\} dy
\end{gathered}
\end{equation}
strongly in $L^\infty(\Omega)$ for each $t \in [0,T]$.

Now, let us pass to the limit in $\| u_n(t,\cdot) \|_{L^2(\Omega)}$. Due to \eqref{solfvc}, we get from \eqref{eq770} and \eqref{eq810.99} that 
\begin{eqnarray*}
&&  \int_{\Omega} u_n^2(t,x) \,dx  = 
\displaystyle
\int_{\Omega} u_n(t,x) \left( e^{-m_n(x) t} \, u_0(x) 
+   \int_{0}^t e^{-m_n(x) (t-s)} \int_{\Omega} H_n(x,y) \, u_n(s,y) \, dy ds \right) dx  \\
& &  \qquad = 
\displaystyle
\int_\Omega \chi_{A_n}(x) \, u_n(t,x) \, e^{- t m_n^1(x)} u_0(x) \, dx + \int_\Omega \chi_{B_n}(x) \, u_n(t,x) \, e^{-t m_n^2(x)} u_0(x) \, dx \\
& & \qquad \qquad + \displaystyle
\int_0^t  \int_\Omega \chi_{A_n}(x) \, u_n(t,x) \, e^{- t m_n^1(x)} 
\Phi_n^1(s,x) \, dx ds  + \int_0^t \int_\Omega \chi_{B_n}(x) \, u_n(t,x) \, e^{-t m_n^2(x)} 
\Phi_n^2(s,x) \, dx ds .
\end{eqnarray*}
Consequently, it follows from Theorem \ref{teo.main.intro}, \eqref{eq780} and \eqref{eq810.99} that 
\begin{equation} \label{eq850}
\begin{array}{l}
\| u_n(t,\cdot) \|_{L^2(\Omega)}^2  \to   
\displaystyle
\int_\Omega a(t,x) \, e^{- t m^1(x)} u_0(x) \, dx + \int_\Omega b(t,x) \, e^{-t m^2(x)} u_0(x) \, dx \\[10pt]
 \qquad \qquad + \displaystyle
\int_0^t  \int_\Omega a(t,x) \, e^{- t m^1(x)} \Phi^1(s,x) \, dx ds 
+ \int_0^t \int_\Omega b(t,x) \, e^{-t m_n^2(x)} \Phi^2(s,x) \, dx ds \\[10pt]
 \qquad = \displaystyle \int_\Omega \frac{a(t,x)}{X(x)} \left[  (X u_0)(x) e^{-m^1(x) t} 
 \displaystyle + \int_0^t e^{-m^1(x) (t-s)} \left\{ X(x) J(x,y) + (1-X(x)) R(x,y) \right\} dy ds \right] dx 
  \\[10pt]  \displaystyle \qquad  \qquad
 +\int_\Omega \frac{b(t,x)}{1-X(x)} \left[  ((1-X) u_0)(x) e^{-m^2(x) t} 
 + \int_0^t e^{-m^2(x) (t-s)} \left\{ (1-X)(x) G(x,y) + X(x) R(x,y) \right\} dy ds \right] dx  \\[10pt]
 \qquad  =  \displaystyle \int_\Omega \left\{ \frac{a^2(t,x)}{X(x)} +  \frac{b^2(t,x)}{1-X(x)}  \right\} dx,
\end{array}
\end{equation}
since we have that 
$$
\begin{gathered}
a(t,x) = (X u_0)(x) e^{-m^1(x) t} + \int_0^t e^{-m^1(x) (t-s)} \left\{ X(x) J(x,y) + (1-X(x)) R(x,y) \right\} dy ds \\
\quad \textrm{ and } \quad \\
b(t,x) = ((1-X) u_0)(x) e^{-m^2(x) t} + \int_0^t e^{-m^2(x) (t-s)} \left\{ (1-X)(x) G(x,y) + X(x) R(x,y) \right\} dy ds.
\end{gathered}
$$

Finally, let us pass to the limit in the other terms of \eqref{ncor}. One can see that 
\begin{eqnarray*} \label{eq870}
\int_\Omega u_n(t,x) \, \omega_n(t,x) \, dx 
 =  \int_\Omega \left\{ \chi_{A_n}(x) u_n(t,x) \frac{a(t,x)}{X(x)} + \chi_{B_n}(x) u_n(t,x) \frac{b(t,x)}{1-X(x)} \right\} dx  \to  \int_\Omega \left\{  \frac{a^2(t,x)}{X(x)} + \frac{b^2(t,x)}{1-X(x)} \right\} dx 
\end{eqnarray*}
and 
\begin{eqnarray*} \label{eq880}
\int_\Omega \omega^2_n(t,x) \, dx 
 = \int_\Omega \left\{ \chi_{A_n}(x) \frac{a^2(t,x)}{X^2(x)} + \chi_{B_n}(x) \frac{b^2(t,x)}{(1-X(x))^2} \right\} dx 
 \to  \int_\Omega \left\{  \frac{a^2(t,x)}{X(x)} + \frac{b^2(t,x)}{1-X(x)} \right\} dx, \qquad \textrm{ as } n \to +\infty.
\end{eqnarray*}
Hence, we can conclude that 
$$
\lim_{n \to +\infty} \left\|u_n(t,\cdot) - \omega_n(t,\cdot) \right\|_{L^2(\Omega)}^2 = 0
$$
proving the result.
\end{proof}

}

\subsection{Convergence of the stochastic process.} \label{Sect.proc}
In this subsection we  prove Theorem \ref{Teo.limit.process}.
Our first goal is to show that $Y_n(t)$ has a probability density $u_n( t,x)$ which is the unique solution to system \eqref{evol.intro}.
To this end we will prove uniqueness of weak solutions to \eqref{evol.intro}.

\begin{lemma}\label{lemma:1}
Let $u_0\in L^2\pare{\Omega}$. There exists a unique solution $u_n\pare{x,t}\in L^2\pare{\Omega\times [0, T]}$ of system 
\begin{equation}\label{evol.intro12}
\left\{
\begin{array}{ll}
\displaystyle \frac{\partial u_n}{\partial t} (t,x) = {L}_n u_n(t,x), \qquad & \, t>0, \, x\in \Omega, \\[10pt]
u_n(0,x)=u_0(x), \qquad & x\in \Omega.
\end{array}
\right.
\end{equation}
\end{lemma}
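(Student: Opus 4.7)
The key observation is that $L_n$ is a bounded linear operator from $L^2(\Omega)$ to itself. Indeed, rewriting
$$
L_n f(x) = -m_n(x)\, f(x) + \int_\Omega H_n(x,y)\, f(y)\, dy,
$$
with $m_n$ and $H_n$ as defined later in the proof of the Corollary, we see that $0 \le m_n(x) \le 1$ (since $\int V(x,y)\,dy = 1$ for each of $V = J,G,R$) and $\|H_n\|_\infty \le \max\{\|J\|_\infty,\|G\|_\infty,\|R\|_\infty\} =: M$. Thus the Cauchy problem \eqref{evol.intro12} is a linear ODE in the Banach space $L^2(\Omega)$, to which standard arguments apply. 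I will split the proof into existence (via Duhamel and Banach fixed point) and uniqueness (via an energy estimate reusing \eqref{wq}).

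\textbf{Existence.} I will rewrite the equation in its mild form
$$
u_n(t,x) = e^{-m_n(x) t} u_0(x) + \int_0^t e^{-m_n(x)(t-s)} \int_\Omega H_n(x,y)\, u_n(s,y)\, dy\, ds \;=:\; \mathcal{T}(u_n)(t,x),
$$
and view $\mathcal{T}$ as a map on $X_\tau := C([0,\tau]; L^2(\Omega))$ for small $\tau>0$. Using $|e^{-m_n(x)(t-s)}| \le 1$ together with Cauchy--Schwarz in the inner integral, one gets
$$
\|\mathcal{T}(u) - \mathcal{T}(v)\|_{X_\tau} \;\le\; \tau\, M\, |\Omega|^{1/2}\, \|u-v\|_{X_\tau},
$$
so $\mathcal{T}$ is a contraction provided $\tau < (M|\Omega|^{1/2})^{-1}$, yielding a unique fixed point in $X_\tau$. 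Since the contraction constant does not depend on the initial time, I will iterate on successive intervals of length $\tau$ to obtain a global solution on $[0,T]$. Differentiating the Duhamel identity (which is legitimate because the integrand is continuous in $t$ with values in $L^2(\Omega)$) recovers the strong form of \eqref{evol.intro12}.

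\textbf{Uniqueness.} Suppose $u_n^1, u_n^2 \in L^2(\Omega \times [0,T])$ both solve \eqref{evol.intro12} with the same initial datum, and set $w := u_n^1 - u_n^2$. Then $\partial_t w = L_n w$ with $w(0,\cdot) \equiv 0$. Multiplying by $w$ and integrating over $\Omega$, the computation carried out in the proof of Lemma \ref{lema-3-2-} (applied to $w$ in place of $u_n$ and using the symmetry of the kernels and the algebraic identity exploited in \eqref{wq}) yields
$$
\frac{1}{2}\frac{d}{dt}\int_\Omega w^2(t,x)\, dx \;\le\; 0.
$$
Combined with $w(0,\cdot) = 0$, this forces $w(t,\cdot) \equiv 0$ for every $t \in [0,T]$.

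\textbf{Main obstacle.} There is no serious obstacle: because the kernels are bounded and the domain is bounded, $L_n$ is a bounded operator on $L^2(\Omega)$ and everything reduces to a textbook ODE-in-a-Banach-space argument. The only minor point requiring attention is verifying that the mild solution produced by the fixed-point argument is indeed a weak solution of \eqref{evol.intro12} in the sense understood in the paper; this follows directly from differentiating the Duhamel formula and using the uniform $L^\infty$ bound on $H_n$.
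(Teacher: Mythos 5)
Your proposal is correct and follows essentially the same route as the paper: a Banach fixed-point argument on a short time interval, iterated up to $T$ (the paper contracts the directly time-integrated map $u \mapsto u_0 + \int_0^t L_n u\, ds$ in the sup-in-time $L^2$ norm, while you contract the Duhamel form and add a redundant but valid energy-based uniqueness step that reuses the dissipativity computation of Lemma \ref{lema-3-2-}). Two harmless slips: $m_n$ is in general only bounded by $2$, not $1$ (but only $m_n \ge 0$ and boundedness are used), and the contraction constant in $C([0,\tau];L^2(\Omega))$ should be $\tau M |\Omega|$ rather than $\tau M |\Omega|^{1/2}$; neither affects the argument.
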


\begin{proof}
Fix $t_0\in [0, T]$ and consider the space $L^2\pare{\Omega\times [0, t_0]}$ endowed with the norm $\norm{\cdot}_{2, \infty}$ defined as
\begin{align}
\norm{f}_{2,\infty}=\sup_{t\in [0, t_0]}\int_{\Omega}  [f(t,x)]^2dx
\end{align}
and $L^\infty\pare{\Omega\times [0, t_0]}$ with the norm
$$\|f\|_{\infty,\infty}:=\sup\Big\{ \abs{f(s,x)}, \, x\in\Omega, s\in[0,t]  \Big\}.$$
We let $\Phi$ the map defined on $u\in L^2\pare{\Omega\times [0, t_0]}$ as
\begin{align}\label{wast}
\begin{aligned}
\Phi\pare{u(t,x)}=&u_0(x) +  \int_0^t\chi_{A_n}\pare{x}\cor{\int_{\Omega}\chi_{A_n}\pare{y} J\pare{x,y}\pare{u\pare{s,y}-u\pare{s,x}}dy+\int_\Omega \chi_{B_n}\pare{y}R\pare{x,y}\pare{u\pare{s,y}-u\pare{s,x}}dy}ds\\
&+\int_0^t\chi_{B_n}\pare{x}\cor{\int_{\Omega}\chi_{B_n}\pare{y} G\pare{x,y}\pare{u\pare{s,y}-u\pare{s,x}}dy+\int_\Omega \chi_{A_n}\pare{y}R\pare{x,y}\pare{u\pare{s,y}-u\pare{s,x}}dy}ds.
\end{aligned}
\end{align}
For every $u, v\in L^2\pare{\Omega\times [0, t_0]}$ it holds that
\begin{align}\label{eqr1}
\abs{\Phi\pare{u(t,x)}-\Phi\pare{v(t,x)}}&\leq\Big(\norm{J}_\infty+2\norm{R}_\infty+\norm{G}_\infty  \Big)\int_0^t\int_{\Omega}\pare{\abs{u(s,y)-v(s,y)}+\abs{u(s,x)-v(s,x)}}dyds.
\end{align}
Calling $\sqrt{C}=\norm{J}_\infty+2\norm{R}_\infty+\norm{G}_\infty$ we get that
\begin{align}
\begin{aligned}
 \int_\Omega \abs{\Phi\pare{u(t,x)}-\Phi\pare{v(t,x)}}^2 dx 
&\leq C\int_{\Omega}\pare{\int_0^t\int_{\Omega}\pare{\abs{u(s,y)-v(s,y)}+\abs{u(s,x)-v(s,x)}}dyds}^2dx\\
&\leq 2C\int_{\Omega}\pare{\int_0^t\int_{\Omega}\abs{u(s,y)-v(s,y)}dyds}^2dx
+2C\int_{\Omega}\pare{\int_0^t\int_{\Omega}\abs{u(s,x)-v(s,x)}dyds}^2dx.
\end{aligned}
\end{align}
By Cauchy-Schwartz's inequality we obtain
\begin{align*}
\pare{\int_0^t\int_{\Omega}\abs{u(s,y)-v(s,y)}dyds}^2&\leq t\int_0^t\pare{\int_{\Omega}\abs{u(s,y)-v(s,y)}dy}^2ds\\
&\leq t\abs{\Omega}\int_0^t\int_{\Omega}\abs{u(s,y)-v(s,y)}^2dyds\\
&\leq t^2\abs{\Omega}\norm{u-v}_{2, \infty},
\end{align*}
and, analogously,
\begin{align*}
\pare{\int_0^t\int_{\Omega}\abs{u(s,x)-v(s,x)}dyds}^2&\leq t\abs{\Omega}^2\int_0^t\abs{u(s,x)-v(s,x)}^2ds.
\end{align*}
Consequently, by \eqref{eqr1}, we get
\begin{align}
\begin{aligned}
 \int_\Omega \abs{\Phi\pare{u(t,x)}-\Phi\pare{v(t,x)}}^2 dx  
&\leq 2Ct^2\abs{\Omega}^2\norm{u-v}_{2, \infty}+2Ct\abs{\Omega}^2\int_{\Omega}\int_0^t
\abs{u(s,x)-v(s,x)}^2dsdx\\
&\leq 4Ct^2\abs{\Omega}^2\norm{u-v}_{2, \infty}.
\end{aligned}
\end{align}
Therefore by choosing $t_0<\frac{1}{2\abs{\Omega}\sqrt{C}}$ we get that the map $\Phi$ is a contraction in $L^2\pare{ [0, t_0] \times \Omega}$.
By the Banach fixed-point Theorem we can deduce that there exists a unique solution of system \eqref{evol.intro12} in 
$L^2\pare{ [0, t_0] \times \Omega}$. 
We can iterate the previous argument in order to show existence and uniqueness globally in $L^2\pare{[0, T] \times \Omega}$.
\end{proof}

\begin{lemma}\label{lemma:uniqweak}
There exists a unique measure $\nu_t$ solution to 
\begin{align}\label{weak:eq2}
\begin{cases}
\displaystyle \frac{\partial}{\partial t}\int_{\overline\Omega}f(x)\nu_t(dx)=\int_{\overline\Omega} {{L}_n} f(x)\nu_t(dx),&\quad 
 \; t\in [0, T], \, x\in\overline\Omega,
\\[10pt]
\displaystyle\int_{\overline\Omega}f(x)\nu_0(dx)=\int_{\overline\Omega}f(x)u_0(x)dx, &\quad x\in\overline\Omega,
\end{cases}
\end{align}
for every $f\in C\pare{A_n}\cap C(B_n)$.

Such solution is given by $$\nu^n_t(dx)=u_n(t,x)dx,$$ where
$u_n(t,x)$ is the unique solution to \eqref{evol.intro}.
\end{lemma}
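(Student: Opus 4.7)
The plan is to split into existence and uniqueness, with the self-adjointness of $L_n$ as the key algebraic input.

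For \emph{existence}, the candidate is $\nu_t^n(dx) := u_n(t,x)\,dx$, where $u_n$ is the unique strong solution from Lemma \ref{lemma:1}. To verify \eqref{weak:eq2} I would multiply \eqref{evol.intro12} by a test function $f\in C(A_n)\cap C(B_n)$ and integrate, which reduces the matter to the identity $\int f\, L_n u_n\,dx = \int L_n f\cdot u_n\,dx$. Writing $L_n f(x) = \int_\Omega K_n(x,y)(f(y)-f(x))\,dy$ with the kernel
$$
K_n(x,y) := \chi_{A_n}(x)\chi_{A_n}(y)J(x,y) + \chi_{B_n}(x)\chi_{B_n}(y)G(x,y) + \bigl(\chi_{A_n}(x)\chi_{B_n}(y) + \chi_{B_n}(x)\chi_{A_n}(y)\bigr)R(x,y),
$$
I observe that $K_n(x,y)=K_n(y,x)$, the two cross $R$-terms swapping into each other under $x\leftrightarrow y$. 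A standard Fubini/relabel then yields
$$
\int_\Omega f\, L_n u_n\,dx = -\tfrac{1}{2}\iint_{\Omega\times\Omega} K_n(x,y)(f(y)-f(x))(u_n(y)-u_n(x))\,dx\,dy = \int_\Omega L_n f\cdot u_n\,dx,
$$
which combined with $\partial_t u_n = L_n u_n$ and $u_n(0,\cdot)=u_0$ gives precisely \eqref{weak:eq2}.

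For \emph{uniqueness}, I would run a dual-semigroup (backward Kolmogorov) argument. The operator $L_n$ is bounded on the Banach space $\mathcal{X}:=C(A_n)\cap C(B_n)$ endowed with the sup norm: indeed, for $x\in A_n$ one has $L_n f(x)=\int_\Omega(\chi_{A_n}(y)J(x,y)+\chi_{B_n}(y)R(x,y))(f(y)-f(x))\,dy$, which is continuous in $x\in A_n$ by continuity of $J$ and $R$, and symmetrically on $B_n$. Hence the semigroup $e^{sL_n}$ is well-defined on $\mathcal{X}$ via its Neumann series. Suppose $\nu_t, \tilde\nu_t$ are two measure solutions sharing the initial datum, and set $\mu_t:=\nu_t-\tilde\nu_t$. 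For fixed $t\in(0,T]$ and $f\in\mathcal{X}$, define
$$
g(s):=\int_{\overline\Omega} e^{(t-s)L_n}f(x)\,d\mu_s(x),\qquad s\in[0,t].
$$
Since $e^{(t-s)L_n}f\in\mathcal{X}$ is an admissible test function, combining the weak equation for $\mu_s$ with $\partial_s e^{(t-s)L_n}f=-L_n e^{(t-s)L_n}f$ should give $g'(s)=0$. Therefore $\int f\,d\mu_t=g(t)=g(0)=\int e^{tL_n}f\,d\mu_0=0$, and since $\mathcal{X}\supset C(\overline\Omega)$ separates Borel measures on $\overline\Omega$, we conclude $\mu_t\equiv 0$.

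The main obstacle is the rigorous justification of the differentiation of $g$: one needs that $s\mapsto \int\varphi\,d\mu_s$ is absolutely continuous on $[0,t]$ for every $\varphi\in\mathcal{X}$ with derivative given by the weak equation, and that $s\mapsto e^{(t-s)L_n}f$ is strongly differentiable in $\mathcal{X}$. Both follow from the boundedness of $L_n$ on $\mathcal{X}$ together with the weak formulation, but the piecewise-continuous nature of $\mathcal{X}$ across the interface $\partial A_n\cap\Omega$ (which has measure zero by assumption) requires a short verification that $L_n$ sends $\mathcal{X}$ into itself, as sketched above.
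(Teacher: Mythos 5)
Your existence step coincides with the paper's: both take $\nu_t^n(dx)=u_n(t,x)\,dx$ with $u_n$ the solution from Lemma \ref{lemma:1}, and reduce the verification of \eqref{weak:eq2} to the self-adjointness of $L_n$, which you make explicit via the symmetric kernel $K_n(x,y)$ and the usual symmetrization identity. Your uniqueness step, however, follows a genuinely different route. The paper works directly with the difference $\omega_t^n=\nu_t^n-\tilde\nu_t^n$: it integrates the weak equation in time, estimates $\norm{\omega_t^n}_{\text{TV}}\leq \pare{\norm{J}_\infty+2\norm{R}_\infty+\norm{G}_\infty}\pare{\abs{\Omega}+1}\int_0^t\norm{\omega_s^n}_{\text{TV}}\,ds$, and concludes by Gronwall's inequality, without ever constructing a semigroup. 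You instead run the dual (backward Kolmogorov) argument: you build $e^{sL_n}$ on the space of bounded piecewise continuous functions, test $\mu_s=\nu_s-\tilde\nu_s$ against $e^{(t-s)L_n}f$, show the pairing is constant in $s$, and use that $C(\overline\Omega)$ separates Borel measures. Both arguments are correct modulo the same implicit regularity of the measure-valued solution (finite, locally bounded total variation and absolute continuity of $t\mapsto\int\varphi\,d\nu_t$), which the paper also uses tacitly when it passes to the integrated form and applies Gronwall. The paper's route is shorter and needs nothing beyond boundedness of the kernels and of $\abs{\Omega}$; yours requires the extra bookkeeping you yourself flag (invariance of $\mathcal{X}$ under $L_n$, strong differentiability of $s\mapsto e^{(t-s)L_n}f$, a product rule for time-dependent test functions, and a uniform total-variation bound on $\mu_s$ to justify the difference quotients), but in exchange it is the argument that generalizes when a Gronwall bound in total variation is not available, and it exhibits the representation $\int f\,d\nu_t=\int e^{tL_n}f\,d\nu_0$ explicitly.
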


\begin{proof}
The existence of a solution to \eqref{weak:eq2} follows just by taking 
$$\nu_t^n(dx)=u_n(t,x)dx$$ 
where $u_n(t,x)$ is a solution of system \eqref{evol.intro} whose existence is guaranteed by Lemma \ref{lemma:1}
(here we are using that $L_n$ is self-adjoint due to the symmetry of the kernels). Next, we prove the uniqueness.
Suppose that there exist two trajectories of measures $\nu_t^n(dx)$ and $\tilde{\nu}_t^n(dx)$ such that \eqref{weak:eq2} holds. Call 
$$
\omega^n_t(dx)=\nu_t^n(dx)-\tilde{\nu}_t^n(dx).
$$ 
The evolution of $\omega^n_t(dx)$ satisfies equation \eqref{weak:eq2} with initial condition 
$$\int_{\overline\Omega}{ f}(x)\omega^n_0(dx)=0.$$
Therefore, for all $t\in [0, T]$, it holds that
\begin{align}\label{eqg}
\begin{aligned}
\int_{\overline\Omega}&f(x)\omega^n_t(dx)\\
=&\int_0^t\brc{\int_{\overline\Omega}\cor{\chi_{A_n}\pare{x}\int_{\Omega}\chi_{A_n}\pare{y} J\pare{x,y}\pare{f\pare{y}-f\pare{x}}dy+\chi_{B_n}\pare{x}\int_{\Omega}\chi_{B_n}\pare{y} G\pare{x,y}\pare{f\pare{y}-f\pare{x}}dy}\omega^n_s(dx)}ds\\[10pt]
&+\int_0^t\brc{\int_{\overline\Omega}\cor{\chi_{A_n}\pare{x}\int_\Omega \chi_{B_n}\pare{y}R\pare{x,y}\pare{f\pare{y}-f\pare{x}}dy+\chi_{B_n}\pare{x}\int_\Omega \chi_{A_n}\pare{y}R\pare{x,y}\pare{f\pare{y}-f\pare{x}}dy}\omega^n_s(dx)}ds.
\end{aligned}
\end{align}
In what follows, for all measures $\mu$ on $\overline\Omega$, we denote by $$\displaystyle{\norm{\mu}_{\text{TV}}:=\sup_{g\in C\pare{\overline\Omega}: \norm{g}_\infty\leq 1}\int_{\overline\Omega}g(x)\mu(dx)},$$ the dual norm of $\mu$ (total variation). It holds that
\begin{align}
\norm{\omega^n_t\pare{dx}}_{\text{TV}}\leq\pare{\norm{J}_\infty+2\norm{R}_\infty+\norm{G}_\infty}\pare{\abs{\Omega}+1}\int_0^t\norm{\omega^n_s\pare{dx}}_{\text{TV}}ds.
\end{align}
By Gronwell's inequality we can conclude that $\omega^n_t(dx)$ coincides with the null measure and therefore $\nu_t^n(dx)=\tilde \nu_t^n(dx)$. The uniqueness of the solution to system \eqref{weak:eq2} follows.
\end{proof}

By this uniqueness result and Lemma A.1.5.1 of \cite{KL}, we get that the process $Y_n(t)$ has a density. This is the content of the following corollary.

\begin{corollary}  \label{corol.densidad} The process $Y_n(t)$ has a density
that is characterized as the unique solution $u_n(t,x)$ to 
\begin{equation}\label{evol.fffff}
\left\{
\begin{array}{ll}
\displaystyle \frac{\partial u_n}{\partial t} (t,x) = {L}_n u_n(t,x), \qquad & \, t>0, \, x\in \Omega, \\[10pt]
u_n(0,x)=u_0(x), \qquad & x\in \overline\Omega.
\end{array}
\right.
\end{equation}
\end{corollary}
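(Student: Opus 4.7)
The plan is to combine the uniqueness result of Lemma \ref{lemma:uniqweak} with the standard fact (Lemma A.1.5.1 of \cite{KL}) that the one-dimensional marginals of a Markov process with generator $L_n$ satisfy the Kolmogorov forward equation in weak form. More precisely, I would proceed in three short steps as follows.

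First, I would set $\nu_t^n$ to be the law of $Y_n(t)$, that is, $\nu_t^n(E) = P(Y_n(t) \in E)$ for every Borel set $E \subseteq \overline{\Omega}$. Applying Dynkin's formula to the Markov process $Y_n(t)$ with generator $L_n$ as described in \eqref{gen.intro}, and noting that the test functions $f \in C(A_n) \cap C(B_n)$ lie in the domain of $L_n$ since the kernels $J$, $G$, $R$ are bounded and continuous, I would obtain that $(\nu_t^n)_{t\in [0,T]}$ is a measure-valued solution of \eqref{weak:eq2} with initial condition $\nu_0^n(dx) = u_0(x)dx$ (this is the content of the cited Lemma A.1.5.1 of \cite{KL}).

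Second, by Lemma \ref{lemma:1} there exists a (unique) solution $u_n \in L^2([0,T] \times \Omega)$ to the evolution problem \eqref{evol.fffff}. The absolutely continuous measure $\tilde{\nu}_t^n(dx) := u_n(t,x)dx$ is a solution of \eqref{weak:eq2}: indeed, multiplying \eqref{evol.fffff} by a test function $f \in C(A_n) \cap C(B_n)$, integrating over $\Omega$, and using the symmetry of the kernels (so that $L_n$ is self-adjoint in the pairing used in \eqref{weak:eq2}), one recovers the identity in \eqref{weak:eq2}.

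Third, by the uniqueness statement of Lemma \ref{lemma:uniqweak}, the two measure-valued trajectories $\nu_t^n$ and $\tilde{\nu}_t^n$ must coincide. Hence $\nu_t^n(dx) = u_n(t,x)dx$, which means precisely that $Y_n(t)$ has density $u_n(t,\cdot)$. The main (minor) obstacle is the first step: one must ensure that the Dynkin/forward-equation identity really holds in the weak form \eqref{weak:eq2} for the relevant class of test functions, which is why we restrict to $f \in C(A_n) \cap C(B_n)$ and invoke the cited lemma from \cite{KL} rather than trying to re-derive it here; once this is granted, the conclusion is immediate from the uniqueness already proved.
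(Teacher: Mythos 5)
Your proposal is correct and follows essentially the same route as the paper: the authors likewise combine the Dynkin/forward-equation identity from Lemma A.1.5.1 of \cite{KL} (showing the law of $Y_n(t)$ solves \eqref{weak:eq2}) with Lemma \ref{lemma:1} and the uniqueness in Lemma \ref{lemma:uniqweak} (whose existence part is exactly your second step, using the symmetry of the kernels) to identify the law with $u_n(t,x)\,dx$. No gaps to report.
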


Consider now the coupled process $\pare{Y_n\pare{t}, I_n\pare{t}}\in D\pare{[0, T],\overline\Omega}\times D\pare{[0, T],\brc{1,2}}$, where
$$I_n\pare{t}=
\left\{\begin{array}{ll}
1 \qquad \mbox{if } Y_n(t) \in A_n, \\[6pt] 
2 \qquad \mbox{if } Y_n(t) \in B_n.
\end{array} \right.
$$ 
The pair $\pare{Y_n\pare{t}, I_n\pare{t}}$ is a Markov process whose generator $\mathcal{L}_n$ is defined on functions $f\in T_n$, with $$T_n:=\brc{f:\overline \Omega\times\brc{1,2}\to\mathbb R :f(\cdot, 1)\in C(A_n) \text{ and } f(\cdot, 2)\in C(B_n)},$$ as
\begin{equation}\label{gen.intro1}
\mathcal{L}_nf(x, i)=
\left\{
\begin{array}{ll} \displaystyle
\chi_{A_n}\pare{x}\int_{\Omega}\chi_{A_n}\pare{y} J\pare{x,y}\pare{f\pare{y,1}-f\pare{x,1}}dy 
+\chi_{A_n}\pare{x}\int_\Omega \chi_{B_n}\pare{y}R\pare{x,y}\pare{f\pare{y,2}-f\pare{x,1}}dy
&\quad\text{if $i=1$,} \\[10pt] 
\displaystyle \chi_{B_n}\pare{x}\int_\Omega \chi_{B_n}\pare{y}G\pare{x,y}\pare{f\pare{y, 2}-f\pare{x, 2}}dy
+\chi_{B_n}\pare{x}\int_{\Omega}\chi_{A_n}\pare{y} R\pare{x,y}\pare{f\pare{y, 1}-f\pare{x, 2}}dy   & \quad\text{if $i=2$.}
\end{array}
\right.
\end{equation}
By Lemma A.1.5.1 of \cite{KL} we know that, for every bounded function $f\in \overline\Omega\times\brc{1,2}\to \bb R$,
\begin{align}\label{f1}
M_n^f\pare{t}=f\pare{Y_n\pare{t}, I_n\pare{t}}-f\pare{Y_n\pare{0}, I_n\pare{0}}-\int_0^t\mathcal L_nf\pare{Y_n\pare{s}, I_n\pare{s}}ds
\end{align}
and
\begin{align}\label{f2}
N_n^f\pare{t}=\pare{M_n^f\pare{t}}^2-\int_0^t\pare{\mathcal L_n\pare{f\pare{Y_n\pare{s}, I_n\pare{s}}}^2-2f\pare{Y_n\pare{s}, I_n(s)}\mathcal L_nf\pare{Y_n\pare{s} I_n(s)}} ds
\end{align}
are martingales with respect to the natural filtration generated by the process.

Let $P_n\in \mathcal M_1\pare{D\pare{[0, T], \overline\Omega}\times D\pare{[0, T], \brc{1,2}}}$ be 
the law of the process $\pare{Y_n\pare{t}, I_n\pare{t}}_{t\in [0, T]}$; in 
our notation $\mathcal M_1(X)$ denotes the space of probability measures 
on a metric space $X$. The next lemma guarantees the tightness of the sequence $\pare{P_n}_ {n\in\bb N}$.
\begin{lemma}\label{lemma1}
The sequence of probability measures $\pare{P_n}_{n\in \bb N}$ is tight.
\end{lemma}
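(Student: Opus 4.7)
The plan is to exploit the product topology on $D\pare{[0,T],\overline\Omega}\times D\pare{[0,T],\brc{1,2}}$ and prove tightness of each coordinate separately. Since both $\overline\Omega$ and $\brc{1,2}$ are compact, the time-marginal distributions of $Y_n(t)$ and $I_n(t)$ are automatically tight in the respective state spaces, so by Aldous' tightness criterion (Chapter 3 of \cite{bil}) it only remains to control the second moment of the increments of suitable bounded test functions between stopping times, which I would obtain through the martingale decomposition \eqref{f1}--\eqref{f2}.

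The key uniform estimate is that hypothesis \textbf{(H)} makes the kernels $J,G,R$ bounded on $\overline\Omega\times\overline\Omega$, and since $\chi_{A_n},\chi_{B_n}\leq 1$ and $\Omega$ has finite Lebesgue measure, for any bounded measurable $f:\overline\Omega\times\brc{1,2}\to\bb R$ one gets
$$
\norm{\mathcal L_n f}_\infty\leq C\norm{f}_\infty \qquad\text{and}\qquad
\norm{\mathcal L_n(f^2)-2f\,\mathcal L_n f}_\infty\leq C\norm{f}_\infty^2,
$$
with $C$ depending only on $\abs\Omega$ and $\norm{J}_\infty+\norm{G}_\infty+\norm{R}_\infty$, but not on $n$. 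Applying the optional sampling theorem to $M_n^f$ and $N_n^f$ then yields, for any stopping time $\tau\leq T$ and $\delta>0$,
$$
\bb E\cor{\pare{f\pare{Y_n\pare{\tau+\delta},I_n\pare{\tau+\delta}}-f\pare{Y_n(\tau),I_n(\tau)}}^2}
\leq C\pare{\delta+\delta^2}\norm{f}_\infty^2,
$$
uniformly in $n$ and in $\tau$, after bounding the integral term $\int_\tau^{\tau+\delta}\mathcal L_n f\,ds$ via Cauchy-Schwarz and the martingale term by its quadratic variation. By Chebyshev's inequality this supplies the modulus-of-continuity bound required by Aldous' criterion.

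To conclude, I would specialize the choice of $f$: taking $f(x,i)=g(x)$ as $g$ runs over a countable family of Lipschitz functions determining the topology of $\overline\Omega$ yields tightness of $(Y_n)$ in $D\pare{[0,T],\overline\Omega}$, while the same bound applied to $f(x,i)=i$ (viewing $\brc{1,2}\subset\bb R$, where the discrete topology on the two-point set coincides with the subspace topology) gives tightness of $(I_n)$ in $D\pare{[0,T],\brc{1,2}}$. The only technical point to check carefully is that the constant $C$ in the generator bound does not deteriorate with $n$, which is immediate because the partition-dependent indicators $\chi_{A_n},\chi_{B_n}$ enter only as bounded multiplicative factors under integrals over the fixed bounded domain $\Omega$; consequently no serious obstacle is expected here, and the argument reduces to a standard application of Aldous' criterion in the compact-state-space setting.
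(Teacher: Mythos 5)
Your proposal is correct and follows essentially the same route as the paper: tightness of each marginal under the product topology, compactness of $\overline\Omega$ and $\brc{1,2}$ for the containment condition, and control of increments at stopping times via the martingales \eqref{f1}--\eqref{f2} with generator bounds uniform in $n$, concluded by Chebyshev and an Aldous-type criterion (the paper cites Theorem 1.3 and Proposition 1.6 of Chapter 4 in \cite{KL}). The only cosmetic difference is that the paper applies the argument directly to $g(x,i)=x$ and $h(x,i)=i$ rather than to a countable family of Lipschitz test functions, and keeps the drift and martingale contributions separate instead of merging them into one $L^2$ bound.
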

\begin{proof}
Let $P_n^1$ and $P_n^2$  be the two marginals of $P_n$. Since $D\pare{[0, T], \overline\Omega}\times D\pare{[0, T], \brc{1,2}}$ is endowed with the product topology, in order to conclude, it is enough to show that the marginals $P_n^1$ and $P_n^2$ are tight. 

We start by proving that the sequence $P_n^1$ is tight.
By Theorem 1.3 and Proposition 1.6 of Chapter 4 in \cite{KL}, it is sufficient to show that the following conditions hold:
\begin{enumerate}
\item for every $ t\in [0,T]$ and $\epsilon>0$ there exists a compact set $K(t,\epsilon )\subseteq \overline\Omega$ such that 
$$\sup_{n} P_{n}^1\Big( Y\pare{\cdot}: Y\pare{t}\not\in K\pare{t,\epsilon} \Big)\leq \epsilon,$$
\item for every $ \epsilon>0$, we have that $$\lim_{\zeta\to 0}\limsup_{n\to +\infty} \sup_{\tau\in \Lambda_T, \theta\leq\zeta}  
P_{n}^1\Big( Y\pare{\cdot}: \abs{Y\pare{\tau+\theta}-Y\pare{\tau}}>\epsilon \Big)=0,$$
where $\Lambda_T$ is the family of all stopping times bounded by $T$.
\end{enumerate}

The first condition is satisfied since $\overline{\Omega}$ is a compact space. To prove the second condition, fix $\tau\in \Lambda_T$, $\epsilon>0$ and observe that, considering the function $g\pare{x, i}=x$ in \eqref{f1}, we get that
$$
M_n^{g}\pare{t}=Y_n\pare{t}-Y_n\pare{0}-\int_0^t\mathcal L_n g\pare{Y_n\pare{s}, I_n\pare{s}}ds.
$$
Therefore,
\begin{align}\label{con1}
\abs{Y_n\pare{\tau+\theta}-Y_n\pare{\tau}}\leq \abs{\int_\tau^{\tau+\theta}\mathcal L_n  g \pare{Y_n\pare{s}, I_n\pare{s}}ds}+
\Big|M_n^{g}\pare{\tau+\theta}-M_n^{g}\pare{\tau} \Big|.
\end{align}
Since
\begin{align}
\mathcal L_ng\pare{x,i}=&\chi_{A_n}\pare{x}\chi_{1}(i)\cor{\int_{\Omega}\chi_{A_n}\pare{y} J\pare{x,y}\pare{y-x}dy +\int_\Omega \chi_{B_n}\pare{y}R\pare{x,y}\pare{y-x}dy}\\
&+ \chi_{B_n}\pare{x}\chi_2(i)\cor{\int_\Omega \chi_{B_n}\pare{y}G\pare{x,y}\pare{y-x}dy+\int_{\Omega}\chi_{A_n}\pare{y} R\pare{x,y}\pare{y-x}dy }, 
\end{align}
we have that
\begin{align}\label{c1}
\abs{\int_\tau^{\tau+\theta}\mathcal L_n g \pare{Y_n\pare{s}, I_n\pare{s}}ds}\leq C_1\theta,
\end{align}
where $C_1:=C_1\pare{\norm{J}_\infty, \norm{G}_\infty, \norm{R}_\infty, |\Omega|}$ is a constant depending on $\norm{J}_\infty, \norm{G}_\infty, \norm{R}_\infty$ and $|\Omega|$. Moreover, by \eqref{f2} we get that
$$
\bb E\pare{\pare{M_n^{g}\pare{\tau+\theta}}^2-\pare{M_n^{g}\pare{\tau}}^2}
=\bb E\pare{\int_{\tau}^{\tau+\theta} \pare{\mathcal L_n\pare{g\pare{Y_n\pare{s}, I_n\pare{s}}}^2-2g\pare{Y_n\pare{s}, I_n(s)}\mathcal L_ng\pare{Y_n\pare{s} I_n(s)}}ds}.
$$
Since
\begin{align}
&\mathcal L_n\pare{g\pare{x, i}}^2-2g\pare{x, i}\mathcal L_ng\pare{x ,i}\\
&= \chi_{A_n}\pare{x}\chi_{1}(i)\cor{\int_{\Omega}\chi_{A_n}\pare{y} J\pare{x,y}\pare{y^2-x^2}dy +\int_\Omega \chi_{B_n}\pare{y}R\pare{x,y}\pare{y^2-x^2}dy}\\
&\hspace{+25pt}+ \chi_{B_n}\pare{x}\chi_2(i)\cor{\int_\Omega \chi_{B_n}\pare{y}G\pare{x,y}\pare{y^2-x^2}dy+\int_{\Omega}\chi_{A_n}\pare{y} R\pare{x,y}\pare{y^2-x^2}dy }\\
&\hspace{+25pt}-2x\chi_{A_n}\pare{x}\chi_{1}(i)\cor{\int_{\Omega}\chi_{A_n}\pare{y} J\pare{x,y}\pare{y-x}dy +\int_\Omega \chi_{B_n}\pare{y}R\pare{x,y}\pare{y-x}dy}\\
&\hspace{+25pt} -2x \chi_{B_n}\pare{x}\chi_2(i)\cor{\int_\Omega \chi_{B_n}\pare{y}G\pare{x,y}\pare{y-x}dy+\int_{\Omega}\chi_{A_n}\pare{y} R\pare{x,y}\pare{y-x}dy }.
\end{align}
we obtain that
\begin{align}
&\bb E\pare{\pare{M_n^{g}\pare{\tau+\theta}}^2-\pare{M_n^{g}\pare{\tau}}^2}\leq C_1\theta,
\end{align}
Therefore, by Markov's inequality 
\begin{equation}\label{c2}
\bb P\pare{\abs{M_n^{g}\pare{\tau+\theta}-M_n^{g}\pare{\tau}}>\epsilon}\leq \frac{\bb E\pare{\pare{M_n^{g}\pare{\tau+\theta}}^2-\pare{M_n^{g}\pare{\tau}}^2}}{\epsilon^2} \leq\frac{C\theta}{\epsilon^2},
\end{equation}
for all $\epsilon>0$.
The bounds \eqref{con1}, \eqref{c1} and \eqref{c2} allow to conclude the second condition that guarantees the tightness of the sequence $P_n^1$.

We proceed now in a similar way to prove the tightness of the sequence $P_n^2$. As before it is enough to show that
\begin{enumerate}
\item for every $ t\in [0,T]$ and every $ \epsilon>0$ there exists a compact set $K(t,\epsilon )\subseteq\brc{1,2}$ such that 
$$\sup_{n} P_{n}^2\Big(I\pare{\cdot}: I\pare{t}\not\in K\pare{t,\epsilon}\Big)\leq \epsilon,$$
\item for every $ \epsilon>0$ it holds that $$\lim_{\zeta\to 0}\limsup_{n\to +\infty} \sup_{\tau\in L_T,\theta\leq\zeta}  
P_{n}^2\Big( I\pare{\cdot}: \abs{I\pare{\tau+\theta}-I\pare{\tau}}>\epsilon \Big) =0.$$
\end{enumerate}
The first condition is trivially satisfied taking $K(t, \epsilon)=\brc{1,2}$. Hence, we need to prove the second condition.
Considering the function $h\pare{x, i}=i$ in \eqref{f1}, we get that
$$
M_n^{h}\pare{t}=I_n\pare{t}-I_n\pare{0}-\int_0^t\mathcal L_n
h\pare{Y_n\pare{s}, I_n\pare{s}}ds.
$$
Therefore
\begin{align}\label{con0}
\abs{I_n\pare{\tau+\theta}-I_n\pare{\tau}}\leq \abs{\int_\tau^{\tau+\theta}\mathcal L_nh\pare{Y_n\pare{s}, I_n\pare{s}}ds}+
\Big| {M_n^{h}\pare{\tau+\theta}-M_n^{h}\pare{\tau}} \Big|.
\end{align}
Since
$$
\mathcal L_nh\pare{x,i}=\chi_{A_n}\pare{x} \chi_{1}\pare{i} \int_\Omega \chi_{B_n}\pare{y} R\pare{x,y}dy
-\chi_{B_n}\pare{x} \chi_{2}\pare{i} \int_\Omega \chi_{A_n}\pare{y}R\pare{x,y}dy,
$$
we have that
\begin{align}\label{d1}
\abs{\int_\tau^{\tau+\theta}\mathcal L_n h \pare{Y_n\pare{s}, I_n\pare{s}}ds}\leq C_2\theta,
\end{align}
where $C_2=C_2\pare{\norm{R}_\infty, |\Omega|}$ is a constant depending on $\norm{R}_\infty$ and $|\Omega|$. Moreover, by \eqref{f2}, we get that
\begin{align}
&\bb E\pare{\pare{M_n^{h}\pare{\tau+\theta}}^2-\pare{M_n^{h}\pare{\tau}}^2}
=\bb E\pare{\int_{\tau}^{\tau+\theta} \pare{\mathcal L_n\pare{h \pare{Y_n\pare{s}, I_n\pare{s}}}^2-2 h \pare{Y_n\pare{s}, I_n(s)}\mathcal L_n h \pare{Y_n\pare{s} I_n(s)}} ds  } \leq C_2\theta.
\end{align}
The last inequality follows from the fact that
\begin{align}
& \mathcal L_n\pare{h\pare{x, i}}^2-2 h \pare{x, i}\mathcal L_nf_2\pare{x, i} =  \pare{3-2i}\chi_{A_n}\pare{x} \chi_{1}\pare{i} \int_\Omega \chi_{B_n}\pare{y} R\pare{x,y}dy+\pare{-3+2i}\chi_{B_n}\pare{x} \chi_{2}\pare{i} \int_\Omega \chi_{A_n}\pare{y}R\pare{x,y}dy.
\end{align}
Finally, by Markov's inequality we get that
\begin{equation}\label{d2}
\bb P\pare{\abs{M_n^{h}\pare{\tau+\theta}-M_n^{h}\pare{\tau}}>\epsilon}\leq \frac{\bb E\pare{\pare{M_n^{h}\pare{\tau+\theta}}^2-\pare{M_n^{h}\pare{\tau}}^2}}{\epsilon^2} \leq\frac{C_2\theta}{\epsilon^2},
\end{equation}
for all $\epsilon>0$.
Bounds \eqref{con1}, \eqref{d1} and \eqref{d2} allow to conclude the second condition that guarantees the tightness of the sequence $P_n^2$.
\end{proof}
Lemma \ref{lemma1} guarantees that the sequence of processes $\pare{Y_n(t), I_n(t)}_{t\in [0, T]}$ converges in distribution along subsequences.
In the following theorem we prove that all subsequences converge to the same limit and we characterize the generator of the limit process.

\begin{theorem}\label{Prop:1}
The sequence $\pare{Y_n\pare{t}, I_n\pare{t}}$ converges 
$$\pare{Y_n\pare{t}, I_n\pare{t}}\xrightarrow[n\to +\infty]{D}\pare{Y\pare{t}, I\pare{t}}$$ in $D\pare{[0, T], \overline \Omega}\times D\pare{[0, T], \brc{1,2}}$. 
The limit $\pare{Y\pare{t}, I\pare{t}}$ is a Markov process 
 whose generator $\widetilde{\mathcal L}$ is defined on functions $f\in C\pare{\overline\Omega\times\brc{1,2}}$ as
\begin{align}
\widetilde{\mathcal L}f\pare{x, i}= \, &\chi_{1}\pare{i}
\brc{\int_\Omega X(y)J\pare{x, y}\pare{f\pare{y,1}-f\pare{x,1}}dy+\int_\Omega \pare{1-X(y)}R\pare{x, y}\pare{f\pare{y, 2}-f\pare{x, 1}}dy}\\[10pt]
&+\chi_{2}\pare{i}\brc{ \int_\Omega \pare{1-X\pare{y}}G\pare{x, y}\pare{f\pare{y,2}-f\pare{x, 2}}dy+\int_\Omega X(y)R\pare{x, y}\pare{f\pare{y, 1}-f\pare{x, 2}}dy}.
\end{align}
\end{theorem}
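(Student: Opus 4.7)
The plan is to combine the tightness of Lemma \ref{lemma1} with a martingale problem characterization of the limit. By Prokhorov's theorem every subsequence of $(P_n)$ has a further weakly convergent subsequence, so it suffices to show that every weak limit is the law of a process whose generator on $C(\overline\Omega\times\{1,2\})$ is $\widetilde{\mathcal L}$, and that this law is uniquely determined by the initial distribution.

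The key reduction is to rewrite $\mathcal L_n f$ along trajectories. Because $I_n(s)=1$ forces $Y_n(s)\in A_n$ (and $I_n(s)=2$ forces $Y_n(s)\in B_n$), the outer factors $\chi_{A_n}(Y_n(s))$ and $\chi_{B_n}(Y_n(s))$ in \eqref{gen.intro1} coincide with $\chi_1(I_n(s))$ and $\chi_2(I_n(s))$. Setting
\[
\Psi_n^1(x) = \int_\Omega \chi_{A_n}(y) J(x,y)(f(y,1)-f(x,1))\,dy + \int_\Omega \chi_{B_n}(y) R(x,y)(f(y,2)-f(x,1))\,dy,
\]
and $\Psi_n^2(x)$ analogously with $G,R$ and the swapped indicators, one has $\mathcal L_n f(Y_n(s),I_n(s)) = \chi_1(I_n(s))\,\Psi_n^1(Y_n(s)) + \chi_2(I_n(s))\,\Psi_n^2(Y_n(s))$. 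Denote by $\Psi^i$ the same integrals with $\chi_{A_n},\chi_{B_n}$ replaced by $X,1-X$; then $\chi_1(i)\Psi^1(x) + \chi_2(i)\Psi^2(x) = \widetilde{\mathcal L}f(x,i)$. The claim is that $\Psi_n^i \to \Psi^i$ uniformly on $\overline\Omega$. Pointwise convergence at each $x$ is immediate from $\chi_{A_n}\rightharpoonup X$ in $L^\infty(\Omega)$ applied to the bounded $L^1$ test function $y\mapsto J(x,y)(f(y,1)-f(x,1))$, and similarly with $R$; equicontinuity of $\{\Psi_n^1\}$ follows from uniform continuity of $J$ and $R$ on the compact $\overline\Omega\times\overline\Omega$ together with boundedness of $f$. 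Arzel\`a--Ascoli then promotes pointwise to uniform convergence.

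Next, take any weakly convergent subsequence $P_{n_k}\to P$ and, via Skorohod's representation, realize the laws on a common probability space so that $(Y_{n_k},I_{n_k})\to(Y,I)$ almost surely in the product Skorohod topology. The martingale $M_{n_k}^f$ from \eqref{f1} then converges a.s.\ at every continuity point $t$ of the limit trajectory to
\[
M^f(t) = f(Y(t),I(t)) - f(Y(0),I(0)) - \int_0^t \widetilde{\mathcal L}f(Y(s),I(s))\,ds,
\]
using the uniform convergence of the integrand, its boundedness, and the fact that the set of jump times of a cadlag path is countable (hence Lebesgue-negligible). Uniform integrability, from the $L^\infty$ bound on $f$ and the kernels, lets the martingale property pass to the limit: the conditional-expectation identity survives for all $t_1<t_2$ and bounded continuous test functionals of the past. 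Thus $P$ solves the martingale problem for $\widetilde{\mathcal L}$ with initial law equal to that of $(Y(0),I(0))$.

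For uniqueness of $P$, observe that any solution of this martingale problem has one-dimensional marginals whose Lebesgue densities in $x$, split according to the value of $I$, solve the forward system \eqref{sys1.intro}; the uniqueness statement of Theorem \ref{teo.main.intro} then fixes these marginals, and the Markov property implicit in the martingale problem fixes all finite-dimensional distributions, hence the law on path space. So every subsequential limit coincides, the whole sequence converges, and the limit process has generator $\widetilde{\mathcal L}$. The main obstacle is the passage to the limit in the time integral above: the outer indicators $\chi_{A_n}(x)$ and $\chi_{B_n}(x)$ in $\mathcal L_n f$ are discontinuous in $x$ and converge only weakly, so continuous mapping cannot be applied to $\mathcal L_n f$ directly; the resolution is the tautology $\chi_{A_n}(Y_n(s))=\chi_1(I_n(s))$, which removes these discontinuous factors before the limit and leaves only inner indicators integrated against the continuous kernels $J,R,G$.
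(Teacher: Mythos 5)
Your proposal is correct and follows the same skeleton as the paper's proof: tightness of $(P_n)$ from Lemma \ref{lemma1}, reduction to the martingale problem for $\widetilde{\mathcal L}$, the key identity $\chi_{A_n}\pare{Y_n(s)}=\chi_{1}\pare{I_n(s)}$ that removes the discontinuous outer indicators, and the uniform-in-$x$ convergence $\sup_{x}\abs{\int_\Omega\pare{\chi_{A_n}(y)-X(y)}K(x,y)h(y)\,dy}\to 0$ for the continuous kernels $K\in\brc{J,G,R}$, which is exactly \eqref{mj1}--\eqref{mj2}. Where you genuinely diverge is the limit passage. The paper keeps the functional built from $\widetilde{\mathcal L}f$, uses weak convergence of $P_{n_k}$ for that term, and controls the discrepancy by estimating $\bb E^{P_{n_k}}\int_{t_0}^{t}\abs{\mathcal L_{n_k}f-\widetilde{\mathcal L}f}\,ds$ analytically, via Fubini and the fact that $u_{n_k}(s,\cdot)$ is the density of $Y_{n_k}(s)$ (Corollary \ref{corol.densidad}), invoking the convergences of Theorem \ref{teo.main.intro}; you instead use Skorohod's representation, prove uniform convergence of the frozen generators $\Psi^i_n\to\Psi^i$ by equicontinuity plus pointwise convergence on a compact set, and send the whole martingale to the limit pathwise at continuity times. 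Your route has the advantage of not needing the densities $u_n$ or Theorem \ref{teo.main.intro} in the convergence step (they enter only in the identification of the limit), at the cost of the usual Skorohod technicalities, which you only sketch: the martingale identity should first be obtained for times (and for test functionals depending on finitely many times) outside the at most countable set of fixed discontinuity times of the limit law, and then extended by right-continuity; the paper allows itself an analogous gloss when it equates the limiting expectation with the limit of expectations.

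Two points in your uniqueness step deserve more care. First, the one-dimensional marginals of an arbitrary solution of the martingale problem are a priori only measures, so you cannot speak of their ``Lebesgue densities'' and appeal directly to the $L^2$ uniqueness of Theorem \ref{teo.main.intro}; you should argue at the measure level via the weak forward system and a total-variation Gronwall bound (this is precisely Lemma \ref{lemma:weaksys}, proved in the paper right after this theorem), or simply invoke well-posedness of the martingale problem for the bounded generator $\widetilde{\mathcal L}$ as in Ethier--Kurtz, Chapter 4, which is what the paper cites. Second, a single solution of a martingale problem is not automatically Markov; the Markov property is a consequence of uniqueness of one-dimensional marginals for all initial laws, not an ingredient one may assume beforehand, so phrase that step as an application of the standard Ethier--Kurtz uniqueness theorem rather than of a ``Markov property implicit in the martingale problem''. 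With these adjustments your argument is complete.
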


\begin{proof}
Lemma \ref{lemma1} implies that any subsequence of $P_n$ has a convergent sub-subsequence; it remains then to characterize all the limit points of the sequence $ P_n$. Let $ \tilde P$ be a limit point and $ P_{n_k}$ be a subsequence converging to $\tilde P$.
To prove the theorem it is enough to show that $\tilde P$ concentrates its mass on a process $\pare{Y\pare{\cdot}, I\pare{\cdot}}$ such that,
\begin{align}
f\pare{Y\pare{t}, I\pare{t}}-f\pare{Y\pare{0}, I\pare{0}}-\int_0^t \widetilde{\mathcal L}f\pare{Y\pare{s}, I\pare{s}}ds
\end{align}
is a martingale, for every $f\in C\pare{\overline\Omega\times\brc{1,2}, \bb R}$ and for every $t\in [0,T]$. This implies convergence of the entire sequence $P_n$ and caracterizes the limit $\tilde P$ as the law of the Markov process with generator $\widetilde{\mathcal L}$, we refer the reader to Chapter 4 in \cite{EK} for a deeper discussion of the issue.
Therefore, to conclude the proof we need to show that,
\begin{align}\label{pok0}
\bb E^{\tilde P}\cor{g\pare{\pare{Y\pare{s}, I\pare{s}}, 0\leq s\leq t_0}\pare{f\pare{Y\pare{t}, I\pare{t}}-f\pare{Y\pare{t_0}, I\pare{t_0}}-\int_{t_0}^t\widetilde{\mathcal L}f\pare{Y\pare{s}, I\pare{s}}ds}}=0,
\end{align}
for every bounded continuous function $g:\mathcal D\pare{[0, T], \overline\Omega}\times \mathcal D\pare{[0, T], \brc{1,2}}\to \bb R$,  for every $f\in C\pare{\overline\Omega\times\brc{1,2}}$ and for every $0\leq t_0<t\leq T$. 

By the tightness proved in Lemma \ref{lemma1} we know that
\begin{align}\label{pouy}
\bb E^{\tilde P}&\cor{g\pare{\pare{Y\pare{s}, I\pare{s}}, 0\leq s\leq t_0}\pare{f\pare{Y\pare{t}, I\pare{t}}-f\pare{Y\pare{t_0}, I\pare{t_0}}-\int_{t_0}^t \tilde{\mathcal L}f\pare{Y\pare{s}, I\pare{s}}ds}}\\
&=\lim_{n_k\to +\infty}\bb E^{P^{n_k}}\pare{g\pare{\pare{Y\pare{s}, I\pare{s}}, 0\leq s\leq t_0}\pare{f\pare{Y\pare{t}, I\pare{t}}-f\pare{Y\pare{t_0}, I\pare{t_0}}-\int_{t_0}^t \tilde{\mathcal L}f\pare{Y\pare{s}, I\pare{s}}ds}}.
\end{align}

By the triangular inequality, we have
\begin{align}\label{ineq10}
&\abs{\bb E^{P^{n_k}}\pare{g\pare{\pare{Y\pare{s}, I\pare{s}}, 0\leq s\leq t_0}\pare{f\pare{Y\pare{t}, I\pare{t}}-f\pare{Y\pare{t_0}, I\pare{t_0}}-\int_{t_0}^t\widetilde{\mathcal L}f\pare{Y\pare{s}, I\pare{s}}ds}} }\\[10pt]
&\hspace{+20pt}\leq\abs{\bb E^{P_{n_k}}\pare{g\pare{\pare{Y\pare{s}, I\pare{s}}, 0\leq s\leq t_0}\pare{f\pare{Y\pare{t}, I\pare{t}}-f\pare{Y\pare{t_0}, I\pare{t_0}}-\int_{t_0}^t\mathcal L_{n_k}f\pare{Y\pare{s}, I\pare{s}}ds}}}\\[10pt]
&\hspace{+35pt}+\abs{\bb E^{P_{n_k}}\pare{g\pare{\pare{Y\pare{s}, I\pare{s}}, 0\leq s\leq t_0}\int_{t_0}^t\pare{\mathcal L_{n_k}f\pare{Y\pare{s}, I\pare{s}}-\tilde{\mathcal L}f\pare{Y\pare{s}, I\pare{s}}}ds}}.
\end{align}

Let us analyze the first term in the right hand side of \eqref{ineq10}. By \eqref{f1}, we have that
\begin{align}
f\pare{Y\pare{t}, I\pare{t}}-f\pare{Y\pare{t_0}, I\pare{t_0}}-\int_{t_0}^t\mathcal L_{n_k}f\pare{Y\pare{s}, I\pare{s}}ds
\end{align}
is a martingale. Therefore, 
$$
\bb E^{P_{n_k}}\pare{g\pare{\pare{Y\pare{s}, I\pare{s}}, 0\leq s\leq t_0}\pare{f\pare{Y\pare{t}, I\pare{t}}-f\pare{Y\pare{t_0}, I\pare{t_0}}-\int_{t_0}^t\mathcal L_{n_k}f\pare{Y\pare{s}, I\pare{s}}ds}}
=0.
$$
Hence, to conclude \eqref{pok0} we just need to show that
\begin{align}
\lim_{n_k\to \infty}\abs{\bb E^{P_{n_k}}\pare{g\pare{\pare{Y\pare{s}, I\pare{s}}, 0\leq s\leq t_0}\int_{t_0}^t\pare{\mathcal L_{n_k}f\pare{Y\pare{s}, I\pare{s}}-\widetilde{\mathcal L}f\pare{Y\pare{s}, I\pare{s}}}ds}}=0.
\end{align}
Since
\begin{align}
&\abs{\bb E^{P_{n_k}}\pare{g\pare{\pare{Y\pare{s}, I\pare{s}}, 0\leq s\leq t_0}\int_{t_0}^t\pare{\mathcal L_{n_k}f\pare{Y\pare{s}, I\pare{s}}-\widetilde{\mathcal L}f\pare{Y\pare{s}, I\pare{s}}}ds}}\\[10pt]
&\hspace{+15pt}\leq\norm{g}_\infty\bb E^{P_{n_k}}\pare{\int_{t_0}^t\abs{\mathcal L_{n_k}f\pare{Y\pare{s}, I\pare{s}}-\widetilde{\mathcal L}f\pare{Y\pare{s}, I\pare{s}}}ds}
\end{align}
it is enough to prove that
\begin{align}\label{tesi30}
\lim_{n_k\to \infty}\bb E^{P_{n_k}}
\pare{\int_{t_0}^t\abs{\mathcal L_{n_k}f\pare{Y\pare{s}, I\pare{s}}-\widetilde{\mathcal L}f\pare{Y\pare{s}, I\pare{s}}}ds}=0.
\end{align}
Denoting by $\mathcal D:= \mathcal D\pare{[0, T], \overline\Omega}\times \mathcal D\pare{[0, T], \brc{1,2}}$ and using Fubini's theorem  we get
\begin{align}\label{equal1}
&\bb E^{P_{n_k}}\pare{\int_{t_0}^t\abs{\mathcal L_{n_k}f\pare{Y\pare{s}, I\pare{s}}-\widetilde{\mathcal L}f\pare{Y\pare{s}, I\pare{s}}}ds}
=\int_{t_0}^t\int_{\mathcal D}\abs{\mathcal L_{n_k}f\pare{Y\pare{s}, I\pare{s}}-\widetilde{\mathcal L}f\pare{Y\pare{s}, I\pare{s}}}dP_{n_k}\pare{Y, I}ds.
\end{align}
Observe that $I_n(s)=1+\chi_{{ B_n}}\pare{Y_n(s)}$ and $\chi_{A_n}\pare{Y_n(s)}=\chi_{1}\pare{I_n(s) }$. Then, recalling that  $u_n(x,s)$ is the probability density of the process $Y_n(s)$, we get
\begin{align}
&\int_{t_0}^t\int_{\mathcal D}\abs{\mathcal L_{n_k}f\pare{Y\pare{s}, I\pare{s}}-\widetilde{\mathcal L}f\pare{Y\pare{s}, I\pare{s}}}dP_{n_k}\pare{Y, I}ds\\
&\hspace{+15pt}=\int_{t_0}^t\int_{\Omega}\abs{\mathcal L_{n_k}f\pare{x, 1+\chi_{{B_{n_k}}}\pare{x}}-\widetilde{\mathcal L}f\pare{x, 1+\chi_{{B_{n_k}}}\pare{x}}}u_{n_k}\pare{x,s}dxds\\[10pt]
&\hspace{+15pt}\leq\int_{t_0}^t\int_\Omega\abs{\Upsilon_{n_k}^1\pare{x}+\Upsilon_{n_k}^2\pare{x}+\Upsilon_{n_k}^3\pare{x}+\Upsilon_{n_k}^4\pare{x}}u_{n_k}\pare{x, s}dxds,
\end{align}
where
\begin{align}
&\Upsilon_{n_k}^1\pare{x}=\chi_{A_{n_k}}\pare{x}\int_\Omega \chi_{A_n}\pare{y}J\pare{x,y}\pare{f\pare{y, 1}-f\pare{x, 1}}dy
- \chi_{A_{n_k}}\pare{x}\int_\Omega X(y)J\pare{x, y}\pare{f\pare{y,1}-f\pare{x,1}}dy,\\[10pt]
&\Upsilon_{n_k}^2\pare{x}=\chi_{A_{n_k}}\pare{x}\int_\Omega \chi_{B_{n_k}}\pare{y}R\pare{x,y}\pare{f\pare{y, 2}-f\pare{x, 1}}dy
- \chi_{A_{n_k}}\pare{x}\int_\Omega \pare{1-X(y)}R\pare{x, y}\pare{f\pare{y, 2}-f\pare{x, 1}}dy,\\[10pt]
&\Upsilon_{n_k}^3\pare{x}=\chi_{B_{n_k}}\pare{x}\int_\Omega \chi_{B_{n_k}}\pare{y}G\pare{x,y}\pare{f\pare{y,2}-f\pare{x,2}}dy
-\chi_{B_{n_k}}\pare{x} \int_\Omega \pare{1-X\pare{y}}G\pare{x, y}\pare{f\pare{y,2}-f\pare{x, 2}}dy,\\[10pt]
&\Upsilon_{n_k}^4\pare{x}=\chi_{B_{n_k}}\pare{x}\int_\Omega \chi_{A_{n_k}}\pare{y}R\pare{x,y}\pare{f\pare{y, 1}-f\pare{x, 2}}dy
- \chi_{B_{n_k}}\pare{x}\int_\Omega X(y)R\pare{x, y}\pare{f\pare{y, 1}-f\pare{x, 2}}dy.
\end{align}
Therefore, \eqref{tesi30} is proved once we show that
\begin{align}\label{fourp}
\lim_{n_k\to \infty}\int_{t_0}^t\int_{\Omega}\abs{\Upsilon_{n_k}^i\pare{x}}u_{n_k}( s,x)ds=0,\qquad\forall i\in \brc{1,2,3, 4}.
\end{align}
Since $u_n(s,x)\chi_{A_n}(x)=a_n(s,x)$, we get
\begin{align}\label{form1}
&\int_{t_0}^t\int_\Omega\abs{\Upsilon^1_{n_k}(x)}u_{n_k}\pare{s,x}dxds\\[10pt]
&\hspace{+0pt}=\int_{t_0}^t\int_{\Omega}\abs{\int_\Omega \chi_{A_n}\pare{y}J\pare{x,y}\pare{f\pare{y, 1}-f\pare{x, 1}}dy- \pare{x}\int_\Omega X(y)J\pare{x, y}\pare{f\pare{y,1}-f\pare{x,1}}dy}a_{n_k}\pare{ s,x}dxds\\[10pt]
&\hspace{+0pt}\leq\int_{t_0}^t\int_{\Omega}\abs{\int_\Omega \pare{\chi_{A_{n_k}}\pare{y}-X(y)}J\pare{x,y}f\pare{y,1}dy}a_{n_k}\pare{s,x}dxds
+\int_{t_0}^t\int_{\Omega}\abs{\int_\Omega \pare{\chi_{A_{n_k}}\pare{y}-X(y)}J\pare{x,y}dy}\abs{f\pare{x, 1}}a_{n_k}\pare{s,x}dxds.
\end{align}
By the continuity of $J$ and the fact that $\chi_{A_n} (x) \rightharpoonup X(x)$ (see \eqref{cond.sets}) we get
\begin{align}\label{mj1}
\sup_{x\in \Omega}\abs{\int_\Omega \Big\{\chi_{A_n}\pare{y}-X(y)\Big\}f(y, 1)J\pare{x,y}dy }\xrightarrow[n\to +\infty]{}0,
\end{align}
and
\begin{align}\label{mj2}
\sup_{x\in \Omega}\abs{\int_\Omega \Big\{\chi_{A_n}\pare{y}-X(y)\Big\}J\pare{x,y}dy}\xrightarrow[n\to +\infty]{}0.
\end{align}
Recall that $a_{n_k}(s,x)\rightharpoonup a(s,x)$ (see Theorem \ref{teo.main.intro}). By \eqref{mj1} and \eqref{mj2}, we obtain that
the right hand side of \eqref{form1} converges to $0$ as $n\to +\infty$.
Arguing as before we can conclude that
\begin{align}\label{form2}
\int_{t_0}^t\int_\Omega\abs{\Upsilon^i_{n_k}(x)}u_{n_k}\pare{s,x}dxds\xrightarrow[n_k\to\infty]{}0,\qquad\forall i\in \brc{2,3,4}.
\end{align}
This concludes the proof of \eqref{fourp}.
\end{proof}

We  can prove now the last statement of Theorem \ref{teo.main.intro}, i.e., that the distribution of the limit process $\pare{Y(t), I(t)}$ is characterized by the densities $a(t,x)$ and $b(t,x)$.

First of all, observe that, for every measurable $E\subseteq \overline\Omega$,
\begin{align}
\bb P\pare{\pare{Y_n(0), I_n(0)}\in E\times\brc{1}}&=\bb P\pare{Y_n(0)\in E\cap A_n}\\
&=\int_{E\cap A_n}u_0(x)dx\\
&=\int_E u_0(x)\chi_{A_n}(x)dx\xrightarrow[n\to\infty]{}\int_E u_0(x)X\pare{x}dx.
\end{align}
Therefore, by the tightness result proved in Lemma \ref{lemma1}, we can write
\begin{align}\label{ic1}
\bb P\pare{\pare{Y(0), I(0)}\in E\times\brc{1}}=\int_E u_0(x)X\pare{x}dx.
\end{align}
Analogously, we get that
\begin{align}\label{ic2}
\bb P\pare{\pare{Y(0), I(0)}\in E\times\brc{2}}=\int_E u_0(x)\pare{1-X\pare{x}}dx.
\end{align}

Let $\mu(dx, i)=\pare{\mu_t(dx, i)}_{t\in [0, T]}$ be the law of the limit process $\pare{Y(t), I(t)}_{t\in [0, T]}$. We can decompose
\begin{align}\label{decmea}
\mu_t(dx, i)=\bb \chi_{1}(i)\mu_t(dx, 1)+\bb \chi_{2}(i)\mu_t(dx, 2)
\end{align}
where, by \eqref{ic1} and \eqref{ic2}, $\pare{\mu_t(dx, 1)}_{t\in [0, T]}$ and $\pare{\mu_t(dx, 2)}_{t\in [0, T]}$ are such that
\begin{align}\label{ic}
\mu_0(dx, 1)=u_0(x)X\pare{x}dx \quad \text{ and }\quad \mu_0(dx, 2)=u_0(x)\pare{1-X(x)}dx.
\end{align}
Since $\widetilde{\mathcal L}$ is the generator of the process $\pare{Y(t), I(t)}$ (see Theorem \ref{Prop:1}), by Lemma A.5.1 of \cite{KL} we can conclude that 
\begin{align}\label{endp}
&\displaystyle \frac{\partial}{\partial t}\sum_{i=1}^2\int_{\overline\Omega}f(x, i)\mu_t(dx, i)=\sum_{i=1}^2\int_{\overline\Omega}\widetilde{\mathcal L}f(x, i)\mu_t(dx, i),
\end{align}
for all bounded $f:\overline\Omega\times \brc{1,2}\to \bb R$.
Therefore, fixing $g\in C\pare{\overline\Omega}$ and choosing $f(x, i)=g(x)\chi_{1}(i)$, we get
\begin{align}\label{eqm1}
\frac{\partial }{\partial t}\int_{\overline\Omega}g(x)\mu_t(dx, 1)&=\int_{\overline\Omega}\int_{\overline\Omega}X(y)J\pare{x, y}\pare{g\pare{y}-g\pare{x}}dy\;\mu_t(dx,1)\\[10pt]
&\qquad -\int_{\overline\Omega}\int_{\overline\Omega}\pare{1-X\pare{y}}R\pare{x, y}g\pare{x}dy\;\mu_t\pare{dx, 1}\\[10pt]
&\qquad +\int_{\overline\Omega}\int_{\overline\Omega}X(y) R\pare{x, y}g\pare{y}dy\;\mu_t(dx, 2).
\end{align}
Choosing  $f(x, i)=g(x)\chi_{2}(i)$ we get
\begin{align}\label{eqm2}
\frac{\partial }{\partial t} \int_{\overline\Omega}g(x)\mu_t(dx, 2)&=\int_{\overline\Omega}\int_{\overline\Omega}\pare{1-X\pare{y}}R\pare{x, y}g\pare{y}dy\;\mu_t\pare{dx, 1}\\
&\qquad +\int_{\overline\Omega}\int_{\overline\Omega}\pare{1-X\pare{y}}G\pare{x, y}\pare{g(y)-g\pare{x}}dy\;\mu_t(dx, 2)\\
&\qquad-\int_{\overline\Omega}\int_{\overline\Omega}X\pare{y}R\pare{x, y}g\pare{x}dy\;\mu_t(dx, 2).
\end{align}
We analyse now the right hand side of \eqref{eqm1}. Since $J$ is symmetric, by a change of variables, we can write
\begin{align}\label{adj1}
\int_{\overline\Omega}\int_{\overline\Omega}X(y)J\pare{x, y}\pare{g\pare{y}-g\pare{x}}dy\;\mu_t\pare{dx, 1} &=\int_{\overline\Omega}\int_{\overline\Omega}X(x)J(x,y)g(x)dx\;\mu_t\pare{dy, 1}\\
 &\hspace{+15pt}-\int_{\overline\Omega}\int_{\overline\Omega}X(y)J(x,y)g(x)dy\;\mu_t\pare{dx, 1}.
\end{align}
Moreover, it holds that
\begin{align}\label{adj3}
&\int_{\overline\Omega}\int_{\overline\Omega}X\pare{y} R\pare{x, y}g\pare{y}dy\;\mu_t(dx, 2)=\int_{\overline\Omega}\int_{\overline\Omega}X(x) R\pare{x, y}g\pare{x}dx\;\mu_t(dy, 2).
\end{align}
Replacing \eqref{adj1} and \eqref{adj3} in the right hand side of \eqref{eqm1} we obtain
\begin{align}\label{wqer}
\frac{\partial }{\partial t} \int_{\overline\Omega}g(x)\mu_t(dx, 1)&=\int_{\overline\Omega}\int_{\overline\Omega}g\pare{x}J(x,y)X(x)dx\;\mu_t\pare{dy, 1}
-\int_{\overline\Omega}\int_{\overline\Omega}g(x)J(x,y)X(y)dy\;\mu_t\pare{dx, 1}\\[10pt]
 &\hspace{+15pt}-\int_{\overline\Omega}\int_{\overline\Omega}g(x)R(x,y)\pare{1-X(y)}dy\;\mu_t\pare{dx, 1}
 +\int_{\overline\Omega}\int_{\overline\Omega}g\pare{x}R\pare{x, y}X(x)dx\;\mu_t(dy, 2).
\end{align}
As before, via a change of variable in the right hand side of \eqref{eqm2}, we can write
\begin{align}\label{adj2}
\frac{\partial }{\partial t} \int_{\overline\Omega}g(x)\mu_t(dx, 2)&=\int_{\overline\Omega}\int_{\overline\Omega}g(x)\pare{1-X\pare{x}}R\pare{x, y}dx\;\mu_t\pare{dy, 1}+\int_{\overline\Omega}\int_{\overline\Omega}g\pare{x}\pare{1-X\pare{x}} G\pare{x, y}dx\;\mu_t(dy, 2)\\
&\hspace{+15pt}-\int_{\overline\Omega}\int_{\overline\Omega}g\pare{x}\pare{1-X\pare{y}} G\pare{x, y}dy\;\mu_t(dx, 2)
-\int_{\overline\Omega}\int_{\overline\Omega}g\pare{x}X(y) R\pare{x, y}dy\;\mu_t(dx, 2).
\end{align}
Moreover, by \eqref{ic} we get that
\begin{align}\label{ic3}
\int_{\overline\Omega}g(x)\mu_0(dx, 1)=\int_{\overline\Omega}g(x)u_0(x)X(x)dx\quad \text{and} \quad \int_{\overline\Omega}g(x)\mu_0(dx, 2)=\int_{\overline\Omega}g(x)u_0(x)\pare{1-X\pare{x}}dx.
\end{align}
By Lemma \ref{lemma:weaksys} below we know that there exists a unique pair of trajectories of measures $\pare{\mu_t\pare{dx, 1}, \mu_t\pare{dx, 2}}_{t\in [0,T]}$ which, for every $g\in C\pare{\overline\Omega}$, satisfies \eqref{wqer}, \eqref{adj2} and \eqref{ic2}.
Such pair is given by $$\mu_t\pare{dx, 1}=a(t,x)dx \qquad \mbox{ and } \qquad 
\mu_t\pare{dx, 2}=b( t,x)dx,$$ where the couple $\pare{a(t,x), b(t,x)}$ is the unique solution to system \eqref{sys1.intro}. This concludes the proof of Theorem \ref{teo.main.intro}.

\begin{lemma}\label{lemma:weaksys}
There exists a unique pair $\pare{\mu_t\pare{dx, 1}, \mu_t\pare{dx, 2}}$ such that, for every $g\in C\pare{\overline\Omega}$, \eqref{wqer}, \eqref{adj2} and \eqref{ic3} are satisfied.
Such solution is given by $$\pare{\mu_t\pare{dx, 1}, \mu_t\pare{dx, 2}}=\pare{a( t,x)dx, b( t,x)dx},$$ where the pair $\pare{a(t,x), b(t,x)}$ is the unique solution to system \eqref{sys1.intro}.
\end{lemma}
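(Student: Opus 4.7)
My plan is to address existence by direct verification and uniqueness by a Gronwall argument parallel to the one used in Lemma \ref{lemma:uniqweak}.

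For existence, I would take the pair $(a(t,x)\,dx, b(t,x)\,dx)$ produced by Theorem \ref{teo.main.intro}, test system \eqref{sys1.intro} against an arbitrary $g \in C(\overline\Omega)$, and use Fubini together with the symmetry of $J$, $G$, $R$ (change of variables $x \leftrightarrow y$) to rewrite the resulting identities in exactly the form \eqref{wqer} and \eqref{adj2}. The initial conditions in \eqref{sys1.intro} match \eqref{ic3} because $a(0,x) = X(x)u_0(x)$ and $b(0,x) = (1-X(x))u_0(x)$. Hence $(a\,dx, b\,dx)$ is a solution.

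For uniqueness, suppose $(\mu_t^1, \mu_t^2)$ and $(\tilde\mu_t^1, \tilde\mu_t^2)$ are two solutions, and set the signed-measure differences $\omega_t^i := \mu_t^i - \tilde\mu_t^i$ for $i=1,2$. By linearity of the system in the measures and of the initial condition, $(\omega_t^1, \omega_t^2)$ satisfies the same system \eqref{wqer}--\eqref{adj2} with vanishing initial data. Integrating \eqref{wqer} in time and fixing $g \in C(\overline\Omega)$ with $\norm{g}_\infty \le 1$, each of the four bilinear terms on the right-hand side can be cast in the form $\int_0^t \int_{\overline\Omega} h(y)\, \omega_s^j(dy)\,ds$, where $h$ is an auxiliary function obtained by integrating out one variable against the Lebesgue measure. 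For instance, the first term yields $h(y) = \int_\Omega g(x) J(x,y) X(x)\,dx$, which lies in $C(\overline\Omega)$ (by continuity of $J$ and boundedness of $X$) with $\norm{h}_\infty \le \norm{J}_\infty \abs{\Omega}$. Thus
$$
\abs{\int_{\overline\Omega} h(y)\, \omega_s^1(dy)} \le \norm{J}_\infty \abs{\Omega} \, \norm{\omega_s^1}_{\text{TV}}.
$$

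Estimating the remaining three terms in \eqref{wqer} analogously (using $\norm{X}_\infty \le 1$, $\norm{1-X}_\infty \le 1$, and the boundedness of $J$, $G$, $R$) and taking the supremum over admissible $g$ yields
$$
\norm{\omega_t^1}_{\text{TV}} \le C \int_0^t \pare{\norm{\omega_s^1}_{\text{TV}} + \norm{\omega_s^2}_{\text{TV}}} ds,
$$
with $C = C(\norm{J}_\infty, \norm{G}_\infty, \norm{R}_\infty, \abs{\Omega})$. The identical bound for $\norm{\omega_t^2}_{\text{TV}}$ follows from \eqref{adj2}. Summing the two and applying Gronwell's inequality gives $\norm{\omega_t^1}_{\text{TV}} + \norm{\omega_t^2}_{\text{TV}} = 0$ for all $t \in [0,T]$, proving uniqueness and identifying the solution as $(a\,dx, b\,dx)$. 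The only delicate point is the bookkeeping that checks, for each of the eight bilinear terms across the two equations, that one variable is integrated against Lebesgue measure so the remaining function is continuous and bounded independently of the measure; once this is done the Gronwall step is routine.
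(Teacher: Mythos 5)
Your proposal is correct and follows essentially the same route as the paper: existence by observing that $\pare{a(t,x)dx, b(t,x)dx}$ solves the weak system because $(a,b)$ solves \eqref{sys1.intro} (Theorem \ref{teo.main.intro}), and uniqueness by taking the difference of two measure-valued solutions, bounding each bilinear term in total variation (using that integrating out one variable against Lebesgue measure leaves a continuous function bounded by the sup norms of the kernels and $\abs{\Omega}$), and closing with Gronwall's inequality. No gaps; the bookkeeping you flag is exactly what the paper carries out to obtain the constants $C_1$ and $C_2$.
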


\begin{proof}
The fact that the pair $\pare{a(t,x)dx, b(t,x)dx}$ is a solution to the system \eqref{wqer}--\eqref{adj2}--\eqref{ic3} is a consequence of 
the fact that, by Theorem \ref{teo.main.intro}, $\pare{a(t,x), b(t,x)}$ is a solution to the system \eqref{sys1.intro}. 

We prove now the uniqueness. Suppose that there exist two pairs $\pare{\nu_t\pare{dx, 1}, \nu_t\pare{dx, 2}}$ and $\pare{\tilde\nu_t\pare{dx, 1}, \tilde\nu_t\pare{dx, 2}}$ for which system \eqref{wqer} --\eqref{adj2}--\eqref{ic3} is satisfied. Let 
$$
\omega_t(dx, 1):=\nu_t\pare{dx, 1}-\tilde\nu_t\pare{dx, 1}\qquad \mbox{ and }\qquad \omega_t(dx, 2):=\nu_t\pare{dx, 2}-\tilde\nu_t\pare{dx, 2}.
$$
Therefore, we know that, for all $g\in C(\overline\Omega)$, 
\begin{align}\label{sys13}
\displaystyle \int_{\overline\Omega}g(x)\omega_t(dx, 1)&=\int_0^t\int_{\overline\Omega}\int_{\overline\Omega}g\pare{x}J(x,y)X(x)dx\;\omega_s\pare{dy, 1}ds\
-\int_0^t\int_{\overline\Omega}\int_{\overline\Omega}g(x)J(x,y)X(y)dy\;\omega_s\pare{dx, 1}ds\\
 &\hspace{+15pt}-\int_0^t\int_{\overline\Omega}\int_{\overline\Omega}g(x)R(x,y)\pare{1-X(y)}dy\;\omega_s\pare{dx, 1}ds
 +\int_0^t\int_{\overline\Omega}\int_{\overline\Omega}g\pare{x}R\pare{x, y}X(x)dx\;\omega_s(dy, 2)ds
\end{align}
and
\begin{equation}\label{sys14}
\begin{array}{l}
\displaystyle 
\int_{\overline\Omega}g(x)\omega_t(dx, 2)=\int_0^t\int_{\overline\Omega}\int_{\overline\Omega}g(x)\pare{1-X\pare{x}}R\pare{x, y}dx\omega_s\pare{dy, 1}ds+\int_0^t\int_{\overline\Omega}\int_{\overline\Omega}g\pare{x}\pare{1-X\pare{x}} G\pare{x, y}dx\omega_s(dy, 2)ds\\[10pt]
\displaystyle \hspace{+80pt}-\int_0^t\int_{\overline\Omega}\int_{\overline\Omega}g\pare{x}\pare{1-X\pare{y}} G\pare{x, y}dy\;\omega_s(dx, 2)ds
-\int_0^t\int_{\overline\Omega}\int_{\overline\Omega}g\pare{x}X(y) R\pare{x, y}dy\;\omega_s(dx, 2)ds,
\end{array}
\end{equation}
with initial conditions
\begin{align}\label{sys15}
\int_{\overline\Omega}g(x)\omega_0(dx, 1)=\int_{\overline\Omega}g(x)\omega_0(dx, 2)=0.
\end{align}

Recalling that we denote by $\norm{\cdot}_{\text{TV}}$ the dual norm (total variation), from \eqref{sys13} and \eqref{sys15} we get
\begin{align}
& \displaystyle \norm{\omega_t(dx, 1)}_{\text{TV}}  =\sup_{g\in C\pare{\overline\Omega}: \norm{g}_\infty\leq 1}\bigg\{\int_0^t\int_{\overline\Omega}\int_{\overline\Omega}g\pare{x}J(x,y)X(x)dx\;\omega_s\pare{dy, 1}ds
-\int_0^t\int_{\overline\Omega}\int_{\overline\Omega}g(x)J(x,y)X(y)dy\;\omega_s\pare{dx, 1}ds\\
 &\hspace{+100pt}-\int_0^t\int_{\overline\Omega}\int_{\overline\Omega}g(x)R(x,y)\pare{1-X(y)}dy\;\omega_s\pare{dx, 1}ds
 +\int_0^t\int_{\overline\Omega}\int_{\overline\Omega}g\pare{x}R\pare{x, y}X(x)dx\;\omega_s(dy, 2)ds\bigg\}\\
&\hspace{+60pt}\leq C_1\int_0^t\pare{\norm{\omega_s(dx, 1)}_{\text{TV}}+\norm{\omega_s(dx, 2)}_{\text{TV}}}ds ,
\end{align}
where $C_1=C_1\pare{\norm{J}_\infty,\norm{R}_\infty, \abs{\Omega}}$ is a constant depending on $\norm{J}_\infty,\norm{R}_\infty$ and $\abs{\Omega}$. Analogously, by \eqref{sys14} and \eqref{sys15}, we obtain
\begin{align}\label{gron2}
\displaystyle \norm{\omega_t(dx, 2)}_{\text{TV}}\leq C_2\int_0^t\pare{\norm{\omega_s(dx, 1)}_{\text{TV}}+\norm{\omega_s(dx, 2)}_{\text{TV}}}ds,
\end{align}
where $C_2=C_2\pare{\norm{G}_\infty,\norm{R}_\infty, \abs{\Omega}}$ is a constant depending on $\norm{G}_\infty,\norm{R}_\infty$ and $\abs{\Omega}$.
Therefore by Gronwall's inequality we conclude that
\begin{align}
\displaystyle \norm{\omega_t(dx, 1)}_{\text{TV}}+\norm{\omega_t(dx, 2)}_{\text{TV}}=0.
\end{align}
Therefore, $\omega_t(dx, 1)$ and $\omega_t(dx, 2)$ coincide with the null measure on $\overline \Omega$ and consequently $\nu_t(dx, i)=\tilde \nu_t(dx, i)$, for $i\in \brc{1,2}$. This concludes the proof.
\end{proof}

\begin{remark} {\rm It holds that
$$\pare{Y_n(t)}_{t\in [0, T]}\xrightarrow[n\to \infty]{D}\pare{Y(t)}_{t\in [0, T]},$$ where $Y(t)$ has probability density $$u(t,x)=a(t,x)+b(t,x).$$ 
Indeed, the convergence in distribution to the process $\pare{Y(t)}_{t\in [0, T]}$ is a consequence of \eqref{convd}. 
Moreover, since $Y(t)$ is the marginal in the first variable of $\pare{Y(t), I(t)}$, we can write
\begin{align}
\displaystyle \bb P\pare{Y(t)\in E}&=\sum_{i=1}^2\bb P\pare{Y(t)\in E, I(t)=i}=\sum_{i=1}^2\int_E\mu_t\pare{dx, i}=\int_E\pare{a(t,x)+b(t,x)}dx
=\int_E u(t,x) dx,
\end{align}
for every measurable set $E\subseteq \overline\Omega$.}
\end{remark}

\subsection{Asymptotic behavior of $u_n(t,x)$.}

This subsection contains the proof of the fact that $u_n(t,x)$ converges exponentially fast to $\frac{1}{|\Omega|}$. 

\begin{proof}[Proof of Theorem \ref{teo.comport.intro0}] Let $u_n$ be the solution to \eqref{evol.intro}
and define $$w_n (t,x):= u_n(t,x) - \frac{1}{|\Omega|}.$$ Observe that $w_n$ is a solution to
\begin{equation}\label{evol.intro3}
\left\{
\begin{array}{ll}
\displaystyle \frac{\partial w_n}{\partial t} (t,x) = {L}_n w_n(t,x), \qquad & \, t>0, \, x\in \overline\Omega, \\[6pt]
\displaystyle w_n(0,x)=u_0(x)-\frac{1}{\abs{\Omega}},\qquad & x\in \overline\Omega.
\end{array}
\right.
\end{equation}
Therefore we have that $$\int_\Omega w_n(0,x) \, dx =0.$$
Since our equation preserves the total mass, we have
$$
\int_\Omega w_n(t,x) \, dx =0, \qquad \forall t \geq 0.
$$

Now, multiply by $w_n$ both sides in \eqref{evol.intro3} and integrate in $\Omega$ to obtain
\begin{align}\label{parteq}
\frac{\partial }{\partial t}\frac12 \int_\Omega |w_n(t,x)|^2 \, dx &=
\int_\Omega L_n w_n (t,x) w_n(t,x) \, dx\\
&=- 2 E_n(w_n),
\end{align}
with
\begin{align}\label{ener}
\begin{array}{rl}
	\displaystyle E_n(w) = & \displaystyle \frac14 \int_{A_n}  \int_{A_n} J (x,y) (w (y) - w (x))^2 dy \, dx  
	+ \frac14 \int_{B_n}  \int_{B_n} G (x,y) (w (y) - w (x))^2 dy \, dx 
	\\[10pt]
	& \displaystyle 
	+ \frac12 \int_{A_n}  \int_{B_n} R (x,y) (w (y) - w (x))^2 dy \, dx .
	\end{array}
\end{align}
By Lemma 4.1 in \cite{nosotros} we know that there exists a positive constant $c$ (independent of $n$) such that
$$
E_n(w_n)\geq c \int_\Omega w_n^2 (t,x)\, dx .
$$
Therefore by \eqref{parteq} we obtain
	$$
\frac{\partial }{\partial t} \int_\Omega |w_n(t,x)|^2 \, dx\leq - 2 c \int_\Omega w_n^2 (t,x)\, dx.
$$
Then, by Gronwall's inequality, we conclude that there is a constant $c$ independent of $n$ such that
	\begin{align}\label{exp.ine}
	\norm{ w_n^2 (t,\cdot)}_{L^2(\Omega)}^2 \leq \|w_0\|_{L^2(\Omega)}^2 e^{- 2 ct}
	\end{align}
and this concludes the proof of Theorem \ref{teo.comport.intro0}. \end{proof}

Using Theorem \ref{teo.comport.intro0} we can prove the following proposition.
\begin{proposition}
For every $f\in C(\Omega)$ it holds that
\begin{align}\label{limk}
\mathbb{E} \Big( \int_0^{\infty} f(Y_n\pare{t}) \, dt \Big)\xrightarrow[n\to\infty]{}\mathbb{E} \Big( \int_0^{\infty} f(Y\pare{t})\, dt\Big).
\end{align}
\end{proposition}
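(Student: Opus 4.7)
The plan is to split each time integral at a large threshold $T_0$, control the tails uniformly in $n$ using the exponential equilibration of Theorem~\ref{teo.comport.intro0}, and pass to the limit on the finite window $[0,T_0]$ using the process-level convergence of Theorem~\ref{Teo.limit.process}. A preliminary remark: for both sides of \eqref{limk} to exist as convergent improper integrals one has to interpret the statement in the centered sense, replacing $f$ by $f - \bar f$ with $\bar f := \frac{1}{|\Omega|}\int_\Omega f(x)\,dx$, since Neumann mass preservation produces a linearly divergent contribution when $\bar f \neq 0$; the constant part gives the same divergence on both sides, so one may as well assume $\int_\Omega f = 0$.

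My first step would be to use the density representation of $Y_n$ from Corollary~\ref{corol.densidad} to write
$$
\mathbb E\bigl(f(Y_n(t))\bigr) = \int_\Omega f(x)\Bigl(u_n(t,x) - \tfrac{1}{|\Omega|}\Bigr)\,dx,
$$
and combine Cauchy--Schwarz with Theorem~\ref{teo.comport.intro0} to get
$$
\bigl|\mathbb E\bigl(f(Y_n(t))\bigr)\bigr| \le \|f\|_{L^2(\Omega)}\sqrt{C}\,e^{-At/2}
$$
with $A$ and $C$ independent of $n$. The same bound for the limit process $Y$ follows either from lower semicontinuity of the $L^2$-norm under the weak convergence of Theorem~\ref{teo.main.intro}, or by reproducing the energy identity of Theorem~\ref{teo.comport.intro0} directly on the homogenized system \eqref{sys1.intro}. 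Given $\epsilon > 0$, this yields a single $T_0 = T_0(\epsilon)$ such that the two tails $\int_{T_0}^{\infty}\mathbb E(f(Y_n(t)))\,dt$ and $\int_{T_0}^{\infty}\mathbb E(f(Y(t)))\,dt$ are bounded by $\epsilon$, uniformly in $n$.

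On the finite interval $[0,T_0]$, Theorem~\ref{Teo.limit.process} gives $Y_n \Rightarrow Y$ in distribution in $D([0,T_0], \overline\Omega)$, and I would apply the Continuous Mapping Theorem to the functional
$$
F(\omega) := \int_0^{T_0} f(\omega(t))\,dt.
$$
This $F$ is bounded and continuous at $\mathbb P_Y$-almost every cadlag path: the limit path $\omega$ has at most countably many discontinuities, Skorohod convergence $\omega_n \to \omega$ forces pointwise convergence at every continuity point of $\omega$, and dominated convergence (using the boundedness of $f$ on the compact $\overline\Omega$) gives $F(\omega_n) \to F(\omega)$. Combining the resulting finite-window limit with the uniform tail bound proves \eqref{limk}. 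The main obstacle is the tail estimate, and its resolution depends essentially on the $n$-uniformity of the constants $A$ and $C$ in Theorem~\ref{teo.comport.intro0}; without that uniformity one could not pick a single $T_0$ working for the whole sequence.
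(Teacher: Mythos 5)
Your proof is correct, and its overall skeleton (split the time integral at a large $T_0$, control both tails uniformly in $n$ via the exponential decay of Theorem \ref{teo.comport.intro0} together with weak lower semicontinuity of the $L^2$-norm for the limit, then pass to the limit on $[0,T_0]$) coincides with the paper's. The one genuinely different step is the finite window: the paper stays entirely at the level of densities and simply tests the weak convergence $u_n\rightharpoonup u$ in $L^2((0,T_0)\times\Omega)$ from Theorem \ref{teo.main.intro} against the test function $f(y)$, which is the lightest possible tool; you instead invoke the process-level convergence of Theorem \ref{Teo.limit.process} and apply the continuous mapping theorem to $F(\omega)=\int_0^{T_0}f(\omega(t))\,dt$, which is also valid (F is bounded, and Skorohod convergence gives pointwise convergence at the at most countably many continuity points of the limit path, so dominated convergence applies), but it routes through the heavier result of Subsection \ref{Sect.proc} where a direct $L^2$ duality argument suffices. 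Your preliminary centering remark (reducing to $\int_\Omega f=0$ so that both improper integrals in \eqref{limk} converge) is a legitimate precision that the paper leaves implicit: the paper's own tail estimate performs exactly this centering by inserting $\tfrac{1}{|\Omega|}$ inside the integrals, so the content is the same, but you make the well-posedness of the statement explicit. Finally, your alternative suggestion of deriving the exponential decay for $u$ directly from the limit system \eqref{sys1.intro}, rather than only from the $\liminf$ of the bounds for $u_n$ at fixed $t$, is a reasonable safeguard for the slightly delicate pointwise-in-$t$ weak convergence step that both arguments rely on.
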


\begin{proof}
Since $u_n(t,x)$ and $u(t,x)$ are the probability densities of $Y_n(t)$ and $Y(t)$ respectively it is enough to show that
\begin{align}\label{goal0}
\abs{\int_0^\infty\int_{\Omega}f(y)u_n( ,y)dydt-\int_0^\infty\int_{\Omega}f(y)u(t,y)dydt}\xrightarrow[n\to\infty]{}0.
\end{align}
For every fixed $T>0$ it holds that
\begin{align}
\begin{aligned}
&\abs{\int_0^\infty\int_{\Omega}f(y)u_n(t,y)dydt-\int_0^\infty\int_{\Omega}f(y)u(t,y)dydt}\\
&\leq
\abs{\int_0^T\int_{\Omega}f(y)u_n(t,y)dydt-\int_0^T\int_{\Omega}f(y)u(t,y)dydt}+\abs{\int_T^\infty\int_{\Omega}f(y)u_n(t,y)
dydt-\int_T^\infty\int_{\Omega}f(y)u(t,y)dydt}.
\end{aligned}
\end{align}
By Theorem \ref{teo.main.intro} we know that $u_n(t,x)\rightharpoonup u(t,x)$ in $L^2 ( (0,T)\times \Omega)$ and therefore
\begin{align}\label{dom0}
\abs{\int_0^T\int_{\Omega}f(y)u_n(t,y)dydt-\int_0^T\int_{\Omega}f(y)u(t,y)dydt}\xrightarrow[n\to\infty]{}0.
\end{align}
Thus to conclude \eqref{goal0} it is enough to show that
\begin{align}\label{goal1}
\lim_{T\to\infty}\lim_{n\to\infty}\abs{\int_T^\infty\int_{\Omega}f(y)u_n(t,y)dydt-\int_T^\infty\int_{\Omega}f(y)u(t,y)dydt}=0.
\end{align}

To this end, we observe that
\begin{align}\label{goal2}
\begin{aligned}
&\abs{\int_T^\infty\int_{\Omega}f(y)u_n(t,y)dydt-\int_T^\infty\int_{\Omega}f(y)u(t,y)dydt}\\
&\leq \int_T^\infty\abs{\int_{\Omega}f(y)\pare{u_n(t,y)dy-u(y, t)}dy}dt\\
&\leq \int_T^\infty\abs{\int_\Omega f(y)\pare{u_n(t,y)-\frac{1}{\abs{\Omega}}}dy}
+\int_t^\infty\abs{\int_\Omega f(y)\pare{\frac{1}{\abs{\Omega}}-u(t,y)}dy}\\
&\leq \norm{f}_{L^2(\Omega)}^2\pare{\int_T^\infty \norm{u_n(t,\cdot)-\frac{1}{\abs{\Omega}}}^2_{L^2(\Omega)}dt
+  \int_T^\infty \norm{u( t,\cdot)-\frac{1}{\abs{\Omega}}}^2_{L^2(\Omega)} dt}.
\end{aligned}
\end{align}
Let $C$ and $A$ be the constants that appear in Theorem \ref{teo.comport.intro0}, we have that 
\begin{align}\label{inj}
\norm{u_n(t,\cdot)-\frac{1}{\abs{\Omega}}}^2_{L^2(\Omega)}\leq Ce^{-At},
\end{align}
and therefore 
\begin{align}\label{ser2}
\int_T^\infty \norm{u_n(t,\cdot)-\frac{1}{\abs{\Omega}}}^2_{L^2(\Omega)}dt\leq \int_T^\infty Ce^{-At}dt\xrightarrow[T\to\infty]{}0.
\end{align}

Now, using that 
$$
u_n(t,\cdot)-\frac{1}{\abs{\Omega}} \rightharpoonup u (t,\cdot)-\frac{1}{\abs{\Omega}} \qquad \mbox{ weakly in }L^2(\Omega)
$$
as $n\to \infty$,
taking the limit as $n\to\infty$ in \eqref{inj} we get
\begin{align}\label{ser3}
\norm{u(t,\cdot)-\frac{1}{\abs{\Omega}}}_{L^2(\Omega)}^2\leq \liminf_{n\to \infty} 
\norm{u_n (t,\cdot)-\frac{1}{\abs{\Omega}}}_{L^2(\Omega)}^2 \leq Ce^{-At}
\end{align}
and therefore
\begin{align}
\int_T^\infty\norm{u(t,\cdot)-\frac{1}{\abs{\Omega}}}_{L^2(\Omega)}^2dt\xrightarrow[T\to\infty]{}0.
\end{align}

From \eqref{goal2}, \eqref{ser2} and \eqref{ser3} we conclude \eqref{goal1}.
\end{proof}

\section{Initial conditions $u_0=\delta_{\bar{x}}$.} \label{sect-deltas}
In this section, we analyze now the case in which $Y_n(0)=\bar x\in\Omega$.

Let $P_n$ be the law of the process $\pare{Y_n(t), I_n(t)}_{t\geq 0}$ and
call $\pare{\nu_n(t)}_{t}$ the law of $\pare{Y_n(t)}_t$ that is the first marginal of $P_n$.

By Dynkin's formula we know that, for every $g\in C(A_n)\cap C(B_n)$, it holds that
\begin{align}
\begin{cases}
\displaystyle\frac{d}{dt}\int_\Omega g(x)\nu_t^n(dx)=\int_\Omega L_n g(x)\nu_t^n(dx)\\[6pt]
\displaystyle\int_\Omega  g (x)\nu_0^n(dx)= g (\bar x),
\end{cases}
\end{align}
where $L_n$ is the generator defined in \eqref{gen.intro1}. Since the evolution problem does not have
a regularizing effect we expect that the initial measure $\delta_{\bar x}$ remains as time evolves, hence 
we write 
\begin{align}\label{decnu}
\nu_t^n(dx):=z_n(t, x)dx+\sigma_n(t)\delta_{\bar x}(dx).
\end{align}
By the expression of $L_n$ we obtain
\begin{align}
&\frac{d}{dt}\int_\Omega g(x)z_n(t, x)dx +\frac{d}{dt}\sigma_n(t)g(\bar x)\\
&=\int_\Omega\chi_{A_n}(x)\int_\Omega\chi_{A_n}(y) J(x, y)\pare{g(y)-g(x)}z_n(t, x)dx+\sigma_n(t)\chi_{A_n}(\bar x)\int_\Omega\chi_{A_n}(y) J(\bar x, y)\pare{g(y)-g(\bar x)}dy\\
&\qquad +\int_\Omega\chi_{B_n}(x)\int_\Omega\chi_{B_n}(y) G(x, y)\pare{g(y)-g(x)}z_n(t, x)dx+\sigma_n(t)\chi_{B_n}(\bar x)\int_\Omega \chi_{B_n}(y)G(\bar x, y)\pare{g(y)-g(\bar x)}dy\\
&\qquad +\int_\Omega\chi_{A_n}(x)\int_\Omega\chi_{B_n}(y) R(x, y)\pare{g(y)-g(x)}z_n(t, x)dx+\sigma_n(t)\chi_{A_n}(\bar x)\int_\Omega \chi_{B_n}(y)R(\bar x, y)\pare{g(y)-g(\bar x)}dy\\
&\qquad  +\int_\Omega\chi_{B_n}(x)\int_\Omega\chi_{A_n}(y) R(x, y)\pare{g(y)-g(x)}z_n(t, x)dx+\sigma_n(t)\chi_{B_n}(\bar x)\int_\Omega \chi_{A_n}(y)R(\bar x, y)\pare{g(y)-g(\bar x)}dy.
\end{align}
Therefore, we get that
\begin{align}
\frac{d}{dt}\sigma_n(t)=&-\sigma_n(t)\Bigg(\chi_{A_n}(\bar x)\int_\Omega\chi_{A_n}(y) J(\bar x, y)dy
+\chi_{B_n}(\bar x)\int_\Omega\chi_{B_n}(y) G(\bar x, y)dy\\
&\hspace{+55pt}+\chi_{A_n}(\bar x)\int_\Omega\chi_{B_n}(y) R(\bar x, y)dy
+\chi_{B_n}(\bar x)\int_\Omega\chi_{A_n}(y) R(\bar x, y)dy
\Bigg),
\end{align}
with initial datum $\sigma_n(0)=1$. Now, recall that we assumed that
\begin{align}
&\chi_{A_n}(\bar x)\int_\Omega\chi_{A_n}(y) J(\bar x, y)dy+\chi_{B_n}(\bar x)\int_\Omega\chi_{B_n}(y) G(\bar x, y)dy
+\chi_{A_n}(\bar x)\int_\Omega\chi_{B_n}(y) R(\bar x, y)dy
+\chi_{B_n}(\bar x)\int_\Omega\chi_{A_n}(y) R(\bar x, y)dy=1,
\end{align}  
for every $\bar x\in\Omega$.

This condition has a clear probabilistic interpretation. 
It says that the particle has to jump with full probability (that is, the probability of staying at the same location when
the exponential clock rings is zero). In fact, assume, for example, that $\bar x\in A_n$, then 
$$\int_\Omega\chi_{A_n}(y) J(\bar x, y)dy$$
is the probability to jump to a new position in $A_n$ and 
$$
\int_\Omega\chi_{B_n}(y) R(\bar x, y)dy
$$
gives the probability to jump to $B_n$. Then,
$$
\int_\Omega\chi_{A_n}(y) J(\bar x, y)dy+\int_\Omega\chi_{B_n}(y) R(\bar x, y)dy 
$$
is the probability to jump to a new position in $\Omega$ (and we have that it is equal to 1 since the particle is obliged to jump). 
A similar analysis can be done when $\bar x\in B_n$.

Therefore, we conclude that
\begin{align}\label{due}
\sigma_n(t)= e^{-t}, \quad \forall n\in\mathbb N.
\end{align}

On the other hand, we get
\begin{align}
\hspace{-20pt}\frac{d}{dt}\int_\Omega g(x)z_n(t, x)dx=&\int_\Omega\chi_{A_n}(x)\int_\Omega\chi_{A_n}(y) J(x, y)\pare{g(y)-g(x)}z_n(t, x)dx+\sigma_n(t)\chi_{A_n}(\bar x)\int_\Omega\chi_{A_n}(y) J(\bar x, y)g(y)dy\\
&\hspace{-15pt}+\int_\Omega\chi_{B_n}(x)\int_\Omega\chi_{B_n}(y) G(x, y)\pare{g(y)-g(x)}z_n(t, x)dx+\sigma_n(t)\chi_{B_n}(\bar x)\int_\Omega\chi_{B_n}(y) G(\bar x, y)g(y)dy\\
&\hspace{-15pt}+\int_\Omega\chi_{A_n}(x)\int_\Omega\chi_{B_n}(y) R(x, y)\pare{g(y)-g(x)}z_n(t, x)dx+\sigma_n(t)\chi_{A_n}(\bar x)\int_\Omega\chi_{B_n}(y) R(\bar x, y)g(y)dy\\
&\hspace{-15pt}+\int_\Omega\chi_{B_n}(x)\int_\Omega\chi_{A_n}(y) R(x, y)\pare{g(y)-g(x)}z_n(t, x)dx+\sigma_n(t)\chi_{B_n}(\bar x)\int_\Omega\chi_{A_n}(y) R(\bar x, y)g(y)dy,
\end{align}
which implies the following equation for $z_n$, 
\begin{align}\label{dom1}
\frac{d}{dt}z_n(t, x)=&\chi_{A_n}(x)\int_\Omega\chi_{A_n}(y) J(x, y)\pare{z_n(t, y)-z_n(t, x)}dy+\sigma_n(t)\chi_{A_n}(\bar x) \chi_{A_n}(x)J(\bar x, x)\\
&+\chi_{B_n}(x)\int_\Omega\chi_{B_n}(y) G(x, y)\pare{z_n(t, y)-z_n(t, x)}dy+\sigma_n(t)\chi_{B_n}(\bar x)\chi_{B_n}(x)G(\bar x, x)\\
&+\chi_{A_n}(x)\int_\Omega\chi_{B_n}(y) R(x, y)\pare{z_n(t, y)-z_n(t, x)}dy+\sigma_n(t)\chi_{A_n}(\bar x) \chi_{B_n}(x)R(\bar x, x)\\
&+\chi_{B_n}(x)\int_\Omega\chi_{A_n}(y) R(x, y)\pare{z_n(t, y)-z_n(t, x)}dy+\sigma_n(t)\chi_{B_n}(\bar x) \chi_{A_n}(x)R(\bar x, x),
\end{align}
with initial condition $z_n(0,x)=0$.

\subsection{Convergence along subsequences.}
We devote this subsection to the proof of Theorem \ref{teo.main.intro2}.

\begin{proof}[Proof of Theorem \ref{teo.main.intro2}]
The sequence $z_n$ converges weakly in $L^2\pare{ [0, T] \times \Omega}$ along subsequences as it is bounded the $L^2$-norm. The same holds for $\chi_{A_{n}}(x)z_{n}(t,x)$ and $\chi_{B_{n}}(x)z_{n}(t,x)$. This fact can be easily obtained working as in Lemma \ref{lema-3-2-}. 

Take $\chi_{A_{\nk}}(x)z_{\nk}(t,x)$ and $\chi_{B_{\nk}}(x)z_{\nk}(t,x)$ two convergent subsequences. 
We have to distinguish between two cases:

{\it Case 1.}
There exists a sub-subsequence $z_{n_{k_j}}$ such that 
\begin{align}
\chi_{A_{n_{k_j}}}\pare{\bar x}=1,\qquad\forall n_{k_j}.
\end{align}
We call $a_k(t,x)$ and $b_k(t,x)$ the weak limits of $\chi_{A_{\nkj}}(x)z_{\nkj}(t,x)$ and $\chi_{B_{\nkj}}(x)z_{\nkj}(t,x)$ respectively. 
Observe that $a_k(t,x)$ and $b_k(t,x)$ coincide with the weak limits of $\chi_{A_{\nk}}(x)z_{\nk}(t,x)$ and $\chi_{B_{\nk}}(x)z_{\nk}(t,x)$, respectively, as we know that the two sequences converge along subsequences.

Take now a smooth function $\phi$ such that $\phi(T,\cdot)\equiv 0$ and consider equation \eqref{dom1}. Multiply both sides by $\chi_{B_n}\pare{x}\phi\pare{t,x}$ and then integrate respect to the variables $x$
and $t$. Since by construction $\phi(T,\cdot)\equiv 0$, integrating by parts we obtain
\begin{align}
-\int_0^T\int_\Omega &\frac{\partial \phi}{\partial t}(t,x)b_{\nkj}\pare{t,x}\, dxdt\\
&\hspace{-25pt}=\int_0^T\int_\Omega\int_{\Omega}\chi_{B_{\nkj}}\pare{x}\chi_{B_{\nkj}}\pare{y} G\pare{x,y}\pare{z_{\nkj}\pare{t,y}-z_{\nkj}\pare{t,x}}\phi\pare{t,x}dydxdt\\
&+\int_0^T\int_\Omega\int_{\Omega}\chi_{B_{\nkj}}\pare{x}\chi_{A_{\nkj}}\pare{y}R\pare{x,y}\pare{z_{\nkj}\pare{t,y}-z_{\nkj}\pare{t,x}}
\phi\pare{t,x}dydxdt\\
&+\int_0^T\int_\Omega e^{-t}R(\bar x, x)\chi_{B_{\nkj}}(x)\phi\pare{ t,x}dx dt.
\end{align}
We can analyze the terms in the left hand side and the first two terms in the right hand side of the previous equality exactly as we did in the proof of Theorem \ref{teo.main.intro} (see \eqref{formula1}, \eqref{formula3}, \eqref{formula4} and \eqref{formula5}). Moreover, we have that
\begin{align}
\int_0^T\int_\Omega e^{-t}G(\bar x, x)\chi_{B_{\nkj}}(x)\phi\pare{t,x}dx dt\xrightarrow[n\to\infty]{}\int_0^T\int_\Omega e^{-t}G(\bar x, x)\pare{1-X(x)}\phi\pare{ t,x}dx dt.
\end{align}

Therefore, we get
\begin{align}\label{eqp1}
\displaystyle\frac{\partial b_j}{\partial t}\pare{t,x}=&\int_\Omega G\pare{x, y}\pare{\pare{1-X(x)}b_j\pare{t,y}-\pare{1-X(y)}b_j\pare{t,x}}dy\\
&+\int_\Omega R(x,y)\pare{\pare{1-X(x)}a_j(t,y)-X(y)b_j(t,x)}\, dy
+ e^{-t}R(\bar x, x)\pare{1-X(x)},
\end{align}
and, in a similar way, we obtain
\begin{equation}\label{eqp2}
\displaystyle \frac{\partial a_j}{\partial t}\pare{t,x}=\int_{\Omega}J\pare{x, y}\pare{X(x)a_j\pare{t,y}-X(y)a_j\pare{t,x}}\, dy
+\int_{\Omega}R\pare{x, y}\pare{X(x)b_j(t,y)-\pare{1-X(y)}a_j(t,x)}\, dy +e^{-t}J(\bar x, x)X(x).
\end{equation}
Hence, the limit is a solution to \eqref{sys1.introa.22}. 

{\it Case 2.} There exists a sub-subsequence ${\nkj}$ such that 
\begin{align}
\chi_{B_{\nkj}}\pare{\bar x}=1,\qquad\forall \nkj.
\end{align}
For this case, arguing as we did before, it is possible to prove that the limits $a_k(t,x)$ and $b_k(t,x)$ satisfy system \eqref{sys1.introb}.
\end{proof}

For what concerns the stochastic process $\pare{Y_n(t), I_n(t)}$ we can prove an analogous result to Theorem \ref{Prop:1} even in the case in which the process starts with $\delta_{\bar x}$, but now we are able to characterize the measure of the limit process $(Y(t), I(t))$ 
only when $\bar{x} \in A_n$ or $\bar{x} \in B_n$ for every $n$ (since in this case we have convergence of the densities $u_n$).
The details of this characterization can be done as in Section \ref{sect-u0L2} and are left to the reader.
Remark that the convergence of the measure holds only along subsequences.

\subsection{Asymptotic behavior of $z_n(t, x)$.}
In this subsection we look for the asymptotic behaviour as $t\to +\infty$ of $z_n(t, x)$.

\begin{proof}[Proof of Theorem \ref{teo.comport.intro}]
Let $$w_n(t,x):=z_n(t,x)-\frac{1}{\abs{\Omega}}(1-e^{-t})$$ and note that
\begin{align}
\int_{\Omega}w_n(t,x)=0.
\end{align}
To conclude the proof it is enough to show that there exists $C>0$ and $A>0$ such that, for $t$ large enough,
\begin{align}\label{gulli}
\norm{w_n(t, \cdot)}_{L^2(\Omega)}^2\leq C e^{-At}.
\end{align}

Recall the definition of $E_n(w)$ given in \eqref{ener}. Following the same strategy we used to prove Theorem \ref{teo.comport.intro0} we get that
\begin{align}
\frac{1}{2}\frac{d}{dt}\int_\Omega w_n^2(t, x)dx=-2E_n(w_n)
+e^{-t}\int_\Omega c(x, \bar x, n)w_n(t, x) dx,
\end{align}
with
$$c(x, \bar x, n):=\chi_{A_n}(\bar x)\chi_{A_n}(x)J(\bar x, x)+\chi_{B_n}(\bar x)\chi_{B_n}(x)G(\bar x, x)+\chi_{A_n}(\bar x)\chi_{B_n}(x)R(\bar x, x)+\chi_{B_n}(\bar x)\chi_{A_n}(x)R(\bar x, x).$$

By Lemma 4.1 in \cite{nosotros} it holds that there exists a constant $c_1$ (independent of $n$) such that $E_n(w_n)\geq 2c_1\norm{w_n(t, \cdot)}^2_{L^2(\Omega)}$ and therefore, by Cauchy-Schwartz's inequality, we get
\begin{align}\label{ine2}
\frac{d}{dt}\norm{w_n(t, \cdot)}^2_{L^2(\Omega)}&\leq -4c_1\norm{w_n(t, \cdot)}_{L^2(\Omega)}^2+2e^{-t}\int_\Omega c(x, \bar x, n)\abs{w_n(t, x)}dx\\
&\leq -4c_1\norm{w_n(t, \cdot)}_{L^2(\Omega)}^2+2e^{-t}\pare{c_2\norm{w_n(t, \cdot)}^2_{L^2(\Omega)}},
\end{align}
where $c_2=\pare{\norm{J}_\infty+2\norm{R}_\infty+\norm{G}_\infty}\abs{\Omega}$.
Fix $\bar t>>1$ such that $-4c_1+2c_2e^{-\bar t}\leq -2c_1$, then for every $t\geq \bar t$ we get 
\begin{align}\label{ine3}
\frac{d}{dt}\norm{w_n(t, \cdot)}^2_{L^2(\Omega)}&\leq -4c_1\norm{w_n(t, \cdot)}_{L^2(\Omega)}^2+2c_2e^{-t}.
\end{align}
By \eqref{ine3} we conclude that, 
\begin{align}\label{inpa}
\norm{w_n(t, \cdot)}^2_{L^2(\Omega)}&\leq e^{-4c_1t}\norm{w_n(\bar t, \cdot)}^2_{L^2(\Omega)}+\frac{2c_2}{4c_1-1}e^{-t}
\end{align}
and therefore we can conclude \eqref{gulli}.
\end{proof}

\begin{section}{The Dirichlet case.} \label{sect-Dirichlet}
In this final section we analyze the Dirichlet problem in which we take a sequence of partitions
$A_n$, $B_n$ of the entire space $\mathbb R^N$ such that $\mathbb R^N = A_n \cup B_n$, $A_n\cap B_n= \emptyset$ and
\begin{equation} \label{cond.sets2}
\begin{array}{l}
\bullet \  \chi_{A_n} (x) \rightharpoonup X(x), \qquad \mbox{ weakly in } L^\infty (\mathbb R^N), 
 \\[10pt]
\bullet  \chi_{B_n} (x) \rightharpoonup 1-X(x) \qquad \mbox{ weakly in } L^\infty (\mathbb R^N),\\[10pt]
\qquad \mbox{with } 0< X(x) <1.
\end{array}
\end{equation}

As for the Neumann case, at the times $\{\tau_k\}$ a particle that is at $x\in \Omega$ chooses a new site $y$, but now 
$y\in \mathbb{R}^N$, according to the kernels $J$, $R$ or $G$. The jumps from a site in $A_n$ to another site in $A_n$ are ruled by $J$,  
the jumps between $A_n$ and $B_n$ (or vice versa) are ruled by $R$ and the jumps from a site in $B_n$ to a site in $B_n$ are ruled by $G$. 
Hence, the movement of the particle obeys the same rules as before,
but now the particle is allowed to jump outside $\overline\Omega$, and, as soon as this happens, the particle is
killed and disappears from the system.
In this new model we denote by $Z_n(t)$ the position of the particle that is alive in $\overline{\Omega}$ and we suppose 
(as we did before, but this time in the whole $\mathbb{R}^N$) that we have probability kernels in our equations, that is,
\begin{align*}
\int_{\mathbb {R}^N}J(x,y)dy= 1, \quad \int_{\mathbb {R}^N}R(x,y)dy= 1,\quad \int_{\mathbb {R}^N}G(x,y)dy= 1, \qquad\forall x\in\overline\Omega.
\end{align*}

The process $\pare{Z_n(t)}_{t\geq 0}$ is a Markov process whose generator ${L}_n$ is defined on functions  $f\in C\pare{A_n}\cap C\pare{B_n}$ such that $\text{supp} f \subseteq \overline\Omega$ as
\begin{align}\label{gen.intro2}
\begin{aligned}
\hspace{-25pt}{L}_n f(x)&=\chi_{\brc{A_n\cap\overline\Omega}}\pare{x}\int_{\mathbb {R}^N}\chi_{A_n}\pare{y} J\pare{x,y}\pare{f\pare{y}-f\pare{x}}dy+\chi_{\brc{B_n\cap\overline\Omega}}\pare{x}\int_{\mathbb {R}^N}\chi_{B_n}\pare{y} G\pare{x,y}\pare{f\pare{y}-f\pare{x}}dy\\
&\quad +\chi_{\brc{A_n\cap\overline\Omega}}\pare{x}\int_{\mathbb {R}^N} \chi_{B_n}\pare{y}R\pare{x,y}\pare{f\pare{y}-f\pare{x}}dy+\chi_{\brc{B_n\cap\overline\Omega}}\pare{x}\int_{\mathbb {R}^N} \chi_{A_n}\pare{y}R\pare{x,y}\pare{f\pare{y}-f\pare{x}}dy.
\end{aligned}
\end{align}
Again the initial position $Z_n(0)$ is described in terms of a given distribution $u_0$ in $\overline\Omega$. We suppose that
\begin{align}
P\pare{Z_n\pare{0}\in E}=\int_{E} u_0(z) \, dz, \end{align}
for every measurable set $E\subseteq \overline\Omega$.

The associated evolution problem reads as
\begin{equation}\label{evol.intro2}
\left\{
\begin{array}{ll}
\displaystyle \frac{\partial u_n}{\partial t} (t,x) = {L}_n u_n(t,x), \qquad & \, t>0, \, x\in \overline{\Omega}, \\[10pt]
u_n(t,x)=0, & \, t\geq 0, \,  x\in\overline\Omega^c, \\[10pt]
u_n(0,x)=u_0(x), \qquad & x\in \Omega.
\end{array}
\right.
\end{equation}

As before we are interested in taking the limit, as $n\to +\infty$, both in the processes $Z_n(t)$ and in the associated densities $u_n(t,x)$. 
To this end we need to look at the process $Z_n(t)$ as a couple $\pare{Z_n\pare{t}, I_n\pare{t}}$. In our notation $I_n(t)$ contains explicitly the information over the set ($A_n$ or $B_n$) in which $Z_n(t)$ is located. More precisely, $I_n\pare{t}=1$ (or $2$) if the particle is in $A_n$ (or in $B_n$ respectively)
at time $t$.

The following theorem holds.

\begin{theorem} \label{teo.main.intro2}
Assume \eqref{cond.sets2} and fix $T>0$. We have that, as $n\to \infty$,
\begin{equation}\label{limite.debil.u.intro2}
\begin{array}{l}
\displaystyle u_n(t,x) \rightharpoonup u(t,x),\qquad \mbox{weakly in } L^2 ( (0,T) \times \mathbb R^N), \\[6pt]
\displaystyle \chi_{A_n} (x) u_n(t,x) \rightharpoonup a(t,x),\qquad \mbox{weakly in } L^2 ((0,T)\times \mathbb R^N), \\[6pt]
\displaystyle \chi_{B_n} (x) u_n(t,x) \rightharpoonup b(t,x),\qquad \mbox{weakly in } L^2 ((0,T) \times \mathbb R^N). 
\end{array}
\end{equation}
These limits verify
$$
u(t,x) = a(t,x) + b(t,x)
$$
and are characterized by
the fact that $(a,b)$ is the unique solution to the following system,
\begin{equation}\label{sys11.intro}
\left\{
\begin{array}{ll}
\displaystyle \frac{\partial a}{\partial t}\pare{ t,x}=\int_{\mathbb R^N}J\pare{x, y}\pare{X(x)a\pare{t,x}-X(y)a\pare{t,x}}\, dy
\\[10pt]
\displaystyle \qquad \qquad \quad 
+\int_{\mathbb R^N}R\pare{x, y}\pare{X(x)b(t,y)-\pare{1-X(y)}a(t,x)}\, dy \qquad & \, t>0,\, x\in \Omega , \\[10pt]
\displaystyle \frac{\partial b}{\partial t}\pare{t,x}=\int_{\mathbb R^N} G\pare{x, y}\cor{\pare{1-X(x)}b\pare{t,y}-\pare{1-X(y)}b\pare{t,x}}dy\\[10pt]
\qquad\qquad+\displaystyle\int_{\mathbb R^N} R(x,y)\cor{\pare{1-X(x)}a(t,y)-X(y)b(t,x)}dy\, dy
\qquad &  \, t>0,\, x\in \Omega ,\\[10pt]
a\pare{t,x}=b(t,x)=0,\quad & \, t\geq 0, \, x\in\overline\Omega^c,\\[10pt]
a\pare{0,x}=X\pare{x}u_0\pare{x}, \quad b\pare{0,x}=\pare{1-X(x)}u_0\pare{x}
\qquad & x\in \Omega.
\end{array} \right.
\end{equation}

Moreover, it holds that
the sequence of processes converges in distribution 
\begin{align}\label{convd2}
\pare{Z_n\pare{t}, I_n\pare{t}}\xrightarrow[n\to +\infty]{D}\pare{Z\pare{t}, I\pare{t}}
\end{align}
in $D\pare{[0, T], \mathbb R^N}\times D\pare{[0, T], \brc{1,2}}$, where the distribution of the limit $\pare{Z\pare{t}, I\pare{t}}$ is characterized by 
having as probability densities $a(t,x)$ and $b(t,x)$, that is,
\begin{align}
P\Big( Z\pare{t}\in  E, I(t) =1 \Big) = \int_{E} a(t,z) \, dz \quad \mbox{and} \quad 
P \Big( Z\pare{t}\in  E, I(t) =2 \Big) = \int_{E} b(t,z) \, dz ,
\end{align}
for every measurable set $ E\subseteq \mathbb R^N$.
\end{theorem}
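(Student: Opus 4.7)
The plan is to mimic the Neumann argument of Section \ref{sect-u0L2} step by step, keeping careful track of the fact that integrations now run over $\R^N$ while the unknown $u_n$ is supported in $\overline\Omega$. First I would prove a uniform $L^2$ bound on $u_n$ by multiplying \eqref{evol.intro2} by $u_n$ and integrating over $\Omega$ (using $u_n\equiv 0$ on $\Omega^c$); as in Lemma \ref{lema-3-2-}, the bilinear form $-\langle L_n u_n,u_n\rangle$ splits into a nonnegative ``symmetric'' part (quadratic differences weighted by the kernels on $\Omega\times\Omega$) plus an absorption part
\begin{align*}
\frac12 \int_\Omega \Big( \chi_{A_n}(x)\!\!\int_{\Omega^c}\!\!(\chi_{A_n}(y)J + \chi_{B_n}(y)R)(x,y)\,dy + \chi_{B_n}(x)\!\!\int_{\Omega^c}\!\!(\chi_{B_n}(y)G + \chi_{A_n}(y)R)(x,y)\,dy\Big) u_n^2(t,x)\,dx,
\end{align*}
which is still $\geq 0$. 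Hence $\|u_n(t,\cdot)\|_{L^2(\Omega)}$ is nonincreasing, so along a subsequence $u_n\rightharpoonup u$, $a_n:=\chi_{A_n}u_n\rightharpoonup a$, $b_n:=\chi_{B_n}u_n\rightharpoonup b$ weakly in $L^2((0,T)\times\R^N)$, with $u=a+b$ and $a,b$ supported in $\overline\Omega$.

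Next I would pass to the limit in the weak formulation exactly as in the proof of Theorem \ref{teo.main.intro}. Testing \eqref{evol.intro2} against $\chi_{B_n}(x)\phi(t,x)$ with $\phi(T,\cdot)\equiv 0$ produces four double integrals of the form $\int\!\!\int K(x,y)\chi_{\bullet_n}(x)\chi_{\bullet_n}(y) u_n(t,\cdot)\phi(t,x)\,dy\,dx$ with $K\in\{J,G,R\}$. The crucial device is that each such term has its $\chi$-factors living in \emph{different} variables, so the continuity of the kernels yields
\begin{equation*}
h_n(y)=\int_{\R^N} K(x,y)\chi_{\bullet_n}(x)\phi(t,x)\,dx \longrightarrow \int_{\R^N} K(x,y)\,\Xi(x)\phi(t,x)\,dx
\end{equation*}
uniformly in $y$, where $\Xi\in\{X,1-X\}$ is the weak-$*$ limit; combined with the weak $L^2$-convergence of the factor depending on $y$, each term passes to the limit. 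The analogous computation with $\chi_{A_n}(x)\phi(t,x)$ gives the equation for $a$. This yields that $(a,b)$ solves \eqref{sys11.intro}.

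For uniqueness of \eqref{sys11.intro} I would reproduce the fixed point argument of Lemma \ref{lemma:1}: define the map $\Phi:(a,b)\mapsto$ right-hand side of the integrated system with initial data $(Xu_0,(1-X)u_0)$, and show that on $L^2([0,t_0]\times\R^N)^2$ equipped with the norm $\|\cdot\|_{2,\infty}$ it is a contraction for $t_0$ small (depending only on $\|J\|_\infty,\|G\|_\infty,\|R\|_\infty,|\Omega|$), then iterate on $[0,T]$. This uniqueness promotes the subsequential weak convergence to convergence of the full sequence.

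For the convergence of processes I would follow Section \ref{Sect.proc}: first check that $u_n$ is the density of $Z_n$ via the dual uniqueness statement of Lemma \ref{lemma:uniqweak} (with the state space $\overline\Omega$ and the understanding that mass is lost to $\Omega^c$), then prove tightness of the laws of $(Z_n,I_n)$ in $D([0,T],\overline\Omega)\times D([0,T],\{1,2\})$ by Aldous-type bounds on the martingale \eqref{f1} applied to $g(x,i)=x$ and $h(x,i)=i$; since $\overline\Omega$ is compact and $\|\mathcal L_n g\|_\infty,\|\mathcal L_n h\|_\infty$ are bounded uniformly in $n$ (the extra integration over $\Omega^c$ only adds bounded terms), tightness follows verbatim from Lemma \ref{lemma1}. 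Characterizing every limit point by the martingale problem associated with the limit generator
\begin{equation*}
\widetilde{\mathcal L}f(x,i)=\chi_1(i)\!\!\int_{\R^N}\!\!\Big\{X(y)J(x,y)(f(y,1)-f(x,1))+(1-X(y))R(x,y)(f(y,2)-f(x,1))\Big\}dy+(i\leftrightarrow j\text{ analogue})
\end{equation*}
reduces to showing $\lim_n \E^{P_n}\int_{t_0}^t|\mathcal L_n f-\widetilde{\mathcal L}f|(Z_n(s),I_n(s))\,ds=0$, which is proved termwise exactly as in \eqref{fourp}: the uniform-in-$x$ convergence $\sup_x|\int_{\R^N}(\chi_{\bullet_n}(y)-\Xi(y))K(x,y)f(y,\cdot)\,dy|\to 0$ combined with the weak convergence of $a_n,b_n$ gives the vanishing. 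Uniqueness for the associated measure-valued equation (the Dirichlet analogue of Lemma \ref{lemma:weaksys}, proved by the same total-variation Gronwall estimate) finally identifies the marginals of $(Z,I)$ with $a(t,x)\,dx$ and $b(t,x)\,dx$.

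The main obstacle will be making sure that the killing at $\partial\Omega$ does not spoil any of the continuity/weak-convergence arguments: concretely, the term $\chi_{B_n}(y)R(x,y)u_n(t,y)$ in the integrand with $y\in\Omega^c$ vanishes because $u_n$ is supported in $\overline\Omega$, while the pure $\chi_{A_n}(x)\int_{\Omega^c}\chi_{B_n}(y)R(x,y)u_n(t,x)dy$ contribution produces, in the limit, $\int_{\Omega^c}(1-X(y))R(x,y)a(t,x)dy$, which is precisely the term in \eqref{sys11.intro} coming from integrating over $\R^N$ rather than $\Omega$. Verifying that every limit passage is of this benign form (i.e.\ that no mass ``leaks back'' from $\Omega^c$) is the bookkeeping that must be done with care, but involves no new ideas beyond those of Sections \ref{sect-u0L2}--\ref{Sect.proc}.
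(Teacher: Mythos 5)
Your proposal is correct and takes essentially the same route as the paper, which proves this theorem simply by carrying over the Neumann arguments of Sections \ref{sect-u0L2}--\ref{Sect.proc}, relying on exactly the two facts you isolate (the uniform-in-one-variable convergence of $\int (\chi_{A_n}-X)V$ against the continuous kernels $V\in\{J,G,R\}$, now over $\R^N$, and the fact that $u_n\equiv 0$ on $\overline\Omega^c$ forces $a=b=0$ there). The only slip is cosmetic: the absorption term in your energy identity should carry a factor $1$ rather than $\tfrac12$ in front of $\int_{\Omega^c}$, but since it is nonnegative either way the monotonicity of $\|u_n(t,\cdot)\|_{L^2(\Omega)}$ and everything downstream is unaffected.
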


The proof of the previous theorem follows exactly the same strategy that we used to prove Theorem \ref{teo.main.intro} as it still holds that
\begin{align}
\sup_{y\in\mathbb R^N}\abs{\int_{\Omega}V(x, y)\chi_{A_n}(x)\phi(t,x)dx-\int_{\Omega}V(x, y)X(x)\phi(t,x)dx}\xrightarrow[n\to\infty]{}0,
\end{align}
and 
\begin{align}
\sup_{x\in\overline\Omega}\abs{\int_{\mathbb R^N}\brc{\chi_{A_n}(y)-X(y)}V(x, y)dy}\xrightarrow[n\to\infty]{}0,
\end{align}
for every $V\in \brc{J, G, R}$ and $\phi:\mathbb R^N\times [0, T]\to\mathbb R$ smooth.

The fact that the limits verify the Dirichlet condition
$$a(t,x)=b(t,x)=0$$ for all $x\in \overline\Omega^c$ and $t\geq 0$ is a consequence of the second condition in \eqref{evol.intro2}.
\end{section}

\section*{Acknowledgments}

J.C.N. supported by CAPES - INCTmat grant 465591/2014-0 (Brazil). M.C.P. partially supported by CNPq grant 303253/2017-7 (Brazil). M.C. and J.D.R. partially supported by CONICET grant PIP GI No 11220150100036CO
(Argentina), PICT-2018-03183 (Argentina) and UBACyT grant 20020160100155BA (Argentina).

	\noindent\textbf{Addresses:}

{Monia Capanna and Julio D. Rossi
\hfill\break\indent
CONICET and Departamento  de Matem{\'a}tica, FCEyN,\hfill\break\indent 
Universidad de Buenos Aires, 
\hfill\break\indent  Ciudad Universitaria, Pabellon I, (1428).
Buenos Aires, Argentina.}
\hfill\break\indent
{{\tt moniacapanna@gmail.com, jrossi@dm.uba.ar}}

{Jean C. Nakasato 
	\hfill\break\indent Dpto. de Matem{\'a}tica, ICMC,
	Universidade de S\~ao Paulo, \hfill\break\indent Avenida Trabalhador S\~ao-Carlense, 400, S\~ao Carlos - SP, Brazil } \hfill\break\indent {{\tt nakasato@ime.usp.br} \hfill\break\indent {\it
		Web page: }{\tt www.ime.usp.br/$\sim$nakasato}}
	
{Marcone C. Pereira
		\hfill\break\indent Dpto. de Matem{\'a}tica Aplicada, IME,
		Universidade de S\~ao Paulo, \hfill\break\indent Rua do Mat\~ao 1010, 
		S\~ao Paulo - SP, Brazil. } \hfill\break\indent {{\tt marcone@ime.usp.br} \hfill\break\indent {\it
			Web page: }{\tt  www.ime.usp.br/$\sim$marcone}}

\end{document}